\newcommand{\BOX}{\ensuremath\Box}
\newtheorem{theorem}{Theorem}
\newtheorem{proposition}{Proposition}
\newtheorem{lemma}[proposition]{Lemma}
\newtheorem{corollary}[proposition]{Corollary}
\theoremstyle{remark}
\newtheorem{remark}[proposition]{Remark}
\theoremstyle{definition}
\newtheorem{definition}[proposition]{Definition}
\DeclareMathOperator{\supp}{supp}
\newcommand{\N}{\mathbb{N}}
\newcommand{\R}{\mathbb{R}}
\newcommand{\Z}{\mathbb{Z}}
\newcommand{\ep}{\varepsilon}
\newcommand{\intbar}{{- \hspace{- 1.05 em}} \int}
\definecolor{darkgreen}{rgb}{0,0.5,0}
\definecolor{darkblue}{rgb}{0,0,0.7}
\definecolor{darkred}{rgb}{0.9,0.1,0.1}
\definecolor{lightblue}{rgb}{0,0.51,1}
\noindent\textbf{Proof of {#1}:}}%
\noindent\textbf{Proof of Theorem \protect\ref{#1}:}}%
\noindent\textbf{Proof of Lemma~\protect\ref{#1}:}}%
\noindent\textbf{Proof of Proposition~\protect\ref{#1}:}}%
\noindent\textbf{Proof of Theorems \protect\ref{#1} --
\protect\ref{#2}:}}%
\begin{document}

\title[Concentration for blow-up solutions]{Localized smoothing for the Navier-Stokes equations and concentration of critical norms near singularities}

\author[T. Barker]{Tobias Barker}
\address[T. Barker]{DMA, \'{E}cole Normale Sup\'erieure, CNRS, PSL Research University, 75\, 005 Paris}
\email{tobiasbarker5@gmail.com}

\author[C. Prange]{Christophe Prange}
\address[C. Prange]{Universit\'e de Bordeaux, CNRS, UMR [5251], IMB, Bordeaux, France}
\email{christophe.prange@math.u-bordeaux.fr}

\keywords{}
\subjclass[2010]{}
\date{\today}

\maketitle

\noindent {\bf Abstract} This paper is concerned with two dual aspects of the regularity question of the Navier-Stokes equations. First, we prove a local in time localized smoothing effect for local energy solutions. More precisely, if the initial data restricted to the unit ball belongs to the scale-critical space $L^3$, then the solution is locally smooth in space for some short time, which is quantified. 
This builds upon the work of Jia and \v{S}ver\'{a}k, who considered the subcritical case. Second, we apply these localized smoothing estimates to prove a concentration phenomenon near a possible Type I blow-up. Namely, we show if $(0, T^*)$ is a singular point then $$\|u(\cdot,t)\|_{L^{3}(B_{R}(0))}\geq \gamma_{univ},\qquad R=O(\sqrt{T^*-t}).$$ This result is inspired by and improves concentration results established by Li, Ozawa, and Wang and  Maekawa,  Miura, and Prange. We also extend our results to other critical spaces, namely $L^{3,\infty}$ and the Besov space $\dot{B}^{-1+\frac3p}_{p,\infty}$, $p\in(3,\infty)$.

\vspace{0.3cm}

\noindent {\bf Keywords}\, Navier-Stokes equations, Leray-Hopf solutions, local energy solutions, singularities, Type I blow-up, concentration, $\ep$-regularity, critical norms, Morrey spaces.

\vspace{0.3cm}

\noindent {\bf Mathematics Subject Classification (2010)}\, 35A99, 35B44, 35B65, 35D30, 35Q30, 76D05

\section{Introduction}

{This paper concerns weak Leray-Hopf solutions of the Navier-Stokes system \begin{align}\label{e.nse}
\partial_t u + u \cdot \nabla u - \Delta u + \nabla p =0,\,\,\,\, u(x,0)=u_{0}(x),\,\,\,\, \nabla\cdot u =0, \quad  x\in \R^3, \  t>0.
\end{align}
In particular these solutions satisfy
\begin{equation}\label{globalenergyinequality}
\|u(\cdot,t)\|_{L^{2}(\mathbb{R}^3)}^2+2\int\limits_{0}^{t}\int\limits_{\mathbb{R}^3} |\nabla u(x,s)|^2 dxds\leq \|u_{0}\|_{L^{2}(\mathbb{R}^3)}^{2}
\end{equation}
along with other properties.} 

{Though these solutions were shown to exist for any divergence-free initial data in $L^{2}(\mathbb{R}^3)$ \cite{Leray}, it is unknown if they are smooth for all positive times. Uniqueness is also still open, although non-uniqueness  scenarios were suggested by \cite{JiaSvernonunique} with supporting numerical evidence in \cite{guillod2017numerical}. For weaker notions of solutions with bounded kinetic energy, uniqueness can fail as demonstrated by Buckmaster and Vicol in \cite{buckmaster2017nonuniqueness}. In investigating the regularity of such solutions, it is natural to ask the following question:
\begin{itemize}
\item[] \textbf{(Q)} What type of initial conditions induce smoothing of the associated solutions of the Navier-Stokes equations and can this be described quantatively?
\end{itemize}}
The list of contributions to \textbf{(Q)} is vast and we do not attempt to be exhaustive. The first contribution to \textbf{(Q)} was provided by Leray in \cite{Leray} for $u_0\in L^{p}(\mathbb{R}^3)$ with $p>3$. By using perturbation methods, further contributions to \textbf{(Q)} were made by Kato \cite{Kato} for $u_0\in L^{3}(\mathbb{R}^3)$, by Planchon \cite{Plan96} for $u_0$ in critical Besov spaces, by Koch and Tataru \cite{KochTataru} for $u_0\in BMO^{-1}(\mathbb{R}^3)$, by Maekawa and  Terasawa \cite{maekawa2006} for $u_0\in L^{3}_{uloc}(\mathbb{R}^3)$ and  by Maekawa, Miura and Prange \cite{MMP17a} for $u_0\in L^{3}_{uloc}(\mathbb{R}^3_{+})$. Here 
\begin{align}
L^{p}_{uloc}(\mathbb{R}^3):=\{u_{0}\in L^{p}_{loc}(\mathbb{R}^3):\,\,\,\,\|u_{0}\|_{L^{p}_{uloc}(\mathbb{R}^3)}:= \sup_{x_0\in\mathbb{R}^3} \|u_{0}\|_{L^{p}(B(x_{0}, 1)}<\infty\}.
\end{align}
Recently in \cite{JS14}, Jia and \v{S}ver\'{a}k made an interesting contribution to \textbf{(Q)} when $u_0\in L^{2}_{uloc}$ and $u_0\in L^{m}(B(0,2))$ with $m>3$.
Their result and our first theorem below are in the context of local energy solutions, which were introduced by Lemari\'{e}-Rieusset (see Chapters 32-33 in \cite{LR02}, see also  \cite{KS07}). The definition of local energy solutions is given in Appendix \ref{app.a}. 
Our foremost result is the following theorem.

\begin{theorem}\label{theo.main}
For all $M\in(0,\infty)$, there exists $S^*(M)\in(0,\frac14]$ and an independent universal constant $\gamma_{univ}$ such that the following holds true. Consider any local energy solution $u$, in the sense of Definition \ref{def.lews},  to the Navier-Stokes equations \eqref{e.nse} with initial data $u_0\in L^2_{uloc,\sigma}(\R^3)$ satisfying 
\begin{equation*}
\|u_0\|_{L^2_{uloc}(\R^3)}\leq M\quad\mbox{and}\quad \sup_{|\bar x|\geq R}\|u_0\|_{L^2(B_1(\bar x))}\stackrel{R\rightarrow\infty}{\longrightarrow} 0,
\end{equation*}
\begin{equation*}
u_{0}\in L^{3}(B_{2}(0))\,\,\,\,\,\,\mbox{and}\,\,\,\,\,\,\|u_0\|_{L^3(B_2(0))}\leq \gamma_{univ}.
\end{equation*}
Then the above assumptions imply that
\begin{equation*}
u\in L^\infty(B_\frac13(0)\times(\beta,S^*(M))),
\end{equation*}
for all $\beta\in (0,S^*(M))$.
\end{theorem}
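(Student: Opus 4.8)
The plan is to split the data into a scale-critical piece supported near $B_2(0)$, which is small in $L^3(\R^3)$ and hence generates a globally smooth mild solution by perturbation theory, and a remainder that vanishes identically near the origin; the remainder is then analysed as a local energy solution of a perturbed Navier--Stokes system to which a Caffarelli--Kohn--Nirenberg $\ep$-regularity scheme is applied. \emph{Data splitting.} Fix $\chi\in C_c^\infty(B_2(0))$ with $0\le\chi\le1$ and $\chi\equiv1$ on $B_{3/2}(0)$, and set $v_0:=\chi u_0-\mathbb B(u_0\cdot\nabla\chi)$, where $\mathbb B$ is a Bogovskii operator on the annulus $B_2(0)\setminus B_{3/2}(0)$ (the compatibility condition $\int u_0\cdot\nabla\chi=0$ holds since $\nabla\cdot u_0=0$ and $\supp\chi\Subset B_2(0)$). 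Then $v_0$ is divergence free, supported in $B_2(0)$, agrees with $u_0$ on $B_{3/2}(0)$, and by Calder\'on--Zygmund and Bogovskii estimates $\|v_0\|_{L^3(\R^3)}\le C_0\|u_0\|_{L^3(B_2(0))}\le C_0\gamma_{univ}$. Put $w_0:=u_0-v_0$; it is divergence free, vanishes on $B_{3/2}(0)$, lies in $L^2_{uloc,\sigma}(\R^3)$ with $\|w_0\|_{L^2_{uloc}}\le C(M)$ (using $L^3(\R^3)\hookrightarrow L^2_{uloc}(\R^3)$), and inherits the decay $\sup_{|\bar x|\ge R}\|w_0\|_{L^2(B_1(\bar x))}\to0$.

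\emph{The critical piece and the perturbed remainder.} Provided $\gamma_{univ}$ is universally small (how small is fixed at the end), Kato's $L^3$ mild solution theory yields a global solution $v$ with $v(\cdot,0)=v_0$, smooth on $\R^3\times(0,\infty)$, with
\[
\sup_{t>0}\|v(\cdot,t)\|_{L^3(\R^3)}+\|v\|_{L^5(\R^3\times(0,\infty))}+\sup_{t>0}t^{1/2}\|v(\cdot,t)\|_{L^\infty(\R^3)}\le C_1\|v_0\|_{L^3(\R^3)}\le C_1C_0\gamma_{univ}.
\]
In particular $v$ is a suitable weak solution, bounded on $\R^3\times[\beta,1]$ for each $\beta>0$, and as small as we wish in the scale-invariant norms $L^5_{x,t}$ and $L^\infty_tL^3_x$. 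Set $w:=u-v$, with associated pressure $q:=p_u-p_v$. Since $v$ is smooth with the above decay, $w$ solves the perturbed system
\[
\partial_t w-\Delta w+w\cdot\nabla w+v\cdot\nabla w+w\cdot\nabla v+\nabla q=0,\qquad\nabla\cdot w=0,\qquad w|_{t=0}=w_0,
\]
and, subtracting the energy \emph{equality} for $v$ and the distributional equation for the cross term $u\cdot v$ from the local energy \emph{inequality} for $u$, one checks that $w$ satisfies a local energy inequality for this perturbed system, with $\|w\|_{L^\infty_tL^2_{uloc}\cap L^2_t\dot H^1_{uloc}(\R^3\times(0,1))}\le C(M)$ and with $q$ admitting the usual local/nonlocal splitting of the local-energy-solutions theory, the nonlocal part being smooth with $C(M)$-controlled low norms on $B_1(0)$.

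\emph{Localized smoothing for $w$.} This is the heart of the proof. The two structural inputs are: (i) $w_0\equiv0$ on $B_{3/2}(0)$, so that in the perturbed local energy inequality localized in $B_r(x_0)$ with $x_0\in B_{1/2}(0)$, $r<1$, the initial-data term \emph{vanishes identically} (in particular at $t=0$, i.e.\ on cylinders $Q_r(z_0)=B_r(x_0)\times(t_0-r^2,t_0)$ reaching the initial time); and (ii) $v$ is arbitrarily small in $L^5_{x,t}$. One runs a Caffarelli--Kohn--Nirenberg-type dyadic iteration on the scale-invariant quantities
\[
A(z_0,r)=\tfrac1r\!\sup_{t_0-r^2<t<t_0}\!\int_{B_r(x_0)}\!|w|^2,\quad E(z_0,r)=\tfrac1r\!\iint_{Q_r(z_0)}\!|\nabla w|^2,\quad C_3(z_0,r)=\tfrac1{r^2}\!\iint_{Q_r(z_0)}\!|w|^3,
\]
together with the pressure quantity, over all $z_0=(x_0,t_0)$ with $x_0\in B_{1/2}(0)$, $0<t_0<S^*(M)$, and $0<r\le c\min(\sqrt{t_0},1)$. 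The local energy inequality produces a recursion across dyadic scales in which every source term is either (a) the vanishing initial-data term, (b) a term carrying a factor of $\|v\|_{L^5}$, hence an arbitrarily small constant, or (c) a term that decays as a positive power of $S^*(M)$ and is absorbed by choosing $S^*(M)$ small depending on $M$. Iterating upgrades the naive $(S^*)^{1/4}$ decay of the annular cubic contribution (which by itself only comes from the subcritical embedding $L^\infty_tL^2\cap L^2_t\dot H^1\hookrightarrow L^{10/3}_{x,t}$ and loses a power of the scale) to enough decay to beat the $r^{-1}$ loss at small scales, yielding $A+E+C_3+(\text{pressure})<\ep_*$ on the whole admissible range, where $\ep_*$ is the CKN threshold. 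Consequently $u=v+w$ satisfies the $\ep$-regularity smallness at every point of $B_{1/2}(0)\times(0,S^*(M))$; by the $\ep$-regularity criterion $u\in L^\infty_{loc}(B_{1/2}(0)\times(0,S^*(M)))$, and combining with $v\in L^\infty(\R^3\times[\beta,1])$ gives $u\in L^\infty(B_{1/3}(0)\times(\beta,S^*(M)))$ for every $\beta\in(0,S^*(M))$. Fixing $\gamma_{univ}$ universally small (so that the $v$-terms and the $a$-part of the $\ep$-regularity quantity are below a fraction of $\ep_*$) and then $S^*(M)\in(0,\tfrac14]$ small enough completes the proof.

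\emph{Main obstacle.} The delicate point is obtaining the recursion with \emph{scale-invariant, $M$-independent} constants in the regime $t_0\to0^+$: a single localized energy estimate controls the cubic error on the transition annulus only through the $L^{10/3}_{x,t}$ embedding and therefore loses a power of the scale, so one must either push through the full dyadic CKN induction sketched above or, alternatively, argue by compactness --- rescaling around a hypothetical bad sequence $z_n\to(x_*,0)$, extracting a limit that solves a small-perturbation Navier--Stokes system on $\R^3\times(-1,\infty)$ with \emph{zero} initial data at time $-1$, and invoking the attendant uniqueness/Liouville statement to force the limit to vanish, contradicting the normalization $A(z_n,r_n)+E(z_n,r_n)\ge\ep_*$. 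In the compactness route the subtle ingredients are the uniform local-energy and pressure bounds required for the extraction of a limit and the strong space-time convergence needed to pass to the limit in the nonlinearity.
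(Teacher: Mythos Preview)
Your overall architecture matches the paper exactly: split $u_0$ via cut-off plus Bogovskii into a small $L^3$ piece generating a global mild solution (the paper calls it $a$, you call it $v$) and a remainder vanishing on $B_{3/2}(0)$; write $u=a+v$ (your $v+w$); use that the perturbation has zero local data to obtain smallness of its local energy and cubic/pressure norms on $B_1(0)\times(0,S^*(M))$ for $S^*(M)$ small depending on $M$; then run a Caffarelli--Kohn--Nirenberg iteration to push this smallness to small scales. Two points deserve comment.

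\textbf{The output of the CKN iteration.} You state that the iteration yields $A+E+C_3+(\text{pressure})<\ep_*$ uniformly over the admissible range, i.e.\ a scale-invariant bound. The paper is explicit that this does \emph{not} close: because the drift $a$ is only in $L^5_{t,x}$, the quantity $\|a\|_{L^5(Q_r)}$ does not improve as $r\to0$, and the terms $I_3$, $I_4$ in the iteration would grow linearly in the number of dyadic steps if one tried to propagate a purely scale-invariant quantity. What the iteration actually produces is a \emph{subcritical} Morrey bound
\[
\frac{1}{r^{5-\delta}}\int_{Q_r(\bar x,t)}|w|^3\,dx\,ds\le C_*\ep_*^{2/3},\qquad 0<\delta<3\ \text{fixed},
\]
which then implies $C_3(z_0,r)\le C_*\ep_*^{2/3}r^{3-\delta}\to0$ as $r\to0$. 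This is a genuine technical point, not a cosmetic one: the iterated quantities in the paper are $r_k^{-2}\int|w|^3\le\ep_*^{2/3}r_k^{3-\delta}$ and a \emph{non-critical} pressure quantity $r_k^{-(1+\delta)/2}\int|q-(q)_{r_k}|^{3/2}$, and the $\delta>0$ is precisely what buys summability over the dyadic scales.

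\textbf{The endgame.} After the Morrey bound, you apply standard CKN $\ep$-regularity to $u$ itself, using $\|a\|_{L^5}$-smallness to control the $a$-contribution to the cubic and pressure quantities at scale $r\sim\sqrt{t_0}$. This works and suffices for Theorem~\ref{theo.main} as stated (boundedness on $(\beta,S^*)$). The paper takes a different and stronger route: it feeds the Morrey bound into a linear bootstrap (Theorem~\ref{localboundednesscriticalStokes}) on the perturbed Stokes system, obtaining $w\in L^\infty(B_{1/3}\times(0,S^*))$ and in fact $w\in C^{0,\nu}_{par}$ \emph{up to} $t=0$. Your route loses this sharpening but is adequate for the bare statement. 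As for your compactness alternative: the paper's introduction warns that compactness for the perturbed system is obstructed by the lack of H\"older continuity for the linearization with merely critical drift, though it notes that Kang--Miura--Tsai later made a compactness argument work by different means.
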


We also prove an extension of this theorem to: (i) the critical Lorentz-space case in Section \ref{app.b}, i.e. $u_0\in L^{3,\infty}(B_2(0))$, and (ii) to the critical Besov space case in Section \ref{app.c}, i.e. $u_0\in \dot{B}^{-1+\frac3p}_{p,\infty}(B_2(0))$, $p\in(3,\infty)$. The case of the Besov case in particular requires some technical innovations, which are outlined at the end of the paragraph \ref{subsec.novelty} below.

This result asserts that the regularity of local energy solutions is a somewhat local pro\-perty, near initial time. Indeed the solution $u$ is bounded in $B_\frac12(0)\times(0,S^*(M))$, hence smooth in space, if the initial data is locally in the scale critical space $L^3$.

Notice that the result we prove in Section \ref{sec.proof} is stronger. Indeed, considering the mild solution $a$ associated to an $L^{3}$ continuous divergence-free extension of the critical data $u_{0}|_{B_{1}(0)}$, we prove that\footnote{In this paper, the parabolic H\"older semi-norm is defined in the following way: $$[\cdot]_{C^{0,\nu}_{par}(\R^3\times [0,T])}:=[\cdot]_{C^{0,\nu}_{t}([0,T];L^\infty_x(\R^3))}+[\cdot]_{L^\infty_t(0,T;C^{0,2\nu}_{x}(\R^3))}.$$}
\begin{equation}\label{e.holdereu-a}
u-a\in C^{0,\nu}_{par}(\bar{B}_\frac13(0)\times[0,S^*(M)]),
\end{equation}
for some $\nu\in (0,\frac12)$. 
We refer to Theorem \ref{localboundednesscriticalStokes} in Section \ref{sec.boundedness} 
and to Section \ref{sec.proof} for the details, in particular regarding the decomposition of the initial data. This improved regularity for $u-a$ relies heavily on the fact that $u-a$ has zero initial data in $B_1(0)$. 

\begin{subsection}{Comparison of Theorem \ref{theo.main} to previous literature and novelty of our results}\label{subsec.novelty}
Our theorem can be seen as an extension to the scale-critical case of the pioneering result of Jia and \v{S}ver\'{a}k \cite[Theorem 3.1]{JS14} for subcritical $u_0\in L^m(B(0,2))$, $m>3$. The main part of their proof relies upon establishing an an $\ep$-regularity criteria for suitable solutions (see Section \ref{sec.morrey} Definition \ref{def.lsws} for a definition of suitable solutions) of the perturbed Navier-Stokes equations with subcritical $a$, i.e. $a\in L^{\infty}_{t}L^{m}_{x}(B_{1}\times (-1,0))$, $m>3$ and $\nabla\cdot a=0$:
\begin{equation}\label{pertubedequation}
\partial_{t}v-\Delta v+v\cdot\nabla v+ a\cdot\nabla v+v\cdot\nabla a+ \nabla q=0,\,\,\,\,\nabla\cdot v=0\,\,\,\,\,\textrm{in}\,\,\, B_{1}(0)\times (-1,0).
\end{equation}
In particular, they show that if certain scale-critical quantities involving $v$ and $q$ on the unit cube $B_{1}(0) \times (-1,0)$ are small then one has decay of the oscillation:
$$
\frac{1}{r^{5}}\int\limits_{t-r^2}^{t_0}\int\limits_{B_{r}(\bar x)} \Big|v-\int\limits_{t-r^2}^{t}\int\limits_{B_{r}(\bar x)} v dyds\Big|^3 dxds'\leq Cr^{\alpha}$$
for all $(\bar x,t)\in B_\frac12\times(-\frac14,0)$ and for some $\alpha>0$. This implies parabolic H\"{o}lder continuity of $v$ by Campanato's characterisation.
The proof of the decay of the oscillation in Jia and \v{S}ver\'{a}k's paper \cite{JS14}  is achieved by contradiction and by compactness arguments. Related arguments were previously used in the context of the Navier-Stokes equations  by Lin in \cite{Lin} and by Ladyzhenskaya and Seregin in \cite{LadySer}. Such arguments applied to the system \eqref{pertubedequation} crucially use that for $a$ in subcritical spaces, 
we have  parabolic H\"{o}lder continuity in $B_{\frac{1}{2}}(0)\times (-\frac{1}{4},0)$ for the following linear system: 
\begin{equation}\label{linearizedperturbed}
\partial_{t} w-\Delta w+ a\cdot\nabla w+w\cdot\nabla a+\nabla q= \nabla\cdot G,\,\,\,\,\,\,\,\nabla\cdot a=0,\,\,\,\,\,\,\,\nabla\cdot w=0\,\,\,\,\,\textrm{in}\,\,\,\,\,B_{1}(0)\times (-1,0),
\end{equation}
for a subcritical forcing term $G\in L^\frac{5m}3(B_{1}(0) \times (-1,0))$. 
Unfortunately, when $u_0$ is critical and hence $a,\, G$ belong to scale invariant spaces with respect to the Navier-Stokes scaling\footnote{The Navier-Stokes equations are invariant under the scaling $(u^{(\lambda)}(x,t), p^{(\lambda)}(x,t))= (\lambda u(\lambda x, \lambda^2 t), \lambda^2 p(\lambda x, \lambda^2 t))$, $u_{0}^{(\lambda)}(x)= \lambda u_{0}(\lambda x)$. We say that a space $X\subset \mathcal{S}^{'}(\mathbb{R}^3)$ is critical (or scale-invariant) if its norm is invariant under the above rescaling for the initial data. Likewise, we say $X_{T}\subset \mathcal{S}^{'}(\mathbb{R}^3)$ is critical is its norm is invariant under the rescaling for the velocity field.} 
such as $L^{5}(B_{1}(0)\times (-1,0))$, we do not expect solutions of \eqref{linearizedperturbed} to be H\"{o}lder continuous. Concerning this point, let us emphasize that the regularity result \eqref{e.holdereu-a} uses in an essential way that $u-a$ has zero initial data locally in $B_1(0)$. This lack of improvement for the perturbed linear system \eqref{linearizedperturbed} seems to prevent us from relying on compactness arguments to directly prove the boundedness. Indeed, such compactness arguments are based on H\"older continuity for the linear system. Difficulties with using compactness arguments  are also found when proving $\ep$- regularity statements for the Navier-Stokes equations in higher dimensions (see \cite{dong2014partial}-\cite{dong2014boundary}). This is the main difficulty we have to overcome to prove Theorem~\ref{theo.main}. We handle this difficulty by proving a subcritical Morrey bound thanks to a Caffarelli, Kohn and Nirenberg-type scheme. This point is explained in more details in the paragraph \ref{subsec.strat} below.
 
The extension of our results to the Besov case in Section \ref{app.c} relies on some ideas which are new as far as we know. In particular, in the Caffarelli, Kohn and Nirenberg-type iteration, we need to exploit the local decay of the kinetic energy near the initial time, because the critical drift is more singular in the Besov case than in the $L^3$ case. Such an insight was used before for \emph{global} estimates by Barker \cite{Barker18} to prove weak-strong uniqueness, in Barker, Seregin and \v{S}ver\'{a}k's paper \cite{BSS18} on global $L^{3,\infty}$ solutions and by Albritton and Barker \cite{AlbrittonBarkerBesov2018} for global Besov solutions. However, to the best of our knowledge, it is the first time that the decay of the kinetic energy near initial time is used in \emph{local} estimates, such as a Caffarelli, Kohn and Nirenberg-type iteration. We believe this point is of independent interest.

\subsection{Strategy of Proof of Theorem \ref{theo.main}}
\label{subsec.strat}
 
As is the case in Jia and \v{S}ver\'{a}k's paper, the key point is to take advantage of the smallness of the local energy of the perturbation $v$ in the unit ball near initial time, i.e. in $B_1(0)\times(0,S^*(M))$. There are then two main blocks in the proof. 

First we prove a subcritical Morrey bound on the perturbation. Smallness of the local energy together with the smallness of $\|a\|_{L^5_{t,x}}$ enables to prove a subcritical Morrey bound on $v$: for $\delta\in(0,3)$ fixed, for $(\bar x,t)\in B_\frac12(0)\times(0,S^*(M))$,
\begin{equation}\label{e.submorrey}
\sup_{r\in(0,\frac12)}\frac1{r^{5-\delta}}\int\limits_{t-r^2}^t\int\limits_{B_r(\bar x)}|v|^3dxds<\infty,
\end{equation}
with $v$ extended by $0$ in negative times. The precise statement is given in Theorem \ref{theo.morrey} in Section \ref{sec.morrey}. 
Estimate \eqref{e.submorrey} is based on a Caffarelli, Kohn, Nirenberg type iteration. The proof requires some technical innovations, in particular concerning the treatment of the pressure and of the perturbation terms $a\cdot\nabla v$ and $v\cdot\nabla a$. A major difficulty is that the decay of $\|a\|_{L^5(B_r(\bar x)\times(t-r^2,t))}$ does not improve when $r\rightarrow 0$. A careful study of the proof in Section \ref{sec.morrey} shows that the $L^5$ norm for $a$ is the critical threshold for iteration to work. Exploiting that $a$ is a solution to the Navier-Stokes equations and the bound \eqref{e.submorrey}, one could directly apply $\ep$-regularity away from the initial time to get smoothness of the perturbation $v$. Instead, we aim at obtaining the boundedness of $v$ up to the initial time. 

Obtaining the boundedness of $u-a$ and eventually H\"older continuity in the parabolic metric up to initial time is the second main block of the paper. It goes through the use of the Morrey bound \eqref{e.submorrey} to control the nonlinear term in \eqref{pertubedequation} and a bootstrap on the linear equation to get the boundedness. Related arguments were used by Seregin in \cite{Ser06}. This work is done in Section \ref{sec.boundedness}.

Let us point out that for the subcritical case $u_0\in L^{m}(B(0,2))$ (which corresponds to $a$ belonging to subcritical spaces), Jia and \v{S}ver\'{a}k prove in \cite{JS14} that the perturbation $v= u-a$ is H\"{o}lder continuous in the parabolic metric up to the initial time. Moreover, in \cite{JS14} the H\"{o}lder exponent degenerates as $m$ approaches the critical case $m=3$. Perhaps at first sight it appears somewhat unexpected that one still obtains H\"{o}lder continuity of $v$ up to the initial time, for the critical initial data case. Our proof for showing this relies upon the structure of estimates for the mild solution $a$, in particular $\sup_{s\in (0,S^*(M))}s^\frac15\|a(\cdot,s)\|_{L^5}\ll 1$ and the fact that $v$ has zero initial data. Such points allow us to obtain a decay of the $L^{\infty}$ norm of $v$ near the initial time, which is key from going from $v$ being bounded to H\"{o}lder continuous.

\end{subsection}

\begin{subsection}{ Concentration of norms centered on singularities}
 In the the second part of the paper, we apply the results of Theorem 1 to obtain certain new concentration results for weak Leray-Hopf solutions which first develop singular points at time $T^{*}>0$. We say that $(\bar x,t)\in\R^3\times(0,\infty)$ is a regular point of $u$, if there exists $r\in(0,\infty)$ such that $u\in L^\infty(B_r(\bar x)\times (t-r^2,t))$. A contrario, a point $(\bar x,t)\in\R^3\times(0,\infty)$ is a singular point, or a blow-up point if it is not regular. A time $T^{*}\in (0,\infty)$ is called a blow-up time if there exists $\bar x\in\R^3$ such that $(\bar x,T^*)$ is a singular point. For $(\bar x,t)\in \R^3\times\R$, we define the parabolic cylinder
\begin{equation*}
Q_r(\bar x,t):=B_r(x)\times(t-r^2,t).
\end{equation*}
 
Investigation of singular weak Leray-Hopf solutions was first performed by Leray in \cite{Leray}. In particular, Leray showed that if a weak Leray-Hopf solution $v$ first develops singularities at $T^{*}$ then
 \begin{equation}\label{Leraynecessary}
 \|v(\cdot,t)\|_{L^{p}(\mathbb{R}^3)}\geq \frac{C(p)}{(T^{*}-t)^{\frac{1}{2}(1-\frac{3}{p})}}\,\,\,\,\textrm{for}\,\,\,\,3<p\leq\infty\,\,\,\,\,\textrm{and}\,\,\,\,0<t<T^{*}.
 \end{equation}
 Behaviour of the $L^{3}$ norm is more subtle. In a breakthrough paper, Escauriaza, Seregin and Sver\'{a}k showed that if $(\bar x, T^{*})$ is a singular point then
 \begin{equation}\label{ESSnecessary}
 \lim\sup_{t\uparrow T^{*}} \|v(\cdot,t)\|_{L^{3}(B_{\delta}(\bar x))}=\infty\,\,\,\,\,\,\textrm{for\,\,\,any\,\,\,fixed}\,\,\,\delta>0.
 \end{equation}
See  \cite{phuc2015navier} and \cite{DW18} for local extensions, as well as \cite{GKP} and \cite{LiWang2018blowup} for global extensions. 
 Later in \cite{seregin2012certain}, Seregin improved (\ref{ESSnecessary}):
 \begin{equation}\label{L3tendtoinfinity}
 \lim_{t\uparrow T^{*}}\|v(\cdot, t)\|_{L^{3}(\mathbb{R}^3)}=\infty.
 \end{equation}
We also refer to \cite{BS17}, \cite{MMP17a}, \cite{Albritton18} and \cite{AlbrittonBarkerBesov2018} for extensions and refinements (we mention that \cite{BS17} and \cite{MMP17a} concern the half-space).   Recently in \cite{AlbrittonBarker2018local} Albritton  and Barker refined \eqref{ESSnecessary} and \eqref{L3tendtoinfinity} to show that if $\Omega$ is a bounded domain with $C^{2}$ boundary one has
 \begin{equation}\label{aalbrittonbarker}
 \lim_{t\uparrow T^{*}} \|v(\cdot,t)\|_{L^{3}(B_{\delta}(\bar x)\cap \Omega)}=\infty\,\,\,\,\,\,\textrm{for\,\,\,any\,\,\,fixed}\,\,\,\delta>0.
 \end{equation} 
 
In this paper we are interested in investigating accumulation behaviour of norms of $v$ near blow-up times $T^{*}$ on balls whose radius shrinks to zero as $t$ approaches $T^{*}$. We refer to this as \textit{`concentration of $v$'}. Such phenomenon was investigated for other equations e.g. nonlinear Schr\"odinger in the wake of the pioneering work \cite{MT90}, see \cite{HK06}, \cite{HR07}. In cite \cite{LOW18}, an interesting concentration result is proven for a weak Leray-Hopf solution $v$ which first blows-up at $T^{*}>0$. In particular, their results imply  that there exists $t_{n}\uparrow T^{*}$ and $x_{n}\in\mathbb{R}^3$ such that
 \begin{equation}\label{Liwangconcentration}
 \|v(\cdot, t_{n})\|_{L^{m}(B_{\sqrt{ C(m) (T^{*}-t_n)}}(x_{n}))}\geq \frac{C(m)}{({T^{*}-t_n})^{\frac{1}{2}(1-\frac{3}{m})}},\,\,\,\,\ 3\leq m\leq \infty.
 \end{equation}
We are not aware of any prior such results of this type for the Navier-Stokes equations.
 
By using a rescaling argument and an estimate of the existence time of mild solutions in terms of the size of the initial data in $L^m_{uloc}(\R^3)$, $m>3$, Maekawa Miura Prange improved (\ref{Liwangconcentration}). In particular, see \cite[Corollary 1.1]{MMP17a}, they showed that for every $t\in (0, T^*)$ (not just a sequence $t_{n}\uparrow T)$ there exists $x(t)\in\mathbb{R}^3$ such that
\begin{equation}\label{maekawamiuraprange}
\|v(\cdot, t)\|_{L^{p}(B_{\sqrt{ C(m) (T^*-t)}}(x(t)))}\geq \frac{C(m)}{({T^*-t})^{\frac{1}{2}(1-\frac{3}{m})}}\,\,\,\,\ 3\leq m\leq \infty.
\end{equation}
In \eqref{Liwangconcentration} or \eqref{maekawamiuraprange} no information is provided on $x_{n}$ and $x(t)$. It is natural to ask whether the concentration phenomenon occurs on balls $B(x,R)$ with $R=O(\sqrt{T^{*}-t})$ and with $(x,T^{*})$ being a singular point. Our second theorem answers this in the affirmative for the $L^{3}$ for Leray-Hopf solutions which first blow-up at time $T^{*}$ and which satisfy the Type I bound: \begin{align}
\begin{split}\label{e.typeI}
&\sup_{\bar x\in\R^3}\sup_{r\in(0,r_0)}\sup_{T^{*}-r^2<t<T^{*}}r^{-\frac12}\Big(\int\limits_{B_r(\bar x)}|u(x,t)|^2dx\Big)^\frac12\leq M,\\
&\qquad
\mbox{for a fixed radius}\ r_0\in (0,\infty]\ \mbox{and}\ M,\ T^{*}\in (0,\infty).
\end{split}
\end{align}
Let us now state our concentration result.

\begin{theorem}\label{theo.conc}
Let $\gamma_{univ}$, $M\in (0,\infty)$ and $S^*(M)$ given by Theorem \ref{theo.main}. Let $T^{*}\in(0,\infty)$ and $r_0\in (0,\infty]$ be fixed. There exists $t_*(T^*,M,r_0)\in[0,\infty)$ such that the following holds true. Let $u$ be a Leray-Hopf solution to \eqref{e.nse} in $\R^3\times(0,\infty)$ satisfying the type I bound \eqref{e.typeI}. Furthermore, suppose $u$ first blows-up at $T^{*}$ and has a singular point at the space-time point $(0,T^{*})$. Then the above assumptions imply that
\begin{equation}\label{e.conblup}
\|u(\cdot,t)\|_{L^3\big(|\cdot|\leq 2\sqrt{\frac{T^{*}-t}{S^*(M)}}\big)}>\gamma_{univ},
\end{equation}
for all $t\in (t_*(M,r_0),T^{*})$. 
\end{theorem}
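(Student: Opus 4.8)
The plan is to derive Theorem \ref{theo.conc} from Theorem \ref{theo.main} by a contradiction-and-rescaling argument, using the Type I bound to normalize the blow-up. Suppose, for contradiction, that at some time $t\in(0,T^{*})$ close to $T^{*}$ one has $\|u(\cdot,t)\|_{L^3(|\cdot|\le 2\sqrt{(T^{*}-t)/S^*(M)})}\le\gamma_{univ}$. Set $\lambda:=\sqrt{(T^{*}-t)/S^*(M)}$ and rescale via the Navier-Stokes scaling $u_\lambda(y,s):=\lambda u(\lambda y,t+\lambda^2 s)$, which is again a Leray-Hopf (indeed local energy) solution on $\R^3\times(0,\ldots)$ with initial data $u_\lambda(\cdot,0)=\lambda u(\lambda\cdot,t)=:v_0$. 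The key observations are: (i) the assumed smallness becomes exactly $\|v_0\|_{L^3(B_2(0))}=\|u(\cdot,t)\|_{L^3(B_{2\lambda}(0))}\le\gamma_{univ}$, matching the critical hypothesis of Theorem \ref{theo.main}; (ii) the Type I bound \eqref{e.typeI} is scale-invariant, so $v_0$ satisfies $\|v_0\|_{L^2_{uloc}(\R^3)}\le CM$ for a constant depending only on $M$ (here one uses that $t$ is within $r_0^2$ of $T^{*}$, and $\lambda\le r_0$, so that the Type I bound applies at every scale up to $1$ for $v_0$; this is where $t_*(T^*,M,r_0)$ enters — one needs $\lambda=\sqrt{(T^*-t)/S^*(M)}\le r_0$ and $T^*-t\le r_0^2$, i.e. $t>t_*:=\max(T^*-r_0^2,\ T^*-S^*(M)r_0^2)$, with $t_*=0$ when $r_0=\infty$); and (iii) the spatial-decay condition $\sup_{|\bar x|\ge R}\|v_0\|_{L^2(B_1(\bar x))}\to 0$ must be arranged — for a generic Leray-Hopf solution $u(\cdot,t)\in L^2(\R^3)$ this decay is automatic since $\|u(\cdot,t)\|_{L^2(|x|\ge R)}\to 0$, and rescaling only improves it.

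Granting that $v_0$ verifies all the hypotheses of Theorem \ref{theo.main} with the same $M$ (up to a harmless universal constant adjustment in the $L^2_{uloc}$ bound, which can be absorbed by enlarging $M$ from the outset or by noting Theorem \ref{theo.main} holds for all $M$), we conclude $u_\lambda\in L^\infty(B_{1/3}(0)\times(\beta,S^*(M)))$ for all $\beta\in(0,S^*(M))$. Undoing the scaling, this says $u\in L^\infty$ on a parabolic region around $(0,t+\lambda^2 S^*(M))=(0,T^{*})$; precisely, $u_\lambda$ bounded on $B_{1/3}(0)\times(\beta,S^*(M))$ translates to $u$ bounded on $B_{\lambda/3}(0)\times(t+\lambda^2\beta,\,t+\lambda^2 S^*(M))$. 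Since $t+\lambda^2 S^*(M)=t+(T^{*}-t)=T^{*}$, choosing e.g. $\beta=S^*(M)/2$ shows $u$ is bounded on a backward parabolic neighborhood $Q_\rho(0,T^{*})$ with $\rho\sim\lambda>0$. Hence $(0,T^{*})$ is a regular point, contradicting the assumption that it is a singular point. This contradiction establishes \eqref{e.conblup} for all $t\in(t_*(M,r_0),T^{*})$.

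The main obstacle — and the step requiring genuine care rather than bookkeeping — is verifying that the rescaled initial data $v_0$ genuinely lies in the class $L^2_{uloc,\sigma}(\R^3)$ with a controlled norm, \emph{uniformly as $t\uparrow T^{*}$}, and that the rescaled solution $u_\lambda$ is a \emph{local energy solution in the sense of Definition \ref{def.lews}} (not merely a weak Leray-Hopf solution), so that Theorem \ref{theo.main} actually applies. The $L^2_{uloc}$ bound on $v_0$ is where the Type I hypothesis \eqref{e.typeI} is indispensable: at scale $\lambda\le r_0$ it gives $\|u(\cdot,t)\|_{L^2(B_\lambda(\bar x))}\le M\lambda^{1/2}$ for every $\bar x$ and $T^{*}-\lambda^2<t<T^{*}$, hence $\|v_0\|_{L^2(B_1(\bar x))}=\lambda^{-1/2}\|u(\cdot,t)\|_{L^2(B_\lambda(\bar x))}\le M$; one must check the time window $T^{*}-\lambda^2<t$ is compatible with $\lambda^2=(T^{*}-t)/S^*(M)$, which holds since $S^*(M)\le 1/4<1$, and the scale restriction $\lambda<r_0$ forces the lower cutoff $t_*$. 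That a Leray-Hopf solution satisfying a local energy bound restricts and rescales to a local energy solution is standard (it is built into the Lemari\'{e}-Rieusset theory and recalled in Appendix \ref{app.a}), but the pressure and the local energy inequality should be checked to transform correctly under the scaling; I would cite the relevant lemma from Appendix \ref{app.a}. A secondary, minor point is the passage from local boundedness of $u_\lambda$ near $s=S^*(M)$ back to a genuine backward parabolic cylinder at $(0,T^{*})$: one should take $\beta$ strictly less than $S^*(M)$ and note the union over such $\beta$ (or a single $\beta=S^*(M)/2$) suffices to place $(0,T^{*})$ in the interior of a region where $u$ is bounded, which is exactly the definition of a regular point.
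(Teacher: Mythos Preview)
Your proof is correct and follows essentially the same contradiction-and-rescaling argument as the paper: assume failure of \eqref{e.conblup} at some $t_0$, rescale with $\lambda=\sqrt{(T^*-t_0)/S^*(M)}$, use the Type I bound to control $\|u_\lambda(\cdot,0)\|_{L^2_{uloc}}$ and the $L^3$ smallness on $B_2(0)$, then apply Theorem \ref{theo.main} to obtain regularity at $(0,S^*(M))$, contradicting the singularity at $(0,T^*)$. Your treatment is in fact more careful than the paper's terse version---you explicitly verify the time-window compatibility $T^*-\lambda^2<t_0$ via $S^*(M)\le\frac14$, the decay condition on $v_0$ (which follows from $u(\cdot,t_0)\in L^2(\R^3)$), and the passage from Leray--Hopf to local energy solution under rescaling; the paper leaves these implicit.
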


We further extend this result. Indeed, we also prove: (i) in Section \ref{app.b} the concentration of the critical $L^{3,\infty}$ norm, and (ii) in Section \ref{app.c} the concentration of the critical Besov space norm $\dot{B}^{-1+\frac3p}_{p,\infty}$, $p\in(3,\infty)$.

By translation invariance of the Navier-Stokes equations and of the type I condition \eqref{e.typeI}, the concentration result \eqref{e.conblup} holds at any blow-up point $(\bar x,T^{*})$. Notice also that if $r_0=\infty$, $t_*=0$.

Let $M'\in(0,\infty)$. It is clear that the type I condition \eqref{e.typeI} is satisfied by Leray-Hopf solutions blowing-up at time $T^{*}>0$ and such that
\begin{equation*}
|u(x,t)|\leq \frac{M'}{|x|},\quad\mbox{for all}\ (x,t)\in\R^3\times(0,T^{*}).
\end{equation*}
More generally, it is also satisfied for Leray-Hopf solutions $u$ blowing-up at time $T^{*}>0$ and satisfying a scale-critical Morrey-type bound, i.e. 
\begin{equation*}
\|u(\cdot,t)\|_{\dot M^{2,3}}:=\sup_{\bar x\in\R^3}\sup_{r\in(0,\infty)}r^{-\frac12}\Big(\int\limits_{B_r(0)}|u(x,t)|^2dx\Big)^\frac12\leq M'
\end{equation*}
for all $t\in (0,T^{*})$. This condition corresponds to \eqref{e.typeI} with $r_0=\infty$ and $M=M'$. Hence the concentration in Theorem \ref{theo.conc} holds for any $t\in(0,T^{*})$. It is less obvious to see that type I blow-ups satisfying the bound
\begin{equation}\label{e.typeIa}
\sqrt{T^{*}-t}|u(x,t)|\leq M'
\end{equation}
or
\begin{equation}\label{e.typeIb}
\sqrt{T^{*}-t}^\theta|x|^{1-\theta}|u(x,t)|\leq M'
\end{equation}
for some $\theta\in (0,1)$ also enter the framework of Theorem \ref{theo.conc}. Yet, \eqref{e.typeIa} and \eqref{e.typeIb} imply that there exists $r_0\in (0,\infty)$ and $M(M',u_0, r)\in(0,\infty)$ such that \eqref{e.typeI} holds. This is proved in \cite{SerZajac} (see also p.\, 844-849 of \cite{Seregin2018}). We note that for $r_0=\infty$ this implication fails for the case of the half-space with Dirichlet boundary condition. This is demonstrated by Giga in \cite[Theorem 3.1]{Giga13} by using shear flows.

\end{subsection}
\begin{subsection}{Strategy of proof}
The strategy to prove Theorem \ref{theo.conc} is to rescale the solution appropriately and then reduce to Theorem \ref{theo.main}. The Type I condition \eqref{e.typeI} ensures that the rescaled solution has initial data that can be controlled in $L^{2}_{uloc}(\R^3)$. We refer to Section \ref{sec.proof} for more details.
\end{subsection}
\begin{subsection}{Final discussion}
As a corollary to Theorem \ref{theo.main} we see that if $u$ is a weak Leray-Hopf solution (with initial data $u_{0}\in L^{2}(\mathbb{R}^3)$) which first blows up at $T^{*}>0$  and has a singular point  $(0,T^{*})$, then the following holds true. Namely, there exists $t_{*}(T^*,\|u_{0}\|_{L^{2}(\mathbb{R}^3)})$ such that for $t\in [t_{*}, T^{*})$ we have:
\begin{equation}\label{concentrationWLHunitball}
\|u(\cdot, t)\|_{L^{3}(B_{1}(0))}> \gamma_{univ}.
\end{equation} 
Although the result in \cite{AlbrittonBarker2018local} shows that $\lim_{t\uparrow T^{*}}\|u(\cdot,t)\|_{L^{3}(B_{1}(0))}=\infty$, it does not provide quantitative information on at which moment in time $\|u(\cdot,t)\|_{L^{3}(B_{1}(0))}$ begins to grow.

Soon after the present work was submitted to arXiv, Kang, Miura and Tsai uploaded to arXiv an independent work \cite{KMT18} with a different proof of Theorem \ref{theo.main}. Their proof of the  subcritical Morrey bounds (Theorem \ref{theo.morrey}) is completely different to ours and relies upon compactness arguments as opposed to a Caffarelli, Kohn and Nirenberg type iteration. The H\"older continuity of Theorem \ref{localboundednesscriticalStokes} and the extension of Theorem \ref{theo.main} to wider critical spaces is not present in \cite{KMT18}. 

\end{subsection}

\subsection*{Outline of the paper}

Section \ref{sec.morrey} is devoted to the proof of a Morrey bound such as \eqref{e.submorrey}. The main result in this section is Theorem \ref{theo.morrey}, which is proved using a Caffarelli, Kohn and Nirenberg type iteration. Section \ref{sec.boundedness} handles the bootstrap arguments on the perturbed linear system in order to prove the boundedness and the H\"older continuity of the perturbation $u-a$ up to initial time. The main result in this section is Theorem \ref{localboundednesscriticalStokes}. Section \ref{sec.proof} is concerned with the proof of Theorem \ref{theo.main} and its application to the concentration of the $L^3$ norm near a potential singularity, Theorem \ref{theo.conc}. In Appendix \ref{app.a} we recall well-known results about mild solutions and local energy solutions, and we give pressure formulas. Appendix \ref{app.b} is devoted to the extension of our results to the Lorentz space $L^{3,\infty}$. Appendix \ref{app.c} is concerned with the extension of our results to the Besov space $\dot{B}^{-1+\frac3p}_{p,\infty}$ for $p\in (3,\infty)$, which requires some new ideas.

Throughout the paper the constant $C\in (0,\infty)$ denotes a universal constant, unless stated otherwise.

\section{Propagation of a Morrey-type bound}
\label{sec.morrey}

The goal of this section is to prove a Morrey-type bound for local suitable solutions of
\begin{align}\label{e.pertns}
\partial_tv -\Delta v+\nabla q\ =-v\cdot\nabla v-a\cdot\nabla v-\nabla\cdot(a\otimes v),\qquad \nabla\cdot v\ =0,
\end{align}
by using a Caffarelli, Kohn and Nirenberg \cite{CKN82} type iteration.

\begin{definition}[Local suitable solution]\label{def.lsws}
Let $a\in L^5(Q_1)$. The pair $(v,q)$ is a local suitable solution in $Q_1$ to the perturbed Navier-Stokes equations \eqref{e.pertns} if  
\begin{align*}
v\ \mbox{belongs to}\ L^\infty(-1,0;L^2(B_1(0)))\cap L^2(-1,0;H^1(B_1(0))),\quad q\ \mbox{belongs to}\ L^\frac32(Q_1),
\end{align*}
$v$ is in $C_{w}([-1,0]; L^{2}(B_{1}(0)))$, $(v,q)$ is a solution to \eqref{e.pertns} in $Q_1$ in the sense of distributions and for all $0\leq \phi\in C^\infty_c(Q_1)$, we have the following local energy equality
\begin{align}\label{e.lei}
\begin{split}
&\int\limits_{B_1(0)}|v(x,t)|^2\phi(x,t) dx+2\int\limits_{-1}^t\int\limits_{B_1(0)}|\nabla v|^2\phi dxds\\
\leq\ &\int\limits_{-1}^t\int\limits_{B_1(0)}|v|^2(\partial_t\phi+\Delta\phi)dxds+\int\limits_{-1}^t\int\limits_{B_1(0)}(|v|^2+2q)v\cdot\nabla\phi dxds\\
&-\int\limits_{-1}^t\int\limits_{B_1(0)}(a\cdot\nabla v)\cdot v\phi dxds+\int\limits_{-1}^t\int\limits_{B_1(0)}(a\otimes v):(\nabla v \phi +v\otimes\nabla\phi)dxds
\end{split}
\end{align}
for all $t\in(-1,0]$.
\end{definition}

The following theorem is a generalization to scale-critical drifts of the $\ep$-regularity result for subcritical drifts proved in the paper by Jia and \v{S}ver\'{a}k \cite{JS14}.

\begin{theorem}\label{theo.morrey}
For all $\delta\in(0,3)$, 
, there exists $C_*(\delta)\in (0,\infty)$, for all $E\in(0,\infty)$, 
there exists $\ep_*(\delta,E)\in (0,\infty)$, for all $a\in L^5(Q_1)$ and all local suitable solution $v$ to \eqref{e.pertns} in $Q_1(0,0)$ such that
\begin{equation}\label{e.locenE}
\sup_{-1<s<0}\int\limits_{B_1(0)}|v(x,s)|^2dx+\int\limits_{Q_{1}(0,0)}|\nabla v|^2dxds\leq E,
\end{equation}
the conditions
\begin{equation}\label{e.epmorreya}
\|a\|_{L^5(Q_1(0,0))}
\leq \ep_*
\end{equation}
and
\begin{equation}\label{e.epmorrey}
\int\limits_{Q_1(0,0)}|v|^3+|q|^\frac32dxds\leq\ep_*
\end{equation}
imply that for all $(\bar x,t)\in \bar Q_{1/2}(0,0)$, for all $r\in(0,\frac14]$,
\begin{equation}\label{e.morreybound}
\intbar_{Q_r(\bar x,t)}|v|^3dxds\leq C_*\ep_*^\frac23r^{-\delta}.
\end{equation}
\end{theorem}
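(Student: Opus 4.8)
The plan is to run a Caffarelli--Kohn--Nirenberg type excess-decay iteration on scale-invariant quantities, the novelty being the treatment of the scale-critical drift $a$. Fix $z_0=(\bar x,t)\in\bar Q_{1/2}(0,0)$, so that $Q_{2r}(z_0)\subset Q_1(0,0)$ for $r\le\tfrac14$, and for $0<r\le\tfrac14$ introduce
$$A(r)=\sup_{t-r^2\le s\le t}\tfrac1r\!\int_{B_r(\bar x)}\!|v(\cdot,s)|^2,\quad E(r)=\tfrac1r\!\int_{Q_r(z_0)}\!|\nabla v|^2,\quad C(r)=\tfrac1{r^2}\!\int_{Q_r(z_0)}\!|v|^3,$$
$$D(r)=\tfrac1{r^2}\!\int_{Q_r(z_0)}\!\big|q-\langle q\rangle_{B_r(\bar x)}\big|^{3/2},\qquad \mathcal K(r)=\|a\|_{L^5(Q_r(z_0))},$$
all invariant under the Navier--Stokes scaling, with $\mathcal K$ non-decreasing. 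Set $\Phi(r)=A(r)+E(r)+C(r)^{2/3}+D(r)^{2/3}$. Since $\tfrac1{|Q_r|}\int_{Q_r(z_0)}|v|^3\simeq r^{-3}C(r)\le r^{-3}\Phi(r)^{3/2}$, the bound \eqref{e.morreybound} will follow once we show, for the prescribed $\delta\in(0,3)$, that $\Phi(r)\le C(\delta)\,\ep_*^{2/3}\,r^{\,2-2\delta/3}$ for $0<r\le\tfrac14$; note $2-2\delta/3\in(0,2)$.

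I would build this from a single-scale estimate: there is a universal $C_1$ such that for all $\theta\in(0,\tfrac14)$ and all $r\le\tfrac14$,
$$\Phi(\theta r)\ \le\ C_1\,\theta^{2}\,\Phi(r)\ +\ C_1\,\theta^{-6}\Big(\Phi(r)^{1/2}+\mathcal K(2r)\Big)\,\Phi(r).$$
The ingredients are: (i) the local energy inequality \eqref{e.lei}, tested against a cutoff that is $1$ on $Q_\rho(z_0)$ and supported in $Q_{2\rho}(z_0)$, which controls $A(\rho)+E(\rho)$ by $C(2\rho)^{2/3}+C(2\rho)+C(2\rho)^{1/3}D(2\rho)^{2/3}$ plus the drift contributions $-(a\cdot\nabla v)\cdot v$ and $(a\otimes v):(\nabla v\,\phi+v\otimes\nabla\phi)$, each of which is handled by $\|a\|_{L^5}\|v\|_{L^{10/3}}(\|\nabla v\|_{L^2}+\rho^{-1}\|v\|_{L^2})$ via the parabolic embedding $L^\infty_tL^2_x\cap L^2_tH^1_x\hookrightarrow L^{10/3}_{t,x}$, hence carries the factor $\mathcal K(2\rho)$ times genuine $v$-norms; (ii) the pressure decomposition: since $-\Delta q=\partial_i\partial_j(v_iv_j+a_iv_j+v_ia_j)$ in $B_1$, split $q=q_1+q_2$ on $B_r(\bar x)$, with $q_1$ the Newtonian potential of $\partial_i\partial_j[\chi(v\otimes v+a\otimes v+v\otimes a)]$ for a cutoff $\chi$ equal to $1$ on $B_{3r/4}(\bar x)$ and $q_2$ harmonic there; Calder\'on--Zygmund and H\"older ($\tfrac23=\tfrac15+\tfrac7{15}$) give $\|q_1\|_{L^{3/2}(Q_r)}\le C\|v\|_{L^3(Q_r)}^2+C\mathcal K(r)\|v\|_{L^{15/7}(Q_r)}$, while interior estimates and Poincar\'e for the harmonic $q_2$ gain a power $(\rho/r)^{5/2}$; (iii) the parabolic Gagliardo--Nirenberg inequality $C(\rho)\le C\Phi(\rho)^{3/2}$; and (iv) a comparison $v=v_h+v_p$ on $Q_r$ with $v_h$ caloric and $v_p$ the Duhamel term driven by $\nabla q$ and $\nabla\cdot(v\otimes v+a\otimes v+v\otimes a)$, the point being that the $v_h$-part of each of $A$, $E$, $C$, $D$ at scale $\theta r$ carries a factor $\ge\theta^2$ (e.g.\ $\int_{B_{\theta r}}|v_h|^2\le C(\theta r)^3\sup_{Q_{r/2}}|v_h|^2\le C\theta^3 r\,A(r)$), whereas the $v_p$-part is quadratic in $v$ or linear in $a$ and, although it comes with a negative power of $\theta$, is absorbed into the second term once $\Phi(r)$ and $\mathcal K(2r)$ are small. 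Assembling (i)--(iv), with the small-scale quantities traded for scale-$r$ ones through the caloric comparison, yields the displayed single-scale estimate.

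With this in hand I would initialize and iterate. The first step is structurally different because $A,E$ are not a priori small: applying (i) at the fixed scale $r_1=\tfrac18$ and using only \eqref{e.locenE}, \eqref{e.epmorreya}, \eqref{e.epmorrey} (together with $\|v\|_{L^{10/3}(Q_1)}\le CE^{1/2}$), one gets $A(r_1)+E(r_1)\le C(\ep_*^{2/3}+\ep_* E)$ while trivially $C(r_1),D(r_1)\le C\ep_*$, hence $\Phi(r_1)\le C(\ep_*^{2/3}+\ep_* E)=:\ep_1$. Given $\delta\in(0,3)$, first choose $\theta=\theta(\delta)\in(0,\tfrac14)$ with $C_1\theta^2\le\tfrac12\theta^{\,2-2\delta/3}$, and then $\ep_*=\ep_*(\delta,E)$ so small that $\ep_1\le1$, $\ep_1\le C\ep_*^{2/3}$ and $C_1\theta^{-6}(\ep_1^{1/2}+\ep_*)\le\tfrac12\theta^{\,2-2\delta/3}$; since $\mathcal K(2r)\le\|a\|_{L^5(Q_1(0,0))}\le\ep_*$ for $r\le\tfrac14$, the single-scale estimate gives $\Phi(\theta r)\le\theta^{\,2-2\delta/3}\Phi(r)$ whenever $\Phi(r)\le\ep_1$, and this smallness propagates. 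Thus $\Phi(\theta^k r_1)\le\theta^{\,(2-2\delta/3)k}\ep_1$ for all $k$; for $0<r\le r_1$ choose $k$ with $\theta^{k+1}r_1<r\le\theta^k r_1$ and use $C(r)\le C\theta^{-2}C(\theta^k r_1)\le C\theta^{-2}\Phi(\theta^k r_1)^{3/2}$ to obtain $\tfrac1{|Q_r|}\int_{Q_r(z_0)}|v|^3\le C(\delta)\ep_*^{2/3}r^{-\delta}$, while the range $r\in(r_1,\tfrac14]$ is immediate from \eqref{e.epmorrey}. As $z_0\in\bar Q_{1/2}(0,0)$ was arbitrary, this is \eqref{e.morreybound}.

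The main obstacle is the scale-critical drift. Because $\mathcal K(r)=\|a\|_{L^5(Q_r(z_0))}$ does \emph{not} decay as $r\to0$ --- in contrast with the subcritical norm $\|a\|_{L^\infty_tL^m_x}$, $m>3$, of Jia and \v{S}ver\'ak, whose smallness at small scales drove their compactness argument --- one cannot absorb the $a$-contributions by shrinking the radius. The iteration can only close if (a) $\ep_*$ is fixed small once and for all, and (b) one checks, term by term in the local energy inequality and in the pressure estimate, that every $a$-contribution appears multiplied by at least one genuine norm of $v$, i.e.\ by a positive power of $\Phi$, so that it stays strictly lower order and never competes with the $\theta^2$ gain; the exponent bookkeeping (notably $\tfrac23=\tfrac15+\tfrac7{15}$ for the mixed term $a\otimes v$, and the parabolic embeddings) is precisely what singles out $L^5$ as the critical threshold for $a$. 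Two further points require care: the pressure lies only in $L^{3/2}$, so the decomposition and all Calder\'on--Zygmund/interior estimates must be run at that borderline integrability; and the first step of the iteration must be extracted from the smallness of $C,D,\mathcal K$ alone, which is the source of the dependence $\ep_*=\ep_*(\delta,E)$ and of the factor $\ep_*^{2/3}$ in the conclusion.
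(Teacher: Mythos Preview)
Your scheme is a genuine alternative to the paper's, and the overall architecture (CKN-type iteration, scale-invariant quantities, pressure split into a Calder\'on--Zygmund part and a harmonic part, the drift terms always appearing with a factor $\mathcal K(r)\leq\ep_*$ times a positive power of $\Phi$) is sound. The paper, however, does \emph{not} run a single-step $\theta$-iteration with a caloric comparison. Instead it works at all dyadic scales $r_k=2^{-k}$ simultaneously, using the CKN test functions $\phi_n$ (truncated backward heat kernels, Lemma~\ref{lem.test}). The gain that plays the role of your $\theta^2$ comes from $|\partial_t\phi_n+\Delta\phi_n|\leq C r_n^2$, so the term $\int|v|^2(\partial_t\phi_n+\Delta\phi_n)$ in the local energy inequality is automatically of size $r_n^2$. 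The pressure and the drift terms $I_3,I_4,I_5$ are handled by telescoping over dyadic annuli, with a pressure lemma (Lemma~\ref{lem.pressure}) containing explicit nonlocal tails; this is where the restriction $\delta>0$ enters, since $\|a\|_{L^5(Q_{r_k})}$ does not decay and one must sum a geometric series with ratio $2^{-c\delta}$.

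What each approach buys: the paper's telescoping scheme avoids any comparison lemma and keeps everything at the level of the energy inequality, at the price of heavier bookkeeping (the sums over $k$ in $I_3$, $I_4$, $J_3$, $J_4$). Your single-step iteration is more modular, but your item~(iv) is the delicate point and, as written, has a gap: if $v_h$ is merely caloric with $v_h=v$ on the parabolic boundary of $Q_r$, then $\nabla\cdot v_h\neq 0$ in general (Dirichlet data fix only tangential derivatives on the lateral boundary), so in the energy estimate for $v_p=v-v_h$ the term $\int q\,\nabla\cdot v_p$ does not vanish, and with $q\in L^{3/2}$ only this term is not obviously controllable. The clean fix is to compare with the \emph{Stokes} system rather than the heat equation (so that $\nabla\cdot v_h=0$ and the pressure drops out), and then invoke interior Stokes regularity for the $\theta^2$ gain. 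Also, your statement that ``the $v_h$-part of $D$ carries a factor $\theta^2$'' is not quite right: the pressure is handled separately by your item~(ii), and the harmonic part $q_2$ supplies the gain for $D$ while the Stokes comparison supplies it for $A,E,C$. With these two clarifications your route would go through.
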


Let $(\bar x,t)\in \bar Q_{1/2}(0,0)$ be fixed for the rest of this section. For all $n\in\N$, we let $r_n:=2^{-n}$. We actually prove that for all $n\geq 2$,
\begin{equation}\label{e.morryrn}
\intbar_{Q_{r_n}(\bar x,t)}|v|^3dxds\leq \ep_*^\frac23r_n^{-\delta}.
\end{equation}
In this estimate, contrary to \eqref{e.morreybound}, there is no constant $C_*(\delta)$ in the right hand side. The constant $C_*(\delta)$ comes from going from scales $r_n$ to all $r\in(0,\frac14]$.  

The proof is by iteration following the scheme of Caffarelli, Kohn and Nirenberg \cite{CKN82} (see also \cite{RRS16}). Our aim is to propagate for $k\geq 2$ the following two bounds
\begin{align}
\tag{$A_k$}\label{e.ak} \frac1{r_k^2}\int\limits_{Q_{r_k}(\bar x,t)}|v|^3dxds+\frac1{r_k^{\frac{1+\delta}2}}\int\limits_{Q_{r_k(\bar x,t)}}|q-(q)_{r_k}(s)|^\frac32dxds\leq\ &\ep_*^\frac23r_k^{3-\delta},\\
\tag{$B_k$}\label{e.bk} \sup_{t-r_k^2<s<t}\int\limits_{B_{r_k}(\bar x)}|v(x,s)|^2dx+\int\limits_{Q_{r_k}(\bar x,t)}|\nabla v|^2dxds\leq\ & C_B\ep_*^\frac23r_k^{3-\frac23\delta}
\end{align}
where 
\begin{equation*}
(q)_{r_k}(s):=\intbar_{B_{r_k}(\bar x)}q(x,s)dx,
\end{equation*}
for constants $\ep_*(\delta,E),\, C_B(\delta)\in (0,\infty)$ to be chosen such that
\begin{equation}\label{e.choiceCB}
C_B=10C_1^2\big(\tfrac{2^{11}}{1-2^{-\frac{2}3\delta}}+2^6\big)
\end{equation}
and
\begin{multline}\label{e.choiceepstar}
\max\big(D_1(\delta)\ep_*^\frac13,D_{2}(1+E)\ep_*^\frac13,5D_3(\delta)\ep_*,D_4(\delta)\ep_*^\frac79,2C_3C_B^\frac32\ep_*^\frac13,\\
12D_5(\delta)\ep_*^\frac13,12D_6(\delta)\ep_*^\frac{2}{15},12D_7(\delta)\ep_*^\frac13,12D_8(\delta)\ep_*^\frac43\big)\leq 1,
\end{multline}
where the constants $D_1,\ldots\,$ are defined in the course of the proof of Theorem \ref{theo.morrey}. 
Notice that \eqref{e.ak} is a bound on a scale-invariant quantity for $v$. However the quantity on the left hand side of \eqref{e.ak} related to the pressure $q$ is not scale-critical. Indeed, we allow for some room in the rate of decay in $r_k$ for the oscillation of the pressure, which gives more flexibility in the argument. The bound \eqref{e.bk} is a bound on the local energy.

\begin{remark}
The reason one cannot take $\delta=0$ appears clearly in the proof of Theorem \ref{theo.morrey}. Indeed, the lack of improvement of decay of $\|a\|_{L^5(Q_r)}$ combined with $\delta=0$ would lead to a linear growth in $k$ when controlling some terms, in particular $I_3$ and $I_4$ below. We would then not be able to offset this growth by taking $\ep_*$ small uniformly in $k$. Notice that if one knows for some reason that $a$ has more integrability, then one can then take $\delta=0$. This would be the case for instance if $a$ solves the Navier-Stokes equations so that one can apply a Serrin's type criteria. However, in view of proving Theorem \ref{theo.main}, we need Theorem \ref{theo.morrey} for a general $a$ small in $L^5_{t,x}$, not necessarily a solution to the Navier-Stokes equations.
\end{remark}

The proof of \eqref{e.bk} for $k=n+1\geq 3$ relies on the local energy inequality \eqref{e.lei}, the bounds \eqref{e.ak} and \eqref{e.bk} for all $2\leq k\leq n$. The idea is to use test functions $(\phi_n)_{n\geq 2}$, which are almost solutions to the backward heat equation. The following lemma is taken directly from \cite[Lemma 15.11]{RRS16}, see also \cite{CKN82}. We give a statement for the sake of completeness.

\begin{lemma}[Construction of test functions $\phi_n$]\label{lem.test}
There exists a constant $C_1\in(0,\infty)$ such that for all $(\bar x,t)\in\R^3\times (0,\infty)$, for all $n\in\N$, $n\geq 2$, there exists 
\begin{equation*}
\phi_n\in C^\infty_c\big(B_{\frac12}(\bar x)\times(t-\tfrac19,t+\tfrac{r_n^2}{2})\big)
\end{equation*}
such that
\begin{align}
&C_1^{-1}r_n^{-1}\leq\phi_n\leq C_1r_n^{-1}\quad\mbox{and}\quad |\nabla\phi_n|\leq C_1r_n^{-2}\ \mbox{on}\quad Q_{r_n}(\bar x,t),\\
&\phi_n\leq C_1r_n^2r_k^{-3}\quad\mbox{and}\quad|\nabla\phi_n|\leq C_1r_n^2r_k^{-4}\ \mbox{on}\ Q_{r_{k-1}}(\bar x,t)\setminus Q_{r_k}(\bar x,t),\ 2\leq k\leq n,\\
&(\supp\phi_n)\cap Q_1(\bar x,t)\subset Q_{\frac13}(\bar x,t),\\
&|\partial_t\phi_n+\Delta\phi_n|\leq C_1r_n^2\quad\mbox{on}\quad \R^3\times(-\infty,0],
\end{align}
where $r_n=2^{-n}$.
\end{lemma}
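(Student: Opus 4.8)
The strategy is the classical Caffarelli--Kohn--Nirenberg device: one builds $\phi_n$ as a smooth truncation of the backward heat kernel centered at $(\bar x, t + r_n^2/2)$, so that $\phi_n$ is an \emph{approximate} solution of the backward heat equation on the relevant range of scales and has the stated pointwise upper/lower bounds dyadically. First I would normalize by translation so that $(\bar x,t)=(0,0)$, since every estimate claimed is translation invariant; this is why the lemma can be stated for all $(\bar x,t)$ with a single universal constant $C_1$. Then set $t_n := t + r_n^2/2$ and consider the (backward in time) Gaussian
\[
\Gamma_n(x,s) := \frac{1}{\big(4\pi(t_n-s)\big)^{3/2}}\exp\!\Big(-\frac{|x-\bar x|^2}{4(t_n-s)}\Big),
\]
which solves $\partial_s\Gamma_n + \Delta\Gamma_n = 0$ for $s<t_n$. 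On the cylinder $Q_{r_n}(\bar x,t)$ one has $t_n-s \sim r_n^2$, so $\Gamma_n \sim r_n^{-3}$ there, which is too large by a factor $r_n^{-2}$; the right object is $r_n^2\,\Gamma_n$. More generally, on the dyadic shell $Q_{r_{k-1}}\setminus Q_{r_k}$ with $2\le k\le n$ one has $|x-\bar x|^2 + (t_n-s) \gtrsim r_k^2$, and either $t_n - s \sim r_n^2$ (when $|x-\bar x|\gtrsim r_k \gg r_n$, so the Gaussian factor is harmless) or $t_n-s \sim r_k^2$, and in both cases the elementary bound $(t_n-s)^{-3/2}\exp(-|x-\bar x|^2/4(t_n-s)) \lesssim r_k^{-3}$ holds; this yields $r_n^2\Gamma_n \lesssim r_n^2 r_k^{-3}$ on that shell, and differentiating the Gaussian costs one factor of $r_k^{-1}$, giving $|\nabla(r_n^2\Gamma_n)| \lesssim r_n^2 r_k^{-4}$.

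Next I would introduce a fixed cutoff: pick $\chi\in C^\infty_c(\R^3\times\R)$ with $\chi\equiv 1$ on $Q_{1/4}(0,0)$ (say) and $\supp\chi\subset Q_{1/3}(0,0)$, translate it to be centered appropriately, and define $\phi_n := r_n^2\,\chi\,\Gamma_n$ after a harmless smooth modification near $s=t_n$ to ensure compact support in time inside $(t-\tfrac19, t+\tfrac{r_n^2}{2})$ — note $Q_{1/3}$ has time-extent $1/9$, which is why the lower time endpoint $t-1/9$ appears, and the cutoff is what forces $(\supp\phi_n)\cap Q_1 \subset Q_{1/3}$. The membership $\phi_n\in C^\infty_c(B_{1/2}(\bar x)\times(t-\tfrac19,t+\tfrac{r_n^2}{2}))$ is then immediate. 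The lower bound $\phi_n \ge C_1^{-1}r_n^{-1}$ on $Q_{r_n}$ holds because there $\chi\equiv 1$ and $r_n^2\Gamma_n \sim r_n^2 r_n^{-3} = r_n^{-1}$; the matching upper bounds and the gradient bounds on $Q_{r_n}$ and on the dyadic shells come from the Gaussian estimates above together with the fact that $\nabla\chi$ is bounded and supported where $\Gamma_n$ and $r_n^2\Gamma_n$ are already controlled by $r_n^2$ (being away from the singularity). Finally, the key ``almost backward-heat'' estimate $|\partial_s\phi_n + \Delta\phi_n| \le C_1 r_n^2$ on $\R^3\times(-\infty,0]$ follows by the Leibniz rule: since $r_n^2\Gamma_n$ is an exact solution, $\partial_s\phi_n+\Delta\phi_n = r_n^2\big((\partial_s\chi+\Delta\chi)\Gamma_n + 2\nabla\chi\cdot\nabla\Gamma_n\big)$, and all terms are supported in $\supp\nabla\chi \subset Q_{1/3}\setminus Q_{1/4}$, where $t_n - s \gtrsim 1$ so $\Gamma_n = O(1)$ and $\nabla\Gamma_n = O(1)$ with constants independent of $n$; hence the whole expression is $O(r_n^2)$.

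The main obstacle — really the only nontrivial bookkeeping — is verifying the shell bounds $\phi_n \lesssim r_n^2 r_k^{-3}$ and $|\nabla\phi_n| \lesssim r_n^2 r_k^{-4}$ on $Q_{r_{k-1}}\setminus Q_{r_k}$ uniformly in the two parameters $n \ge k$, since one must split according to whether it is the spatial distance $|x-\bar x|\sim r_k$ or the parabolic-time distance that dominates, and check that the Gaussian decay absorbs the $r_n^{-3}$ blow-up of the kernel in the regime $r_n \ll r_k$. This is elementary (it reduces to the inequality $\sup_{\rho>0}\rho^{-3}e^{-c/\rho^2} < \infty$ and scaling), but it is where all the constants are tied together, so all of them can be folded into a single universal $C_1$. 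Since the excerpt explicitly states that this lemma is quoted verbatim from \cite[Lemma~15.11]{RRS16}, I would simply cite that reference for the detailed constant-chasing and present the construction above as the outline.
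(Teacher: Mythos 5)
Your construction is correct and coincides with the proof behind the paper's statement: the paper does not prove this lemma but quotes it verbatim from \cite[Lemma 15.11]{RRS16} (going back to \cite{CKN82}), and that proof is precisely the truncated backward heat kernel $\phi_n=r_n^2\chi\,\Gamma_n$ with singular time $t+O(r_n^2)$ that you describe, including the dyadic-shell Gaussian estimates and the commutator bound for $\partial_t\phi_n+\Delta\phi_n$. One small imprecision to fix when writing it up: on $\supp\nabla\chi$ it is not true that $t_n-s\gtrsim 1$ (points with $|x-\bar x|\geq \tfrac14$ and $s$ close to $t$ lie there as well), so the uniform $O(1)$ bounds on $\Gamma_n$ and $\nabla\Gamma_n$ on that set should instead be justified by the spatial separation together with the Gaussian decay — the same elementary inequality $\sup_{\rho>0}\rho^{-3}e^{-c/\rho^2}<\infty$ you already invoke for the shell estimates — and likewise the cutoff should be taken equal to $1$ for times up to (slightly past) $t$ and cut off only near $t+\tfrac{r_n^2}2$, so that the lower bound on $Q_{r_n}(\bar x,t)$ is not destroyed.
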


Assuming \eqref{e.bk} for $2\leq k\leq n$, we easily get the bound \eqref{e.ak} for $k=n$ on $v$ by interpolation. To prove the pressure bound, we need a representation formula for the pressure.

\begin{lemma}[Pressure estimate]\label{lem.pressure}
There exists a constant $C_2\in(0,\infty)$ such that for all $\rho\in(0,\infty)$, for all $a\in L^5(Q_\rho(0,0))$, for all weak solution $q\in L^\frac32(Q_\rho(0,0))$ to 
\begin{equation*}
-\Delta q=\nabla\cdot\nabla\cdot(v\otimes v)+\nabla\cdot\nabla\cdot(a\otimes v)+\nabla\cdot\nabla\cdot(v\otimes a)\quad\mbox{in}\quad Q_\rho(0,0), 
\end{equation*}
we have 
\begin{align}
\label{e.estpress}
\begin{split}
&r^{-\frac{1+\delta}2}\int\limits_{Q_r(0,0)}|q-(q)_r(s)|^\frac32dxds\\
\leq\ & C_2r^{-\frac{1+\delta}2}\int\limits_{Q_{2r}(0,0)}|v|^3dxds+C_2r^{\frac{1-\delta}2}\Bigg(\int\limits_{Q_{2r}(0,0)}|v|^3dxds\Bigg)^\frac12\Bigg(\int\limits_{Q_{2r}(0,0)}|a|^5dxds\Bigg)^\frac3{10}\\
&+C_2r^{6-\frac\delta2}\Bigg(\sup_{-r^2<s<0}\int\limits_{2r<|x|<\rho}\frac{|v(x,s)|^2}{|x|^4}dx\Bigg)^\frac32\\
&+C_2r^{4-\frac\delta2}\int\limits_{-r^2}^0\Bigg(\int\limits_{2r<|x|<\rho}\frac{|v(x,s)a(x,s)|}{|x|^4}dx\Bigg)^\frac32ds\\
&+C_2r^{4-\frac\delta2}\rho^{-\frac92}\int\limits_{Q_\rho(0,0)}|v|^3+|q|^\frac32dxds\\
&+C_2r^\frac{44-5\delta}{10}\rho^{-\frac{39}{10}}\Bigg(\int\limits_{Q_\rho(0,0)}|v|^3dxds\Bigg)^\frac12\Bigg(\int\limits_{Q_\rho(0,0)}|a|^5dxds\Bigg)^\frac3{10},
\end{split}
\end{align}
for all $0<r\leq\rho/2$.
\end{lemma}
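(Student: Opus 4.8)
The plan is to split the pressure into a local part (governed by the equation inside $Q_{2r}(0,0)$) and a harmonic correction (carrying the influence of the region $2r<|x|<\rho$, plus the boundary data near $\partial Q_\rho$), via a standard cutoff-and-Newtonian-potential decomposition. Concretely, fix a smooth cutoff $\chi$ equal to $1$ on $B_r$ and supported in $B_{2r}$, with $|\nabla^j\chi|\lesssim r^{-j}$. Write, for each fixed time slice $s\in(-r^2,0)$,
\begin{equation*}
q(x,s)=q_{loc}(x,s)+q_{har}(x,s),
\end{equation*}
where $q_{loc}$ is the Newtonian potential of $\chi$ times the distributional right-hand side $\nabla\cdot\nabla\cdot(v\otimes v+a\otimes v+v\otimes a)$, and $q_{har}$ is harmonic in $B_r$. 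On the local piece one integrates by parts to move the two divergences onto the kernel $\nabla\nabla(-\Delta)^{-1}$, which is a Calder\'on--Zygmund operator; Riesz-transform bounds on $L^{3/2}(\R^3)$ (for the $v\otimes v$ term) and the H\"older-interpolated estimate $\|v\otimes a\|_{L^{30/23}}\lesssim \|v\|_{L^3}\|a\|_{L^5}$ (for the mixed terms) give, after raising to the power $3/2$ and integrating in $s\in(-r^2,0)$, exactly the first two lines on the right-hand side of \eqref{e.estpress} — the powers of $r$ there are bookkeeping of the prefactor $r^{-(1+\delta)/2}$ against the $L^{3/2}$-vs-$L^{3}$ scaling. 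For the harmonic piece, since $q_{har}-(q_{har})_r(s)$ is a harmonic function with mean zero on $B_r$, interior estimates give $\sup_{B_r}|q_{har}-(q_{har})_r(s)|\lesssim r\,\|\nabla q_{har}\|_{L^\infty(B_{r})}\lesssim r^{-2}\,r^{3}\cdot(\text{average of }|q_{har}|\text{ over }B_{3r/2})$, so one needs a pointwise-in-$x$, $L^{3/2}$-in-$s$ bound on $q_{har}$ on an annulus. That bound comes from representing $q_{har}$ as the sum of (a) the Newtonian potential of $(1-\chi)$ times the source, whose kernel $\nabla\nabla(-\Delta)^{-1}(x-y)$ decays like $|x-y|^{-3}\sim |y|^{-3}$ for $|x|\le 3r/2\ll|y|$, and (b) the part of $q$ already present, localized by a Poisson-type / mean-value argument against the full-domain data on $Q_\rho$.

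The key steps, in order: (1) set up the cutoff decomposition $q=q_{loc}+q_{har}$ time-slice by time-slice, checking $q_{har}$ is harmonic on $B_r$; (2) estimate $q_{loc}$ by Calder\'on--Zygmund theory plus H\"older to produce the first two displayed terms; (3) reduce the contribution of $q_{har}$ to $\sup$-bounds on an annulus via the mean-value / interior-gradient estimate for harmonic functions, gaining the factor $r$ from the oscillation and a factor $r^{3}$ from the volume in passing from sup to average; (4) bound the ``far-field'' part of $q_{har}$: expand the double-divergence Newtonian kernel on $2r<|y|<\rho$, which produces $\int_{2r<|y|<\rho}|y|^{-3}(|v|^2+|v||a|)\,dy$, whose $L^{3/2}_s$ norm (after pulling out $|y|^{-1}$ and using Cauchy--Schwarz in $y$ against $|y|^{-4}$) yields the third and fourth displayed terms with their prefactors $r^{6-\delta/2}$ and $r^{4-\delta/2}$; (5) bound the ``boundary'' part of $q_{har}$, i.e. the difference between the restriction of $q$ and the Newtonian potential over all of $Q_\rho$ — this is a function harmonic on $Q_{\rho/2}$, say, so again by the mean-value property its sup on $B_{3r/2}$ is controlled by $\rho^{-3}\int_{B_\rho}|\cdot|$, and tracking all the scaling (the source contributes $\rho^{-3/2}\|v\|_{L^3}^{2}$-type and $\rho^{-39/10}$-type terms) gives the last two displayed terms; (6) collect, matching powers of $r$ and $\rho$ against \eqref{e.estpress} and absorbing all numerical constants into a single $C_2$.

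The main obstacle I expect is Step (5): correctly isolating the ``boundary layer'' of the pressure. Since $q$ is only assumed a weak solution of the Poisson equation on $Q_\rho$ with no boundary condition, one cannot simply write $q$ as the Newtonian potential of its source over $\R^3$; the difference $q-(\text{Newtonian potential of source over }Q_\rho)$ is harmonic in space on $Q_\rho$, and one must extract from the bare $L^{3/2}(Q_\rho)$ bound on $q$ itself (which is all that is available) a pointwise-in-$x$ estimate on a much smaller ball, with the sharp gain in $\rho$. This is exactly where the slightly unusual exponents $\rho^{-9/2}$ and $\rho^{-39/10}$ and $r^{44-5\delta}{/}10$ in \eqref{e.estpress} come from, and getting them precisely right — rather than merely with some admissible negative power of $\rho$ — requires care: one interpolates the harmonic function's sup bound (which costs a volume factor $\rho^{-3}$ in $L^1$, or $\rho^{-2}$ in $L^{3/2}$ after H\"older) against the space-time scaling of $\|q\|_{L^{3/2}(Q_\rho)}$ and of the $\|v\otimes a\|$-type norm, and one must also use the $L^{3/2}$-in-time integration on the small cylinder $Q_r$ (whose time length is $r^2$) to convert the pointwise-in-time sup bounds into the stated space-time integrals. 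By contrast, Steps (2) and (4) are routine Calder\'on--Zygmund and kernel-decay estimates, and Step (3) is the classical interior estimate for harmonic functions.
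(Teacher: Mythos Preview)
Your proposal is correct and follows essentially the same route as the paper: both decompose the pressure via the Newtonian potential into a local Calder\'on--Zygmund piece (first two terms), a far-field kernel-decay piece on the annulus $\{2r<|y|<\rho\}$ (third and fourth terms), and boundary-layer terms at scale $\rho$ (last two terms). The paper organizes this slightly differently --- it places a single smooth cutoff $\varphi$ at scale $\rho$, equal to $1$ on $B_{3\rho/4}$, applies the explicit pressure-localisation formula of Lemma~\ref{pressure localisation}, and then \emph{sharply} splits the resulting Calder\'on--Zygmund term $\nabla^2N\ast(\varphi V)$ at $|y|=2r$ into local and nonlocal parts --- but the ingredients and the final exponents are identical. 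One small technical slip in your plan: with $\chi=1$ only on $B_r$, the far-field source $(1-\chi)V$ reaches down to $\partial B_r$, so for $|x|$ near $r$ the kernel bound $|x-y|^{-3}\sim|y|^{-3}$ fails and your Step~3 interior-gradient estimate (which needs harmonicity on $B_{3r/2}$) is not justified; this is fixed trivially by taking $\chi=1$ on $B_{3r/2}$, still supported in $B_{2r}$.
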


\begin{proof}[Proof of Lemma \ref{lem.pressure}]
The proof follows the lines of \cite[Lemma 15.12]{RRS16}. We first adapt the decomposition of the pressure given in Lemma \ref{pressure localisation}. We take a cut-off function $\varphi$ such that $\varphi=1$ on $B(0,\frac34\rho)$, $\supp\varphi\subset B(0,\rho)$ and $\rho|\nabla\varphi|\leq K_2$ and $\rho^2|\nabla^2\varphi|\leq K_2'$ for universal constants $K_2,\, K_2'\in(0,\infty)$. 
We have for all $x\in\R^3$ and almost every $s\in (-\rho^2,0)$,
\begin{align*}
\varphi q(x,s)=\ &-\nabla^2N\ast(\varphi v\otimes v)-\nabla^2N\ast(\varphi a\otimes v)-\nabla^2N\ast(\varphi v\otimes a)\\
&-2\nabla N\ast(\nabla\varphi\cdot (v\otimes v))-2\nabla N\ast(\nabla\varphi\cdot (a\otimes v))-2\nabla N\ast(\nabla\varphi\cdot (v\otimes a))\\
&-N\ast(\nabla^2\varphi:(v\otimes v))-N\ast(\nabla^2\varphi:(a\otimes v))-N\ast(\nabla^2\varphi:(v\otimes a))\\
&-2\nabla N\ast((\nabla \varphi)q)-N\ast(q\Delta\varphi)\\
=:\ &q_{1,1}(x.s)+q_{1,2}(x.s)+q_{1,3}(x.s)+q_{2,1}(x,s)+q_{2,2}(x,s)+q_{2,3}(x,s)\\
&+\ldots\, +q_4(x,s)+q_5(x,s),
\end{align*}
where $N(x)=-\frac{1}{4\pi |x|}$. 
We focus on the terms which involve $a$. The other terms are estimated exactly as in \cite{RRS16}. For $q_1$ split between a local part and a nonlocal part as follows: for all $x\in\R^3$ and almost every $s\in (-\rho^2,0)$
\begin{align*}
q_{1,1}(x,s)=\ &-\int\limits_{B_{2r}(0)}\nabla^2N(x-y):(a\otimes v)\varphi\, dy-\int\limits_{\R^3\setminus B_{2r}(0)}\nabla^2N(x-y):(a\otimes v)\varphi\, dy\\
=\ &q_{1,1,loc}(x,s)+q_{1,1,nonloc}(x,s).
\end{align*}
For the local part, we use Calder\'on-Zygmund estimates and obtain
\begin{align*}
\int\limits_{Q_r(0,0)}|q_{1,1,loc}-(q_{1,1,loc})_r(s)|^\frac32dxds\leq\ &Cr\Bigg(\int\limits_{Q_r(0,0)}|q_{1,1,loc}-(q_{1,1,loc})_r(s)|^\frac{15}8dxds\Bigg)^\frac45\\
\leq\ &Cr\|a\|_{L^5(Q_1(0,0))}^\frac32\|v\|_{L^3(Q_{2r}(0,0))}^\frac32,
\end{align*}
which yields the second term in the right hand side of \eqref{e.estpress}. As for the nonlocal pressure, we estimate its gradient as follows for $x\in B_r(0)$ and $s\in(-1,0)$
\begin{equation*}
|\nabla q_{1,1,nonloc}(x,s)|\leq C\int\limits_{B_\rho(0)\setminus B_{2r}(0)}\frac{|av|}{|y|^4}dy,
\end{equation*}
so that
\begin{align*}
\|q_{1,1,nonloc}-(q_{1,1,nonloc})_r(s)\|_{L^\frac32(B_r(0))}\leq Cr^3\int\limits_{B_\rho(0)\setminus B_{2r}(0)}\frac{|av|}{|y|^4}dy
\end{align*}
which gives the fourth term in the right hand side of \eqref{e.estpress} by integrating in time. It remains to see how $q_{2,j}$ and $q_{3,j}$, $j=2,\, 3$, lead to the last term in the right hand side of \eqref{e.estpress}. We have
\begin{align*}
|\nabla q_{i,j}(x,s)|\leq\ &C\rho^{-4}\int\limits_{B_{\rho}(0)\setminus B_{\frac\rho2}(0)}|a||v|dx\\
\leq\ &C\rho^{-\frac{13}5}\|a(\cdot,s)\|_{L^5(B_1(0))}\|v(\cdot,s)\|_{L^3(B_\rho(0))},
\end{align*}
for $i,\, j=2,\, 3$. Hence, 
\begin{align*}
\|q_{i,j}-(q_{i,j})_r(s)\|_{L^\frac32(Q_r(0,0))}^\frac32\leq\ &Cr^\frac92\int\limits_{-r^2}^0\|\nabla q_{i,j}(\cdot,s)\|_{L^\infty(B_r(0))}^\frac32ds\\
\leq\ &Cr^\frac92\rho^{-\frac{39}{10}}\int\limits_{-r^2}^0\|a(\cdot,s)\|_{L^5(B_1(0))}^\frac32\|v(\cdot,s)\|_{L^3(B_\rho(0))}^\frac32ds\\
\leq\ &Cr^{\frac{49}{10}}\rho^{-\frac{39}{10}}\|a\|_{L^5(Q_1(0,0))}^\frac32\|v\|_{L^3(Q_\rho(0,0))}^\frac32
\end{align*}
which concludes the proof.
\end{proof}

With these two lemmas, we can now proceed with the proof of Morrey bound.

\begin{proof}[Proof of Theorem \ref{theo.morrey}]
\noindent{\bf \underline{Step 1:} \eqref{e.ak} for $k=2$.} This is a direct consequence of assumption \eqref{e.epmorrey}. Indeed, $r_2=2^{-2}$ so that
\begin{align*}
&\frac1{r_2^2}\int\limits_{Q_{r_2}(\bar x,t)}|v|^3dxds+\frac1{r_2^{\frac{1+\delta}2}}\int\limits_{Q_{r_2}(\bar x,t)}|q-(q)_{r_2}(s)|^\frac32dxds\\
\leq\ &16\int\limits_{Q_{1/2}(\bar x,t)}|v|^3dxds+16\int\limits_{Q_{1/2}(\bar x,t)}|q|^\frac32dxds\\
\leq\ &16\ep_*=2^{7-\delta}\ep_*^\frac13\ep_*^\frac23r_1^{3-\delta}=D_1(\delta)\ep_*^\frac13\ep_*^\frac23r_1^{3-\delta}\leq \ep_*^\frac23r_1^{3-\delta},
\end{align*}
by our choice of $\ep_*$, see \eqref{e.choiceepstar}.

\noindent{\bf \underline{Step 2:} \eqref{e.ak} for $k=2$ and \eqref{e.locenE} implies \eqref{e.bk} for $k=2$.} We take $\phi=\phi_2$ in the local energy inequality \eqref{e.lei} and bound every term in the right hand side. Then
\begin{align*}
&C_1^{-1}r_2^{-1}\sup_{t-r_2^2<s<t}\int\limits_{B_{r_2}(\bar x)}|v(x,s)|^2dx+C_1^{-1}r_2^{-1}\int\limits_{Q_{r_2}(\bar x,t)}|\nabla v|^2dxds\\
\leq\ &C_1r_2^2\int\limits_{Q_{1/2}(\bar x,t)}|v|^2dxds+\int\limits_{Q_{1/2}(\bar x,t)}|v|^3|\nabla\phi_2|dxds+2\int\limits_{Q_{1/2}(\bar x,t)}|v||q||\nabla\phi_2|dxds\\
&+\int\limits_{Q_{1/2}(\bar x,t)}|a||v||\nabla v||\phi_2|dxds+\int\limits_{Q_{1/2}(\bar x,t)}|a||v|(|\nabla v||\phi_2| +|v||\nabla\phi_2|)dxds\\
\leq\ &C_1r_2^22^{-\frac53}\ep_*^\frac23+3C_12^4\ep_*+C_12^3\|a\|_{L^5}E+C_12^{4-\frac23}\|a\|_{L^5}E\\
\leq\ &C_1(1+2^{10}\ep_*^\frac13+2^9E\ep_*^\frac13)\ep_*^\frac23r_2^{2-\frac23\delta}\\
\leq\ &C_1(1+D_{2}(1+E)\ep_*^\frac13)\ep_*^\frac23r_2^{2-\frac23\delta}\leq 2C_1\ep_*^\frac23r_2^{2-\frac23\delta}\leq C_B\ep_*^\frac23r_2^{2-\frac23\delta},
\end{align*}
where the last line follows from the choice of $\ep_*$ (see \eqref{e.choiceCB}) and $C_B$ (see \eqref{e.choiceepstar}).

\smallskip

Let $n\geq 2$.

\noindent{\bf \underline{Step 3:} \eqref{e.ak} and \eqref{e.bk} for all $2\leq k\leq n$ implies \eqref{e.bk} for $k=n+1$.} Let us first notice that assuming \eqref{e.bk} for $2\leq k<n$ in this argument is only needed in the case $a\neq 0$. Step 3 relies on the local energy inequality \eqref{e.lei} and the use of the test function $\phi_n$ constructed in Lemma \ref{lem.test}. We have
\begin{align*}
&C_1^{-1}r_n^{-1}\sup_{t-r_n^2<s<t}\int\limits_{B_{r_n}(\bar x)}|v(x,s)|^2dx+C_1^{-1}r_n^{-1}\int\limits_{Q_{r_n}(\bar x,t)}|\nabla v|^2dxds\\
\leq\ &C_1r_n^2\int\limits_{Q_{1/2}(\bar x,t)}|v|^2dxds+\int\limits_{Q_{1/2}(\bar x,t)}|v|^3|\nabla\phi_n|dxds+2\left|\int\limits_{Q_{1/2}(\bar x,t)}v\cdot\nabla\phi_n qdxds\right|\\
&+2\int\limits_{Q_{1/2}(\bar x,t)}|a||v||\nabla v||\phi_n|dxds+\int\limits_{Q_{1/2}(\bar x,t)}|a||v|^2|\nabla\phi_n|dxds\\
=\ &I_1+\ldots\, I_5.
\end{align*}
It immediately follows from the smallness hypothesis \eqref{e.epmorrey} that
\begin{equation*}
I_1\leq C_1\ep_*^\frac23r_n^{2-\frac23\delta}\leq \frac{C_B}{10C_1}\ep_*^\frac23r_n^{2-\frac23\delta}.
\end{equation*} 
For the other terms, one decomposes $Q_{1/2}(\bar x,t)$ into the union of $Q_{r_n}(\bar x,t)$ and the annuli $Q_{r_{k-1}}(\bar x,t)\setminus Q_{r_k}(\bar x,t)$ for $2\leq k\leq n$. The second and third terms are treated in a standard way. We have using \eqref{e.ak} for $2\leq k\leq n$,
\begin{align*}
I_2\leq\ &C_1r_n^{-2}\int\limits_{Q_{r_n}(\bar x,t)}|v|^3dxds+C_1r_n^2\sum_{k=2}^nr_k^{-4}\int\limits_{Q_{r_{k-1}}(\bar x,t)\setminus Q_{r_k}(\bar x,t)}|v|^3dxds\\
\leq\ &C_1\ep_*^\frac23r_n^{3-\delta}+C_1\ep_*^\frac23r_n^2\sum_{k=3}^nr_k^{-4}r_{k-1}^{5-\delta}+C_1r_n^2r_2^{-4}\int\limits_{Q_{1/2}(\bar x,t)}|v|^3dxds\\
\leq\ &C_1\Big(1+\frac{2^{2(1+\delta)}}{1-2^{-(1-\delta)}}+2^{8}\Big)\ep_*^\frac23r_n^{2-\frac23\delta}\leq  \frac{C_B}{10C_1}\ep_*^\frac23r_n^{2-\frac23\delta}.
\end{align*}
The term $I_3$ requires more care. The idea is to write it as a telescoping series. Indeed, one needs to substract the mean of the pressure on $Q_{r_{k-1}}(\bar x,t)$. In order to do this, we introduce cut-off functions $\chi_k\in C^\infty_c(\R^3\times(-\infty,s))$ for $k\geq 1$ such that $0\leq\chi_k\leq 1$, $\chi_k\equiv 1$ on $Q_{7r_k/8}(\bar x,t)$ and $\supp\chi_k\cap \bar Q_1(\bar x,t)\subset \bar Q_{r_k}(\bar x,t)$. We then have
\begin{align*}
I_3=\ &2\sum_{k=2}^n\int\limits_{Q_{r_{k-1}}}(q-(q)_{r_{k-1}}(s))v\cdot\nabla((\chi_{k-1}-\chi_k)\phi_n)dxds\\
&+2\int\limits_{Q_{r_n}}(q-(q)_{r_n}(s))v\cdot\nabla(\chi_{n}\phi_n)dxds\\
\leq\ &2^{11}C_1r_n^2\sum_{k=2}^nr_{k-1}^{-4}\int\limits_{Q_{r_{k-1}}}|q-(q)_{r_{k-1}}(s)||v|dxds+2^6C_1r_n^{-2}\int\limits_{Q_{r_n}}|q-(q)_{r_n}(s)||v|dxds\\
\leq\ &2^{11}C_1\ep_*^\frac23r_n^2\sum_{k=2}^nr_{k-1}^{-\frac{2}3\delta}+2^6C_1\ep_*^\frac23r_n^{2-\frac{2}3\delta}\\
\leq\ &C_1\Big(\frac{2^{11}}{1-2^{-\frac{2}3\delta}}+2^6\Big)\ep_*^\frac23r_n^{2-\frac{2}3\delta}\leq  \frac{C_B}{10C_1}\ep_*^\frac23r_n^{2-\frac{2}3\delta}.
\end{align*}
In the above calculation we have used the bounds:
$$\|\nabla((\chi_{k}-\chi_{k+1})\phi_{n})\|_{L^{\infty}(Q_{r_n})}\leq 2^{10}C_{1}r_{n}^{2}r_{k}^{-4}\,\,\,\,\,\,\,\textrm{and}\,\,\,\,\,\,\, \|\nabla(\chi_{n}\phi_{n}))\|_{L^{\infty}(Q_{r_{n}})}\leq 17C_{1}r_{n}^{-2}. $$ We refer the reader to p.295 of \cite{RRS16}. As for $I_4$ and $I_5$ we have
\begin{align*}
I_4\leq\ &2C_1r_n^2\sum_{k=2}^nr_k^{-3}\int\limits_{Q_{r_{k-1}}(\bar x,t)\setminus Q_{r_k}(\bar x,t)}|a||v||\nabla v|dxds\\
&+2C_1r_n^{-1}\int\limits_{Q_{r_n}(\bar x,t)
}|a||v||\nabla v|dxds\\
\leq\ &2C_1r_n^2\|a\|_{L^5(Q_{1})}\sum_{k=1}^nr_{k+1}^{-3}\Bigg(\int\limits_{Q_{r_{k}}(\bar x,t)}|v|^\frac{10}3dxds\Bigg)^\frac3{10}\Bigg(\int\limits_{Q_{r_{k}}(\bar x,t)}|\nabla v|^2dxds\Bigg)^\frac12\\
\leq\ &2C_1C_B\ep_*^{\frac23+1}r_n^2\sum_{k=1}^nr_{k+1}^{-3}r_k^{3-\frac23\delta}\\
\leq\ &\frac{2^4}{1-2^{-\frac23\delta}}C_1C_B\ep_*^{\frac23+1}r_n^{2-\frac23\delta}=D_3(\delta)\ep_*C_B\ep_*^{\frac23}r_n^{2-\frac23\delta}\leq  \frac{C_B}{10C_1}\ep_*^{\frac23}r_n^{2-\frac23\delta}
\end{align*}
and
\begin{align*}
I_5\leq\ &C_1r_n^2\|a\|_{L^5(Q_{1}(0,0))}\sum_{k=1}^nr_{k+1}^{-4}r_k^\frac23\Bigg(\int\limits_{Q_{r_{k}}(\bar x,t)}|v|^3dxds\Bigg)^\frac23\\
\leq\ &C_1\ep_*^{1+\frac49}r_n^2\sum_{k=1}^nr_{k+1}^{-4}r_k^{4-\frac23\delta}\\
\leq\ &\frac{2^4}{1-2^{-\frac23\delta}}C_1\ep_*^{1+\frac49}r_n^{2-\frac23\delta}=D_4(\delta)\ep_*^\frac79\ep_*^\frac23r_n^{2-\frac23\delta}\leq  \frac{C_B}{10C_1}\ep_*^\frac23r_n^{2-\frac23\delta}.
\end{align*}
These estimates imply \eqref{e.bk} for $k=n+1$. Indeed, using the lower bound for $\phi_n$ on $Q_{r_n}(\bar x,t)$ and the fact that $Q_{r_{n+1}}(\bar x,t)\subset Q_{r_n}(\bar x,t)$, we obtain
\begin{align*}
&r_{n+1}^{-1}\sup_{t-r_{n+1}^2<s<t}\int\limits_{B_{r_{n+1}}(\bar x)}|v(x,s)|^2dx+r_{n+1}^{-1}\int\limits_{Q_{r_{n+1}}(\bar x,t)}|\nabla v|^2dxds\\
\leq\ &2r_{n}^{-1}\sup_{t-r_{n}^2<s<t}\int\limits_{B_{r_{n}}(\bar x)}|v(x,s)|^2dx+2r_{n}^{-1}\int\limits_{Q_{r_{n}}(\bar x,t)}|\nabla v|^2dxds\\
\leq\ &2C_1(I_1+\ldots\, I_5)\\
\leq\ &C_B\ep_*^\frac23r_{n+1}^{2-\frac23\delta},
\end{align*}
which is the result.

\noindent{\bf \underline{Step 4:} \eqref{e.bk} for $2\leq k\leq n+1$ implies \eqref{e.ak} for $k=n+1$.} First, by interpolation, we ea\-sily get that there exists  a universal constant $C_3\in(0,\infty)$ such that
\begin{align*}
&r_{n+1}^{-2}\int\limits_{Q_{r_n}(\bar x,t)}|v|^3dxds\\
\leq\ & C_3\Bigg(r_{n+1}^{-1}\sup_{t-r_{n+1}^2<s<t}\int\limits_{B_{r_{n+1}}(\bar x)}|v(x,s)|^2dx+r_{n+1}^{-1}\int\limits_{Q_{r_{n+1}}(\bar x,t)}|\nabla v|^2dxds\Bigg)^\frac32.
\end{align*}
Therefore, by \eqref{e.ak} for $k=n+1$,
\begin{equation*}
r_{n+1}^{-2}\int\limits_{Q_{r_{n+1}}(\bar x,t)}|v|^3dxds\leq C_3C_B^\frac32\ep_*r_{n+1}^{3-\delta}
\leq\frac12\ep_*^\frac23r_{n+1}^{3-\delta}.
\end{equation*}
The control of the pressure part is more difficult. We rely on Lemma \ref{lem.pressure}. Hence, taking $r:=r_{n+1}$ and $\rho=\frac14$, we have
\begin{align*}
&r_{n+1}^{-\frac{1+\delta}2}\int\limits_{Q_{r_{n+1}(\bar x,t)}}|q-(q)_{r_{n+1}}(s)|^\frac32dxds\\
\leq\ & C_2r_{n+1}^{-\frac{1+\delta}2}\int\limits_{Q_{{r_{n}(\bar x,t)}}}|v|^3dxds+C_2r_{n+1}^{\frac{1-\delta}2}\Bigg(\int\limits_{Q_{r_{n}(\bar x,t)}}|v|^3dxds\Bigg)^\frac12\Bigg(\int\limits_{Q_{r_n}(\bar x,t)}|a|^5dxds\Bigg)^\frac3{10}\\
&+C_2r_{n+1}^{6-\frac\delta2}\Bigg(\sup_{t-{r_{n+1}^2}<s<t}\int\limits_{r_n<|x-\bar x|<\frac14}\frac{|v(x,s)|^2}{|x|^4}dx\Bigg)^\frac32\\
&+C_2r_{n+1}^{4-\frac\delta2}\int\limits_{t-{r_{n+1}^2}}^t\Bigg(\int\limits_{r_n<|x-\bar x|<\frac14}\frac{|v(x,s)a(x,s)|}{|x|^4}dx\Bigg)^\frac32ds\\
&+2^{9}C_2r_{n+1}^{4-\frac\delta2}\int\limits_{Q_\frac14(\bar x,t)}|v|^3+|q|^\frac32dxds\\
&+2^{\frac{39}{5}}C_2r_{n+1}^\frac{44-5\delta}{10}\Bigg(\int\limits_{Q_\frac14(\bar x,t)}|v|^3dxds\Bigg)^\frac12\Bigg(\int\limits_{Q_\frac14(\bar x,t)}|a|^5dxds\Bigg)^\frac3{10}\\
=\ &J_1+\ldots\ J_6.
\end{align*}
We now estimate the right hand side term by term. We have 
\begin{equation*}
J_1\leq 2^{5-\delta}C_2C_3C_B^\frac32\ep_*r_{n+1}^\frac{9-\delta}2=D_5(\delta)\ep_*^\frac13\ep_*^\frac23r_{n+1}^3\leq \frac1{12}\ep_*^\frac23r_{n+1}^3.
\end{equation*}
For $J_2$, we have
\begin{equation*}
J_2\leq 2^{\frac{5-\delta}{2}}C_2C_3^\frac12C_B^\frac34\ep_*^\frac45r_{n+1}^{3-\delta}=D_6(\delta)\ep_*^\frac2{15}\ep_*^\frac23r_{n+1}^{3-\delta}\leq \frac1{12}\ep_*^\frac23r_{n+1}^{3-\delta}.
\end{equation*}
For $J_3$, we decompose into rings: we have
\begin{align*}
\sup_{t-{r_{n+1}^2}<s<t}\int\limits_{r_n<|x-\bar x|<\frac14}\frac{|v(x,s)|^2}{|x|^4}dx
\leq\ &\sum_{k=2}^{n-1}\sup_{t-{r_{k}^2}<s<t}\int\limits_{r_{k+1}<|x-\bar x|<r_k}\frac{|v(x,s)|^2}{|x|^4}dx\\
\leq\ &\sum_{k=2}^{n-1}r_{k+1}^{-4}C_B\ep_*^\frac23r_k^{3-\frac23\delta}\\
\leq\ &2^{3-\frac23\delta}C_B\ep_*^\frac23\sum_{k=2}^{n-1}r_{k+1}^{-1-\frac23\delta}\\
\leq\ &\frac{2^{3-\frac23\delta}}{2^{1+\frac23\delta}-1}C_B\ep_*^\frac23r_{n+1}^{-1-\frac23\delta},
\end{align*}
so that
\begin{equation*}
J_3\leq C_2\left(\frac{2^{3-\frac23\delta}}{2^{1+\frac23\delta}-1}\right)^\frac32C_B^\frac32\ep_*r_{n+1}^{\frac{9-3\delta}{2}}\leq D_7(\delta)\ep_*^\frac13\ep_*^\frac23r_{n+1}^{3-\delta}\leq \frac1{12}\ep_*^\frac23r_{n+1}^{3-\delta}.
\end{equation*}
For $J_4$, we have,
\begin{align*}
&\int\limits_{t-{r_{n+1}^2}}^t\Bigg(\int\limits_{r_n<|x-\bar x|<\frac14}\frac{|v(x,s)a(x,s)|}{|x|^4}dx\Bigg)^\frac32ds\\
\leq\ &C\int\limits_{t-{r_{n+1}^2}}^t\Bigg(\sum_{k=2}^{n-1}\int\limits_{r_{k+1}<|x-\bar x|<r_k}\frac{|v(x,s)a(x,s)|}{|x|^4}dx\Bigg)^\frac32ds\\
\leq\ &C\int\limits_{t-{r_{n+1}^2}}^t\Bigg(\sum_{k=2}^{n-1}r_{k+1}^{-4}\int\limits_{r_{k+1}<|x-\bar x|<r_k}|v(x,s)a(x,s)|dx\Bigg)^\frac32ds\\
\leq\ &C\int\limits_{t-{r_{n+1}^2}}^t\!\!\left(\sum_{k=2}^{n-1}r_{k+1}^{-4+\frac9{10}}\Bigg(\int\limits_{r_{k+1}<|x-\bar x|<r_k}|v(x,s)|^2dx\Bigg)^\frac12\!\!\!\Bigg(\int\limits_{
|x-\bar x|<r_k}|a(x,s)|^5dx\Bigg)^\frac1{5}\right)^\frac32\\
\leq\ &C\sup_{t-r_{n+1}^2<s<t}\left(\sum_{k=2}^{n-1}r_{k+1}^{-4+\frac9{10}}\big(\ep_*^\frac23r_{k}^{3-\frac23\delta}\big)^\frac12\right)^\frac32
\int\limits_{t-{r_{n+1}^2}}^t\Bigg(\int\limits_{B_1(0)}|a(x,s)|^5dx\Bigg)^\frac3{10}ds\\
\leq\ &C\left(\sum_{k=2}^{n-1}r_{k+1}^{-4+\frac9{10}}\big(\ep_*^\frac23r_{k}^{3-\frac23\delta}\big)^\frac12\right)^\frac32\ep_*^\frac32r_{n+1}^\frac75.
\end{align*}
This yields, 
\begin{align*}
J_4\leq\ &CC_2r_{n+1}^{4-\frac\delta2}\ep_*^2\left(\sum_{k=2}^{n-1}r_{k+1}^{-4+\frac9{10}}r_{k}^{\frac32-\frac\delta3}\right)^\frac32r_{n+1}^\frac75\\
\leq\ &2^{\frac94-\frac\delta2}CC_2\ep_*^2r_{n+1}^{4+\frac75-\frac\delta2}\left(\sum_{k=2}^{n-1}r_{k+1}^{-\frac85-\frac\delta3}\right)^\frac32\\
\leq\ &2^{\frac94-\frac\delta2}(2^{\frac85+\frac\delta3}-1)^{-\frac32}CC_2\ep_*^2r_{n+1}^{3-\delta}=D_8(\delta)\ep_*^\frac43\ep_*^\frac23r_{n+1}^{3-\delta}
\leq\frac1{12}\ep_*^\frac23r_{n+1}^{3-\delta}
\end{align*} 
The control of $J_5$ and $J_6$ is straightforward as the quantities for $v$ and $q$ are on the large scale cylinder $Q_{\frac14}(\bar x,t)$. This implies \eqref{e.ak} for $k=n+1$ and hence concludes the proof of Theorem \ref{theo.morrey}.
\end{proof}

We see in this argument, see in particular the control of $J_2$ and $J_4$ above, that the assumption $a\in L^5(Q_1)$ is critical to get the propagation of the Morrey-type bound.

\section{Local space-time boundedness near initial time}
\label{sec.boundedness}

Throughout this section, we define
$$L(f)(x,t):= \int\limits_{0}^{t} e^{(t-s)\Delta} f(\cdot,s) ds$$
and
 $$L(\nabla\cdot F):=\int\limits_{0}^{t} \partial_{j}e^{(t-s)\Delta} F_{j}(\cdot,s) ds.  $$
 Here,
 \begin{equation}\label{wholespaceheatkernel}
 e^{t\Delta}:= \frac{1}{(4\pi t)^{\frac{3}{2}}} \exp\Big(-\frac{|x|^2}{4t}\Big)\ast.
 \end{equation}
 Moreover, $(\mathcal R_i)_{i\in\{1,\ldots\, 3\}}$ denote the Riesz transforms.
\subsection{Morrey space preliminaries}
\begin{proposition}\label{Morreyextension}
Suppose $f\in L^{1}(Q_{1})$ and that there exists $0<\delta<5$ such that
\begin{equation}\label{Morreysmallscales}
\|f\|_{\delta, \bar{Q}_{\frac{1}{2}}(0,0)}:=\sup_{\stackrel{0<r\leq\frac{1}{4},}{(\bar{x}, t)\in \bar{Q}_{\frac{1}{2}}(0,0)}} r^{\delta-5}\int\limits_{Q_{r}(\bar{x}, t)} |f|dxdt<\infty. 
\end{equation}
Then $\tilde{f}= \chi_{Q_{\frac{1}{2}}(0,0)} f$ is such that
\begin{equation}\label{extendedfunctionMorrey}
\sup_{0<r,\, (\bar{x}, t)\in \mathbb{R}^4} r^{\delta-5}\int\limits_{Q_{r}(\bar{x}, t)} |\tilde{f}|dxdt\leq C(\delta, \|f\|_{L^{1}(Q_{1})},\|f\|_{\delta, \bar{Q}_{\frac{1}{2}}(0,0)}). 
\end{equation}
Furthermore,
\begin{equation}\label{extendedMorreynotbackward}
\sup_{0<r,\, (\bar{x}, t)\in \mathbb{R}^4} r^{\delta-5}\int\limits_{t-r^2}^{t+r^2}\int\limits_{B_{r}(\bar{x}, t)} |\tilde{f}|dxdt\leq C(\delta, \|f\|_{L^{1}(Q_{1})},\|f\|_{\delta, \bar{Q}_{\frac{1}{2}}(0,0)}).
\end{equation}

\end{proposition}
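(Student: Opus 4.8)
The statement is a routine ``localization/extension'' result for parabolic Morrey spaces, so the plan is to reduce the global Morrey bound for $\tilde f = \chi_{Q_{1/2}(0,0)} f$ to the two given pieces of information: the \emph{small-scale} control \eqref{Morreysmallscales} on parabolic cylinders centered in $\bar Q_{1/2}(0,0)$, and the \emph{crude global} bound $\|f\|_{L^1(Q_1)}$. The key observation is that $\tilde f$ is supported in the fixed compact set $\bar Q_{1/2}(0,0)$, so all the difficulty is encoding the trade-off between ``small cylinders'' (handled by hypothesis) and ``large cylinders'' (handled by the $L^1$ bound together with the fact that $\delta - 5 < 0$, so the prefactor $r^{\delta-5}$ is bounded below on any fixed range of radii bounded away from $0$).

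\textbf{Main steps.} First I would fix $r>0$ and $(\bar x, t)\in\R^4$ and split into cases. \emph{Case 1: $r \le 1/4$.} If $Q_r(\bar x, t)$ does not meet $\supp\tilde f \subset \bar Q_{1/2}(0,0)$ the integral is zero, so assume it does; then one can choose a point $(\bar x', t') \in \bar Q_{1/2}(0,0)$ within parabolic distance $\lesssim r$ of $(\bar x, t)$, so that $Q_r(\bar x, t) \subset Q_{Cr}(\bar x', t')$ for a universal $C$ (e.g. $C = 3$), and since $Cr$ may exceed $1/4$ one also intersects with $Q_{1/4}(\bar x', t')$; in any case $Q_r(\bar x,t) \cap \supp\tilde f \subset Q_{\rho}(\bar x', t')$ with $\rho := \min(Cr, 1/4)$ and $(\bar x',t')\in\bar Q_{1/2}(0,0)$. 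Hence
\begin{equation*}
r^{\delta-5}\int\limits_{Q_r(\bar x,t)}|\tilde f|\,dxdt \le r^{\delta-5}\int\limits_{Q_{\rho}(\bar x', t')}|f|\,dxdt \le r^{\delta-5}\rho^{5-\delta}\|f\|_{\delta,\bar Q_{1/2}(0,0)} \le C^{5-\delta}\|f\|_{\delta,\bar Q_{1/2}(0,0)},
\end{equation*}
using $\rho \le Cr$ and $5-\delta>0$. \emph{Case 2: $r \ge 1/4$.} Here I would simply bound crudely:
\begin{equation*}
r^{\delta-5}\int\limits_{Q_r(\bar x,t)}|\tilde f|\,dxdt \le r^{\delta-5}\int\limits_{Q_1(0,0)}|f|\,dxdt \le \Big(\tfrac14\Big)^{\delta-5}\|f\|_{L^1(Q_1)},
\end{equation*}
where the last inequality uses $\delta - 5 < 0$ so that $r\mapsto r^{\delta-5}$ is decreasing and is maximized at $r=1/4$. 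Combining the two cases gives \eqref{extendedfunctionMorrey} with constant $C = \max(C^{5-\delta}\|f\|_{\delta,\bar Q_{1/2}(0,0)}, 4^{5-\delta}\|f\|_{L^1(Q_1)})$, depending only on $\delta$, $\|f\|_{L^1(Q_1)}$, and $\|f\|_{\delta,\bar Q_{1/2}(0,0)}$.

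\textbf{The non-backward version.} For \eqref{extendedMorreynotbackward} one writes the symmetric time interval $(t-r^2, t+r^2)$ as the union of the two backward cylinders $Q_r(\bar x, t)$ and $Q_r(\bar x, t+r^2)$, so that
\begin{equation*}
\int\limits_{t-r^2}^{t+r^2}\int\limits_{B_r(\bar x)}|\tilde f|\,dxds \le \int\limits_{Q_r(\bar x,t)}|\tilde f|\,dxds + \int\limits_{Q_r(\bar x, t+r^2)}|\tilde f|\,dxds,
\end{equation*}
and each term on the right is controlled by \eqref{extendedfunctionMorrey} after multiplying by $r^{\delta-5}$; the constant only doubles.

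\textbf{Expected obstacle.} There is no real obstacle here — the only point requiring a little care is the geometric covering argument in Case 1 (ensuring a cylinder meeting $\supp\tilde f$ is contained in a comparable cylinder \emph{centered in} $\bar Q_{1/2}(0,0)$, possibly after truncating the radius at $1/4$), and tracking that the truncation $\rho = \min(Cr,1/4)$ still satisfies $\rho \le Cr$ so the scaling exponent cancellation works. The decreasing monotonicity of $r^{\delta-5}$ used in Case 2 is exactly why the hypothesis $\delta < 5$ is needed.
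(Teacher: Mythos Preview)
The paper states this proposition without proof (it passes directly to Proposition~\ref{Morreygradheatbounded}), treating it as routine; your plan is exactly the standard two-case argument one would supply, and the reduction of \eqref{extendedMorreynotbackward} to \eqref{extendedfunctionMorrey} by splitting the symmetric time interval into two backward cylinders is correct.

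There is one small imprecision in your Case~1. You assert that when $Cr>1/4$ one still has $Q_r(\bar x,t)\cap\supp\tilde f\subset Q_\rho(\bar x',t')$ with $\rho=\min(Cr,1/4)=1/4$, but this containment is not automatic: for $r$ close to $1/4$ and $\bar x$ just outside $\bar B_{1/2}(0)$, the intersection $B_r(\bar x)\cap\bar B_{1/2}(0)$ need not fit inside any ball of radius $1/4$ centered in $\bar B_{1/2}(0)$. The cleanest fix is simply to move the threshold between the two cases: run Case~1 for $r\le 1/(4C)$ (so that $Cr\le 1/4$ and no truncation is needed), and absorb the range $1/(4C)\le r\le 1/4$ into Case~2, where the crude $L^1$ bound applies verbatim with the constant $(4C)^{5-\delta}\|f\|_{L^1(Q_1)}$ in place of $4^{5-\delta}\|f\|_{L^1(Q_1)}$. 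With this adjustment your argument is complete.
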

\begin{proposition}\label{Morreygradheatbounded}
Suppose \begin{equation}\label{supportg}
\supp g\subset B_{\frac{1}{2}}(0)\times (-\tfrac{1}{4}, \tfrac{1}{4}),
\end{equation}
\begin{equation}\label{gintegrable}
\int\limits_{-\frac{1}{4}}^{\frac{1}{4}}\int\limits_{B_{\frac{1}{2}}(0)} |g| dyds<\infty
\end{equation}
 and that there exists $\delta\in (0,1)$ such that 
\begin{equation}\label{gMorrey}
\|g\|_{\delta}:=\sup_{0<r,\, ({\bar x}, t)\in \mathbb{R}^4} r^{\delta-5}\int\limits_{t-r^2}^{t+r^2}\int\limits_{B_{r}(\bar x)} |g|dxdt\, < \infty.
\end{equation}
Then, under the above assumptions we have
\begin{equation}\label{gheatkernelderivativebounded}
\sup_{(x,t)\in\mathbb{R}^4} \int\limits_{-\infty}^{\infty} \int\limits_{\mathbb{R}^3} \frac{|g(y,s)|}{(|x-y|^2+ |t-s|)^2} dyds\leq \max{\Big(C(\delta)\|g\|_{\delta},16\int\limits_{-\frac{1}{4}}^{\frac{1}{4}}\int\limits_{B_{\frac{1}{2}}(0)} |g| dyds \Big)}.
\end{equation}
\begin{proof}

\noindent{\bf \underline{Case 1:} $(x,t)\in B_{1}(0) \times (-1,1)$.}\\
The proof of this case is along the lines of \cite{OLeary} (see also \cite{Kukavicamorrey}-\cite{Kukavicamorreyforce}). The difference is that we exploit the compact support of $g$ to control the integral at large distances. Clearly, $B_{\frac{1}{2}}(0) \times(-\frac{1}{4}, \frac{1}{4})\subset B_{2}(x)\times (t-4, t+4)$.
Thus, \begin{equation}\label{splitintoannuli}
\int\limits_{-\infty}^{\infty} \int\limits_{\mathbb{R}^3} \frac{|g(y,s)|}{(|x-y|^2+ |t-s|)^2} dyds= \sum_{k=0}^{\infty} \int\limits_{A_{k}} \frac{|g(y,s)|}{(|x-y|^2+ |t-s|)^2} dyds
\end{equation}
with
$$
A_{k}:=\{(y,s): 8^{-k}< |x-y|^2+|t-s|< 8^{1-k}\}.
$$
With this and (\ref{gMorrey}) we have
\begin{align}
\label{gannuliest}
\int\limits_{A_{k}} \frac{|g(y,s)|}{(|x-y|^2+ |t-s|)^2} dyds\leq\ & 8^{2k} \int\limits_{t-8^{1-k}}^{t+8^{1-k}} \int\limits_{B_{\sqrt{8^{1-k}}}(x)} |g(y,s)| dyds\nonumber\\
\leq\ & 8^{\frac{(\delta-1)k}{2}+\frac{5-\delta}{2}}\|g\|_{\delta}.
\end{align}
From (\ref{splitintoannuli}) and (\ref{gannuliest}) (and using the fact that $\delta\in (0,1)$), we conclude
$$
\sup_{(x,t)\in B_{1}(0)\times (-1,1)} \int\limits_{-\infty}^{\infty} \int\limits_{\mathbb{R}^3} \frac{|g(y,s)|}{(|x-y|^2+ |t-s|)^2} dyds\leq \frac{8^{\frac{5-\delta}{2}}\|g\|_{\delta}}{1- 8^{\frac{\delta-1}{2}}}.$$

\noindent{\bf \underline{Case 2:} $(x,t)\in \mathbb{R}^4 \setminus(B_{1}(0)\times (-1,1))$.}\\
In this case, $\inf_{(y,s)\in B_{\frac{1}{2}}(0)\times (-\frac{1}{4}, \frac{1}{4})} (|x-y|^2+|t-s|)\geq \frac{1}{4}$, which implies
$$ \int\limits_{-\infty}^{\infty} \int\limits_{\mathbb{R}^3} \frac{|g(y,s)|}{(|x-y|^2+ |t-s|)^2} dyds\leq 16 \int\limits_{-\frac{1}{4}}^{\frac{1}{4}}\int\limits_{B_{\frac{1}{2}}(0)} |g| dyds.$$
This concludes the proof.
\end{proof}

\end{proposition}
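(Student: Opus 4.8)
The plan is to bound the convolution integral in \eqref{gheatkernelderivativebounded} by a dyadic decomposition of the integration region according to the parabolic distance $|x-y|^2+|t-s|$ from the evaluation point $(x,t)$, exploiting that the Morrey-type norm $\|g\|_\delta$ in \eqref{gMorrey} is taken over \emph{two-sided} parabolic cylinders $B_r(\bar x)\times(t-r^2,t+r^2)$. Heuristically, on the shell where $|x-y|^2+|t-s|$ is of size $\rho^2$ the kernel $(|x-y|^2+|t-s|)^{-2}$ is of size $\rho^{-4}$, while that shell is contained in a two-sided parabolic cylinder of radius $\rho$ about $(x,t)$, on which \eqref{gMorrey} bounds $\int|g|$ by $\|g\|_\delta\,\rho^{5-\delta}$; hence the shell contributes $\lesssim\|g\|_\delta\,\rho^{1-\delta}$, and since $\delta<1$ summing over dyadically small $\rho$ gives a convergent geometric series. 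This handles all $(x,t)$ in a fixed neighbourhood of $\supp g$. For $(x,t)$ far from $\supp g$ the compact-support hypothesis \eqref{supportg} bounds the parabolic distance to $\supp g$ below by a universal constant, so the kernel is bounded there and the estimate follows at once from \eqref{gintegrable}.

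Concretely I would split into two cases. If $(x,t)\notin B_1(0)\times(-1,1)$, then \eqref{supportg} forces $|x-y|^2+|t-s|\geq\tfrac14$ whenever $g(y,s)\neq0$, so the integral is at most $16\int_{-1/4}^{1/4}\int_{B_{1/2}(0)}|g|\,dy\,ds$, which appears on the right-hand side of \eqref{gheatkernelderivativebounded}. If $(x,t)\in B_1(0)\times(-1,1)$, I would write $\R^4=\bigcup_{k\geq0}A_k$ with $A_k:=\{8^{-k}<|x-y|^2+|t-s|<8^{1-k}\}$ (note $\supp g\subset B_{\sqrt8}(x)\times(t-8,t+8)$ already), then use $(|x-y|^2+|t-s|)^{-2}\leq 8^{2k}$ on $A_k$ together with $A_k\subset B_{\sqrt{8^{1-k}}}(x)\times(t-8^{1-k},t+8^{1-k})$ and \eqref{gMorrey} with $r=\sqrt{8^{1-k}}$ to obtain
\[
\int_{A_k}\frac{|g(y,s)|\,dy\,ds}{(|x-y|^2+|t-s|)^2}\ \leq\ 8^{2k}\,\|g\|_\delta\,\big(8^{1-k}\big)^{\frac{5-\delta}{2}}\ =\ 8^{\frac{5-\delta}{2}}\,\|g\|_\delta\,8^{\frac{(\delta-1)k}{2}},
\]
and sum the geometric series $\sum_{k\geq0}8^{(\delta-1)k/2}=(1-8^{(\delta-1)/2})^{-1}$, which is finite because $\delta<1$. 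This gives the bound with $C(\delta)=8^{(5-\delta)/2}(1-8^{(\delta-1)/2})^{-1}$, and taking the maximum over the two cases yields \eqref{gheatkernelderivativebounded}.

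I do not expect a real obstacle; the points needing care are arithmetic rather than conceptual. The hypothesis $\delta\in(0,1)$ is used precisely so that the per-shell gain $8^{(\delta-1)k/2}$ is summable at small scales; for $\delta\geq1$ the scheme breaks down. The compact-support hypothesis \eqref{supportg} is equally essential and enters only in the far-field case, since the Morrey bound is scale-critical and gives no decay at large parabolic distances, so one genuinely needs $\supp g$ to be a fixed bounded set together with the $L^1$ bound \eqref{gintegrable}. Lastly, the fact that \eqref{gMorrey} is stated with two-sided cylinders is what makes the shell estimate immediate, each $A_k$ being symmetric in time about $(x,t)$; with only backward cylinders one would cover each $A_k$ by an $O(1)$ number of backward cylinders of comparable radius, affecting only the constant.
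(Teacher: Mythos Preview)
Your proposal is correct and follows essentially the same approach as the paper: the same split into $(x,t)\in B_1(0)\times(-1,1)$ versus its complement, the same dyadic annuli $A_k=\{8^{-k}<|x-y|^2+|t-s|<8^{1-k}\}$, the same per-shell bound via the two-sided Morrey norm, and the same far-field bound via compact support. You even recover the identical explicit constant $C(\delta)=8^{(5-\delta)/2}(1-8^{(\delta-1)/2})^{-1}$.
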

\begin{corollary}\label{MorreyboundsOseen}
Let $\varphi \in C_{0}^{\infty}(B_{\frac{1}{2}}(0))$ with $0\leq\varphi\leq 1$ and $\varphi=1$ on $B_{\frac{1}{3}}(0)$. Suppose that $v\in L^{3}(B_1(0)\times (-1,1))$ and there exists $S^*\in (0,\frac{1}{4})$ and $\delta\in (0,\frac{3}{2})$ such that:
\begin{equation}\label{MorreyvL3}
\||v|^3\|_{\delta,\bar{Q}_{\frac{1}{2}}(0,S^*)}:=\sup_{\stackrel{0<r\leq\frac{1}{4},}{(\bar{x}, t)\in \bar{Q}_{\frac{1}{2}}(0,S^*)}} r^{\delta-5}\int\limits_{Q_{r}(\bar{x}, t)} |v|^3dxdt<\infty.
\end{equation} 
Then,
\begin{equation}\label{heatdivMorreybounded}
\|L(\nabla\cdot(\varphi v\otimes v))\|_{L^{\infty}(\mathbb{R}^3\times (0,S^*))}\leq C(\delta, \||v|^3\|_{\delta,\bar{Q}_{\frac{1}{2}}(0,S^*)})
\end{equation}
\begin{equation}\label{OseenMorreybounded}
\|L(\nabla\mathcal{R}_{i}\mathcal{R}_{j}(\varphi v\otimes v))\|_{L^{\infty}(\mathbb{R}^3\times (0,S^*))}\leq C(\delta, \||v|^3\|_{\delta,\bar{Q}_{\frac{1}{2}}(0,S^*)}).
\end{equation}
\end{corollary}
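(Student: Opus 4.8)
The plan is to reduce both bounds to the single pointwise estimate of Proposition~\ref{Morreygradheatbounded}. First I would recall the classical pointwise bounds on the convolution kernels involved: for $\tau>0$, $z\in\R^3$, $i,j\in\{1,\ldots,3\}$,
\[
\big|\nabla e^{\tau\Delta}(z)\big|+\big|\nabla\mathcal R_i\mathcal R_j e^{\tau\Delta}(z)\big|\le C\,(|z|^2+\tau)^{-2},
\]
the first being elementary from the explicit Gaussian \eqref{wholespaceheatkernel} and the second the standard estimate on (a component of) the gradient of the Oseen tensor. Writing $F:=\varphi\, v\otimes v$, this yields, for every $(x,t)\in\R^3\times(0,S^*)$ and every $i,j$,
\[
\big|L(\nabla\cdot F)(x,t)\big|+\big|L(\nabla\mathcal R_i\mathcal R_j F)(x,t)\big|\le C\int\limits_0^t\int\limits_{\R^3}\frac{|F(y,s)|}{(|x-y|^2+|t-s|)^2}\,dy\,ds,
\]
so it suffices to bound the right-hand side uniformly for $(x,t)\in\R^3\times(0,S^*)$.

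Next I would convert the hypothesis \eqref{MorreyvL3} into a \emph{subcritical} Morrey bound on $F$ of the type required by Propositions~\ref{Morreyextension}--\ref{Morreygradheatbounded}. Since $|F|\le|v|^2$ and $|Q_r|\sim r^5$, Hölder's inequality gives, for all $(\bar x,t)\in\bar Q_{1/2}(0,S^*)$ and $r\in(0,\tfrac14]$,
\[
\int\limits_{Q_r(\bar x,t)}|F|\,dy\,ds\le |Q_r|^{1/3}\Bigl(\int\limits_{Q_r(\bar x,t)}|v|^3\,dy\,ds\Bigr)^{2/3}\le C\,\||v|^3\|_{\delta,\bar Q_{1/2}(0,S^*)}^{2/3}\,r^{5-\delta'},\qquad \delta':=\tfrac{2}{3}\delta .
\]
The assumption $\delta\in(0,\tfrac32)$ is precisely what makes $\delta'\in(0,1)$, the admissible range of Proposition~\ref{Morreygradheatbounded}. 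Now set $\tilde F:=\chi_{Q_{1/2}(0,S^*)}F$. Because $\supp\varphi$ is a compact subset of $B_{1/2}(0)$ and $0<S^*<\tfrac14$, one has $\supp\tilde F\subset(\supp\varphi)\times[S^*-\tfrac14,S^*]\subset B_{1/2}(0)\times(-\tfrac14,\tfrac14)$, so $\tilde F$ meets the support hypothesis \eqref{supportg}; and $\tilde F\in L^1(\R^4)$, with $\int_{\R^4}|\tilde F|\le\int_{Q_{1/2}(0,S^*)}|v|^2\le C(\delta)\||v|^3\|_{\delta,\bar Q_{1/2}(0,S^*)}^{2/3}$ by Hölder and a covering of $Q_{1/2}(0,S^*)$ by finitely many radius-$\tfrac14$ cylinders centred in $\bar Q_{1/2}(0,S^*)$. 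Translating time by $S^*$ and applying Proposition~\ref{Morreyextension} to each component of $F(\cdot,\cdot+S^*)$ (whose small-scale Morrey norm at exponent $\delta'$ near $\bar Q_{1/2}(0,0)$ is controlled by the display above) produces the global two-sided bound
\[
\|\tilde F\|_{\delta'}\le C\bigl(\delta,\||v|^3\|_{\delta,\bar Q_{1/2}(0,S^*)}\bigr).
\]

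Finally, for $(x,t)\in\R^3\times(0,S^*)$ only the values $F(\cdot,s)$ with $s\in(0,t)\subset(S^*-\tfrac14,S^*)$ enter $L(\nabla\cdot F)(x,t)$ and $L(\nabla\mathcal R_i\mathcal R_j F)(x,t)$, and there $F=\tilde F$ (as $\chi_{Q_{1/2}(0,S^*)}\equiv 1$ on $(\supp\varphi)\times(0,S^*)$); hence by the first paragraph these two quantities are at most
\[
C\int\limits_0^t\int\limits_{\R^3}\frac{|\tilde F(y,s)|}{(|x-y|^2+|t-s|)^2}\,dy\,ds\le C\int\limits_{-\infty}^{\infty}\int\limits_{\R^3}\frac{|\tilde F(y,s)|}{(|x-y|^2+|t-s|)^2}\,dy\,ds .
\]
Since $\tilde F$ satisfies all the hypotheses of Proposition~\ref{Morreygradheatbounded} with exponent $\delta'\in(0,1)$, the last integral is bounded by $\max\bigl(C(\delta')\|\tilde F\|_{\delta'},\,16\int_{\R^4}|\tilde F|\bigr)\le C(\delta,\||v|^3\|_{\delta,\bar Q_{1/2}(0,S^*)})$, uniformly in $x\in\R^3$ and $t\in(0,S^*)$. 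This gives \eqref{heatdivMorreybounded} and \eqref{OseenMorreybounded}. The one delicate point is the bookkeeping of the time variable in the middle paragraph: one needs the time window of $\tilde F$ to lie simultaneously inside $(-\tfrac14,\tfrac14)$ (for Proposition~\ref{Morreygradheatbounded}) and inside $Q_{1/2}(0,S^*)$, over which \eqref{MorreyvL3} is available (for Proposition~\ref{Morreyextension}) — which is exactly where both $S^*<\tfrac14$ and $S^*>0$ are used — and to observe that $\supp\varphi\subset B_{1/2}(0)$ lets one control every auxiliary $L^1$-quantity by $\||v|^3\|_{\delta,\bar Q_{1/2}(0,S^*)}$; everything else is a direct consequence of the two propositions.
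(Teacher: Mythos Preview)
Your proof is correct and follows essentially the same approach as the paper's own proof, which simply says the result follows from Propositions~\ref{Morreyextension} and~\ref{Morreygradheatbounded} together with the kernel bound $|\nabla e^{t\Delta}|+|e^{t\Delta}\mathbb P\nabla\cdot|\lesssim(|x|^2+t)^{-2}$. You have made explicit the one substantive step the paper leaves implicit, namely the H\"older inequality converting the $|v|^3$ Morrey bound with exponent $\delta\in(0,\tfrac32)$ into an $|F|\le|v|^2$ Morrey bound with exponent $\delta'=\tfrac23\delta\in(0,1)$, which is exactly the admissible range for Proposition~\ref{Morreygradheatbounded}; your handling of the time translation and support conditions is also careful and correct.
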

\begin{proof}
This result follows from the previous two Propositions and the known fact that in $\mathbb{R}^3$ $$\partial_{j}e^{\Delta t}\,\,\,\,\textrm{and}\,\,\,\, e^{\Delta t} \mathbb{P}\nabla \cdot$$ are represented by convolution operators with kernels bounded by $\frac{1}{(|x|^2+t)^2}.$ See \cite{LR02}, for example.
\end{proof}
\subsection{Linear bootstrap arguments}

 In this subsection, we prove the following result.
 \begin{theorem}\label{localboundednesscriticalStokes}
 Suppose  $a$ is a divergence free vector field satisfying $a\in L^{5}(\mathbb{R}^3 \times (0,\infty))$ and $\sup_{0<s} s^{\frac{1}{5}}\|a(\cdot,s)\|_{L^{5}(\mathbb{R}^3)}<\infty$.\\
 Let $v \in C_{w}([-1,1]; L^{2}(B_{2}(0)))$, $\nabla v \in L^{2}(B_{2}(0)\times [-1,1]) $ and $q\in L^{1}(-1,1; L^{1}(B_{2}(0)))$ satisfy (in the distributional sense):
 \begin{equation}\label{stokescriticaldrift}
 \partial_{t} v-{\Delta} v+ a\cdot\nabla v+v\cdot\nabla a+{v\cdot\nabla v}+\nabla q=0,
 \end{equation}
 \begin{equation}\label{divfreezeroinitialdata}
 \nabla\cdot v=0\,\,\,\,\textrm{in}\,\, B_{2}(0)\times (0,1)\,\,\,\,\textrm{and}\,\,\,\,\, v(\cdot,0)=0\,\,\,\,\,\textrm{in}\,\, B_{2}(0).
 \end{equation}
 Furthermore, assume  there exists $\delta\in (0,\frac{3}{2})$ and $S^*\in (0,\frac{1}{4})$ such that \begin{equation}\label{MorreyuL3}
\||v|^3\|_{\delta,\bar{Q}_{\frac{1}{2}}(0,S^*)}:=\sup_{\stackrel{0<r\leq\frac{1}{4},}{(\bar{x}, t)\in \bar{Q}_{\frac{1}{2}}(0,S^*)}} r^{\delta-5}\int\limits_{Q_{r}(\bar{x}, t)} |v|^3dxdt<\infty.
\end{equation} 
 Under the above hypothesis, there is a universal constant $\ep_{**}\in (0,\infty)$ such that if
 \begin{equation}\label{smalldrift}
 \|a\|_{L^{5}(\mathbb{R}^3\times(0,\infty))}+ \sup_{s>0}s^{\frac{1}{5}}\|a(\cdot,s)\|_{L^{5}(\mathbb{R}^3)}\leq\ep_{**}
 \end{equation}
 then $v \in L^{\infty}(B_{\frac{1}{3}}(0) \times (0,S^*)).$
 Furthermore,
\begin{align}\label{uboundedest}
\begin{split}
& \|v\|_{L^{\infty}(B_{\frac{1}{3}}(0)\times (0,S^*))}\\
 &\ \leq C\big(\|q\|_{L^{1}(B_{2}(0)\times (0,1))},\|v\|_{L^{\infty}(0,1; L^{2}(B_{2}(0)))},\|\nabla v\|_{L^{2}(B_{2}(0)\times (0,1)))},\||v|^3\|_{\delta,\bar{Q}_{\frac{1}{2}}(0,S^*)}\big).
\end{split}
\end{align}
If in addition $q\in L^\frac32(B_2(0)\times(-1,1))$, then there exists $\nu\in(0,\frac12)$ such that
\begin{align}\label{e.holdertheo}
\begin{split}
& [v]_{C^{0,\nu}_{par}(B_{\frac{1}{3}}(0)\times [0,S^*])}\\
&\ \leq C\big(\|q\|_{L^\frac32(B_2(0)\times(-1,1))},\|v\|_{L^{\infty}(0,1; L^{2}(B_{2}(0)))},\|\nabla v\|_{L^{2}(B_{2}(0)\times (0,1)))},\||v|^3\|_{\delta,\bar{Q}_{\frac{1}{2}}(0,S^*)}\big).
\end{split}
\end{align}
 \end{theorem}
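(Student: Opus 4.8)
The plan is to localize equation \eqref{stokescriticaldrift}, represent $v$ near the initial time by Duhamel's formula with the heat semigroup together with a local decomposition of the pressure, and then bootstrap the integrability of $v$ until $L^\infty$ is reached; the H\"older bound will follow from one further pass through the same machinery once $v$ is known to be bounded. First, fix $\varphi\in C_c^\infty(B_{1/2}(0))$ with $\varphi\equiv1$ on $B_{1/3}(0)$. Using $\nabla\cdot v=\nabla\cdot a=0$, write the quadratic and drift terms in divergence form, $v\cdot\nabla v=\nabla\cdot(v\otimes v)$ and $a\cdot\nabla v+v\cdot\nabla a=\nabla\cdot(a\otimes v+v\otimes a)$, and split $q=q_{\mathrm{loc}}+q_{\mathrm{harm}}$ on $B_{1/2}(0)\times(0,1)$ with $q_{\mathrm{loc}}$ a cut-off Newtonian potential of $-\partial_i\partial_j(v_iv_j+a_iv_j+v_ia_j)$ and $q_{\mathrm{harm}}$ harmonic in $B_{1/2}(0)$ for a.e.\ time (the pressure decomposition recalled in Appendix~\ref{app.a}). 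Since $v(\cdot,0)=0$ on $B_2(0)$, the function $\varphi v$ solves a heat equation on $\R^3\times(0,1)$ with zero initial data, so Duhamel's formula gives, for $x\in B_{1/3}(0)$ and $t\in(0,S^*)$,
\begin{align*}
v(x,t)=\ &L\big(\nabla\cdot(\varphi\,v\otimes v)\big)+L\big(\nabla\mathcal{R}_i\mathcal{R}_j(\varphi\,v\otimes v)\big)\\
&+L\big(\nabla\cdot(\varphi\,a\otimes v+\varphi\,v\otimes a)\big)+L(\varphi\nabla q_{\mathrm{harm}})+L(F_{\mathrm{comm}})+(\text{drift Oseen terms}),
\end{align*}
where $F_{\mathrm{comm}}$ collects the commutator terms carrying a derivative of $\varphi$, which are supported in an annulus and of lower order.

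Next, I observe that most of these terms are immediately controlled. The nonlinear contributions $L(\nabla\cdot(\varphi\,v\otimes v))$ and $L(\nabla\mathcal{R}_i\mathcal{R}_j(\varphi\,v\otimes v))$ lie in $L^\infty(\R^3\times(0,S^*))$ by Corollary~\ref{MorreyboundsOseen}, using the subcritical Morrey bound \eqref{MorreyuL3}; by interior estimates for harmonic functions, $\varphi\nabla q_{\mathrm{harm}}$ is bounded in space with an $L^1_t$ control (obtained from $q\in L^1$ and the Calder\'on--Zygmund bound on $q_{\mathrm{loc}}$, which uses $v\in L^3$, $v\in L^{10/3}_{t,x}$ and $a\in L^5$ locally), so its heat potential is in $L^\infty$; and $L(F_{\mathrm{comm}})$ is handled by the same tools, being better behaved. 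The only genuinely critical terms are the drift potentials $L(\nabla\cdot(\varphi\,a\otimes v))$, $L(\nabla\cdot(\varphi\,v\otimes a))$ and their Oseen counterparts, which I treat by a finite bootstrap. Starting from $v\in L^\infty_tL^2_x\cap L^2_tH^1_x$ locally (hence $v\in L^{10/3}_{t,x}$) together with \eqref{MorreyuL3}, I show that if $v$ lies in some scale-subcritical parabolic Morrey / mixed-Lebesgue space $X_n$ on $B_{\rho_n}(0)\times(0,S^*)$, then pairing $a\in L^5$ with $v\in X_n$ by H\"older and using the $L^p$--$L^q$ smoothing of the heat kernel (the kernel bounds underlying Propositions~\ref{Morreygradheatbounded}--\ref{MorreyboundsOseen}) places the drift potentials in a strictly better space $X_{n+1}$ on a slightly smaller ball; the weighted bound $\sup_{s}s^{1/5}\|a(\cdot,s)\|_{L^5}\le\ep_{**}$ is what makes all the time integrals in these Duhamel estimates converge at $s=0$, so that the estimates hold up to the initial time and the time weight does not deteriorate through the iteration. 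After finitely many steps one reaches $X_N\hookrightarrow L^\infty(B_{1/3}(0)\times(0,S^*))$; choosing $\ep_{**}$ small absorbs the drift terms, closes the argument, and, collecting all the bounds, yields \eqref{uboundedest}.

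Finally, for the H\"older estimate I feed $v\in L^\infty(B_{1/3}(0)\times(0,S^*))$ back into the representation above on a slightly smaller ball: with $v$ bounded, $v\otimes v$, $a\otimes v$ (using $a\in L^5$), and the pressure (now using the stronger hypothesis $q\in L^{3/2}$, which controls $q_{\mathrm{loc}}$ in $L^{3/2}$-based spaces and keeps $q_{\mathrm{harm}}$ smooth in the interior) all lie in spaces for which the heat potentials $L(\nabla\cdot(\cdot))$ and $L(\nabla\mathcal{R}_i\mathcal{R}_j(\cdot))$ are parabolically H\"older continuous by standard Schauder-type estimates for the heat equation; once more the $s^{1/5}$-weighted smallness of $a$ keeps the time integrals convergent and, together with $v(\cdot,0)=0$, gives $\|v(\cdot,s)\|_{L^\infty}\to0$ as $s\to0^+$ (the heat potential of a forcing that is integrable in time up to $0$ vanishes at $0$). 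This decay near the initial time is precisely what promotes interior parabolic H\"older continuity to H\"older continuity on $\bar B_{1/3}(0)\times[0,S^*]$, giving \eqref{e.holdertheo}.

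The delicate point is the bootstrap: the drift terms $a\cdot\nabla v$ and $v\cdot\nabla a$ are scale-critical, so no iteration in subcritical spaces gains anything by scaling alone. Two features rescue it --- the smallness of $a$ (the parameter $\ep_{**}$), which supplies the small constant needed to absorb these terms and close the bootstrap, and the weighted-in-time bound $\sup_{s}s^{1/5}\|a(\cdot,s)\|_{L^5}\le\ep_{**}$, which is precisely the norm satisfied by mild solutions of the Navier--Stokes equations with $L^3$ data and which provides just enough integrability of $\|a(\cdot,s)\|_{L^5}$ near $s=0$ to run every estimate up to the initial time rather than only on compact subintervals of $(0,S^*)$. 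The secondary subtlety --- obtaining $\|v(\cdot,s)\|_{L^\infty}\to0$ as $s\to0$, which is needed to reach H\"older continuity up to $t=0$ --- is resolved by tracking this quantitative smallness together with the vanishing of $v$ at the initial time.
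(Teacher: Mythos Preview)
Your overall architecture matches the paper's: localize with $\varphi$, decompose the pressure, use the Morrey bound via Corollary~\ref{MorreyboundsOseen} to put the nonlinear contributions directly into $L^\infty$, bootstrap the rest, and then run one more pass for H\"older continuity using the decay $\|v(\cdot,t)\|_{L^\infty}\to 0$. The harmonic pressure and commutator terms are handled as you say.

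There is, however, a genuine gap in your treatment of the drift. In the body of the argument you claim that pairing $a\in L^5$ with $v\in X_n$ and applying heat smoothing ``places the drift potentials in a strictly better space $X_{n+1}$''. This is false: $L(\nabla\cdot(\cdot))$ maps $L^r_{t,x}\to L^{5r/(5-r)}_{t,x}$, and with $a\in L^5$ and $v\in L^q$ one has $a\otimes v\in L^{5q/(5+q)}$, whose image is exactly $L^q$ again. No gain is possible from the drift, as you yourself note in your final paragraph. The internal inconsistency matters because it is precisely here that the proof is nontrivial.

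The paper's fix is not to bootstrap the drift at all but to move it, \emph{together with its Oseen/pressure counterpart}, to the left-hand side. Concretely, after a more refined pressure decomposition (Lemma~\ref{pressure localisation} applied separately to the $v\otimes v$ and $a\otimes v$ parts), the well-localized piece $\mathcal R_i\mathcal R_j(a_i\tilde v_j+a_j\tilde v_i)$ is isolated and one writes
\[
(I-L_a)\tilde v=\sum_i L(F_i)+L(\nabla\cdot F_7),\qquad L_a(u)=L\big(\nabla\cdot(u\otimes a+a\otimes u)\big)+L\big(\nabla\mathcal R_i\mathcal R_j(u_ia_j+u_ja_i)\big).
\]
By Proposition~\ref{tensorproductheatest}, $L_a$ has small operator norm on every $L^q(\R^3\times(0,T))$, $q\in[\tfrac54,\infty]$, so $I-L_a$ is invertible there and the inverses agree on intersections (Lemma~\ref{operatorinvertiblity}). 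The bootstrap then runs entirely on the \emph{right-hand side} forcings $F_i$, which genuinely improve ($L^2\cap L^5\to L^2\cap L^{10}\to L^2\cap L^\infty$); no shrinking of balls is needed. Your coarser split $q=q_{\mathrm{loc}}+q_{\mathrm{harm}}$ does not separate the $a\otimes v$ pressure contribution, so the critical piece remains hidden on the right and cannot be absorbed in the way you suggest.

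For the H\"older step the same mechanism is used: invertibility of $I-L_a$ on $L^\infty(\R^3\times(0,t))$ gives $\|\tilde v\|_{L^\infty(\R^3\times(0,t))}\leq Ct^\nu$, and then $a\otimes\tilde v\in L^r_tL^5_x$ with $r>5$ feeds into the H\"older estimate \eqref{e.holdertwo}. Your description of this last step is correct in spirit.
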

We now state some known linear heat estimates involving the above operators. For a detailed proof, we refer the reader to  Appendix D of \cite{RRS16}.

\begin{proposition}\label{estimatesnondivergence}
Let $T\in (0,\infty)$. For $q=p$, we have
\begin{equation}\label{ctsintime}
\|L(f)(\cdot,t)\|_{L^p(\mathbb{R}^3)}\leq \int\limits_{0}^{t} \|f(\cdot, s)\|_{L^{p}(\mathbb{R}^3)} ds.
\end{equation}
For $\frac{3}{q}+\frac{2}{s}\geq  \frac{3}{p}+\frac{2}{r}-2$ ($p<q<\infty$), we have
\begin{equation}\label{nondivsobolevembedding}
{\|L(f)\|_{L^{s}((0,T); L^{q}(\mathbb{R}^3))}\leq C(r,p,s,q)\|f\|_{L^{r}((0,T); L^{p}(\mathbb{R}^3))}.}
\end{equation}
For $\frac{3}{p}+\frac{2}{r}<2$, one has
\begin{equation}\label{boundednondiv}
\|L(f)\|_{L^{\infty}(\mathbb{R}^3 \times (0,T))}\leq C(r,p)\|f\|_{L^{r}((0,T); L^{p}(\mathbb{R}^3))}.
\end{equation}
One also has
\begin{equation}\label{boundedendpoint}
{\|L(f)\|_{L^{\infty}(\mathbb{R}^3 \times (0,T))}\leq C\|f\|_{L^{1}((0,T); L^{\infty}(\mathbb{R}^3))}}
\end{equation}
\end{proposition}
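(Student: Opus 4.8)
The four estimates asserted in Proposition~\ref{estimatesnondivergence} are all standard consequences of the explicit convolution kernels appearing in the definitions of $L(f)$ and $L(\nabla\cdot F)$, and the plan is simply to record the relevant kernel bounds and then invoke Young's inequality and the Hardy--Littlewood--Sobolev inequality in the time--space variables. For \eqref{ctsintime}, I would write $L(f)(\cdot,t)=\int_0^t e^{(t-s)\Delta}f(\cdot,s)\,ds$, use that $e^{\tau\Delta}$ is given by convolution with the Gaussian \eqref{wholespaceheatkernel}, whose $L^1(\R^3)$-norm equals $1$ for every $\tau>0$, so that $\|e^{\tau\Delta}f(\cdot,s)\|_{L^p(\R^3)}\le\|f(\cdot,s)\|_{L^p(\R^3)}$ by Young's inequality; integrating in $s\in(0,t)$ and using Minkowski's integral inequality yields \eqref{ctsintime}. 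This is the easy warm-up case and requires no scaling.

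For \eqref{nondivsobolevembedding} and \eqref{boundednondiv}, the starting point is the mixed-norm bound on the heat semigroup: for $1\le p\le q\le\infty$,
\begin{equation*}
\|e^{\tau\Delta}f(\cdot,s)\|_{L^q(\R^3)}\le C\,\tau^{-\frac32(\frac1p-\frac1q)}\|f(\cdot,s)\|_{L^p(\R^3)},
\end{equation*}
which follows from Young's inequality and the fact that $\|\nabla^0 e^{\tau\Delta}\|_{L^m}=C\tau^{-\frac32(1-\frac1m)}$ with $\frac1m=1-\frac1p+\frac1q$. Plugging this into the Duhamel formula gives
\begin{equation*}
\|L(f)(\cdot,t)\|_{L^q(\R^3)}\le C\int_0^t (t-s)^{-\frac32(\frac1p-\frac1q)}\|f(\cdot,s)\|_{L^p(\R^3)}\,ds,
\end{equation*}
so that $\|L(f)(\cdot,t)\|_{L^q_x}$ is controlled by a one-dimensional Riesz potential (fractional integral of order $1-\frac32(\frac1p-\frac1q)$, which is positive precisely when $\frac3q>\frac3p-2$) applied to $s\mapsto\|f(\cdot,s)\|_{L^p_x}$; the Hardy--Littlewood--Sobolev inequality in the $t$-variable then produces the $L^s_t L^q_x$ bound from the $L^r_t L^p_x$ bound under the stated exponent relation $\frac3q+\frac2s\ge\frac3p+\frac2r-2$ (the boundary case being admissible since $1<r,s<\infty$). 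For the $L^\infty_{t,x}$ estimate \eqref{boundednondiv}, one uses instead Hölder's inequality in $s$ directly: when $\frac3p+\frac2r<2$ the weight $(t-s)^{-\frac32/p}$ lies in $L^{r'}(0,T)$ locally, so $\|L(f)(\cdot,t)\|_{L^\infty_x}\le C(r,p)\,T^{\,\theta}\|f\|_{L^r_tL^p_x}$ for a suitable $\theta>0$; here I should be slightly careful to absorb the $T$-dependence into the constant or note it is harmless for the applications. Finally, \eqref{boundedendpoint} is immediate from \eqref{ctsintime} with $p=\infty$ together with the trivial bound $\|L(f)(\cdot,t)\|_{L^\infty_x}\le\int_0^t\|f(\cdot,s)\|_{L^\infty_x}ds$.

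The estimates for $L(\nabla\cdot F)$ and for the Oseen-type operator $L(\nabla\mathcal R_i\mathcal R_j(\cdot))$ needed elsewhere (Corollary~\ref{MorreyboundsOseen}) run along the same lines but with one extra derivative on the kernel: $\partial_j e^{\tau\Delta}$ and $e^{\tau\Delta}\mathbb P\nabla\cdot$ are convolution with kernels of size $\tau^{-2}(\,|x|^2/\tau+1)^{-2}$, equivalently bounded by $(|x|^2+\tau)^{-2}$ as recalled after \eqref{wholespaceheatkernel}; the $L^m(\R^3)$ norm of such a kernel is $C\tau^{-2+\frac3{2m}}$, which shifts the exponent relations by one unit of scaling and is exactly what is used to close the bootstrap in Section~\ref{sec.boundedness}. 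I expect no genuine obstacle here: the only points demanding minor care are (i) keeping track of which exponent inequalities are allowed to be equalities (Young is scale-sharp, HLS fails at the endpoint, so the boundary case $\frac3q+\frac2s=\frac3p+\frac2r-2$ is permissible only because $1<r,s<\infty$ are strict), and (ii) making the $T$-dependence explicit or benign in \eqref{boundednondiv}; since this proposition is quoted verbatim from Appendix~D of \cite{RRS16}, the cleanest route is to cite that reference for the detailed computations and include only the kernel-bound reductions above.
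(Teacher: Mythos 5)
Your argument is correct and coincides with the standard proof: the paper itself gives no proof of this proposition, deferring entirely to Appendix D of \cite{RRS16}, and your reduction to the Gaussian kernel bounds followed by Minkowski/Young, Hardy--Littlewood--Sobolev in time, and H\"older in time under $\frac3p+\frac2r<2$ is exactly the argument carried out there. Your caveat that the equality case of \eqref{nondivsobolevembedding} requires $1<r<s<\infty$ (HLS failing at the endpoints) is the only point of care, and it is harmless for every application made in Section \ref{sec.boundedness}.
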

\begin{proposition}\label{estimatesdivergence}
For $\frac{1}{q}= \frac{1}{p}-\frac{1}{5}$ ($p<q<\infty$), we have
\begin{equation}\label{divsobolevembedding}
{\|L(\nabla\cdot F)\|_{L^{q}(\mathbb{R}^3 \times (0,T))}\leq C(p,q)\|F\|_{L^{p}(\mathbb{R}^3 \times (0,T))}}.
\end{equation}
For $p\in(5,\infty)$, one has
\begin{equation}\label{boundeddiv}
{\|L(\nabla\cdot F)\|_{L^{\infty}(\mathbb{R}^3 \times (0,T))}\leq C(p)\|F\|_{L^{p}(\mathbb{R}^3 \times (0,T))}}.
\end{equation}
\end{proposition}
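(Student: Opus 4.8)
\noindent\textbf{Proof plan for Proposition \ref{estimatesdivergence}.}
Both estimates will follow from the explicit kernel representation
\[
L(\nabla\cdot F)(x,t)=\int\limits_0^t\int\limits_{\R^3}\partial_j\Phi(x-y,t-s)\,F_j(y,s)\,dy\,ds,\qquad \Phi(x,t)=(4\pi t)^{-3/2}e^{-|x|^2/(4t)},
\]
together with the pointwise bound $|\nabla_x\Phi(x,t)|\leq C(|x|^2+t)^{-2}$ for $t>0$ --- this is exactly the kernel bound already invoked in the proof of Corollary \ref{MorreyboundsOseen}, and it follows from $|\nabla_x\Phi(x,t)|\lesssim t^{-2}(|x|/\sqrt t)\,e^{-|x|^2/(4t)}$ by treating the regimes $|x|^2\leq t$ and $|x|^2>t$ separately. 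Extending $F$ by zero outside $\R^3\times(0,T)$ and writing $\mathcal K(x,t):=(|x|^2+t)^{-2}\mathbf 1_{t>0}$, one obtains $|L(\nabla\cdot F)(x,t)|\leq C(\mathcal K\ast|F|)(x,t)$ for the space--time convolution, so it suffices to estimate $\mathcal K\ast|F|$.

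For \eqref{divsobolevembedding} the plan is to apply the anisotropic (parabolic) Hardy--Littlewood--Sobolev / weak Young inequality. The kernel $\mathcal K$ lies in weak-$L^{5/4}$ for the parabolic Lebesgue measure on $\R^4$: the superlevel set $\{(x,t):(|x|^2+t)^{-2}>\lambda,\ t>0\}$ is the parabolic ball $\{|x|^2+t<\lambda^{-1/2}\}$, of measure $\sim\lambda^{-5/4}$. Hence $\|\mathcal K\ast|F|\|_{L^q(\R^4)}\leq C\|\mathcal K\|_{L^{5/4,\infty}}\|F\|_{L^p(\R^4)}$ whenever $1+\tfrac1q=\tfrac45+\tfrac1p$, i.e. $\tfrac1q=\tfrac1p-\tfrac15$, for $1<p<5$ (equivalently $\tfrac54<q<\infty$); restricting back to $\R^3\times(0,T)$ only decreases both sides, and the constant depends on $p,q$ alone --- consistently with the invariance of \eqref{divsobolevembedding} under the parabolic rescaling $F(x,t)\mapsto\lambda F(\lambda x,\lambda^2 t)$. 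One may instead avoid the anisotropic HLS: Young's inequality in $x$ gives $\|L(\nabla\cdot F)(\cdot,t)\|_{L^q_x}\leq C\int_0^t(t-s)^{-\frac12-\frac32(\frac1p-\frac1q)}\|F(\cdot,s)\|_{L^p_x}\,ds$ via $\|\nabla\Phi(\cdot,\tau)\|_{L^m_x}\sim\tau^{-\frac12-\frac32(1-\frac1m)}$ with $1+\tfrac1q=\tfrac1m+\tfrac1p$; for $\tfrac1p-\tfrac1q=\tfrac15$ the time kernel is $\tau^{-4/5}$, and one-dimensional Hardy--Littlewood--Sobolev in $t$ then delivers $L^p_t\to L^q_t$ with exactly $\tfrac1q=\tfrac1p-\tfrac15$.

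For \eqref{boundeddiv} the plan is Hölder's inequality directly: for fixed $(x,t)$ with $0<t\leq T$ one has $|L(\nabla\cdot F)(x,t)|\leq C\|\mathcal K\|_{L^{p'}(\R^3\times(0,T))}\|F\|_{L^p(\R^3\times(0,T))}$, and after the substitution $x=\sqrt s\,z$,
\[
\|\mathcal K\|_{L^{p'}(\R^3\times(0,T))}^{p'}=\int\limits_0^T\!\!\int\limits_{\R^3}\frac{dx\,ds}{(|x|^2+s)^{2p'}}=c(p')\int\limits_0^T s^{\frac32-2p'}\,ds<\infty
\]
precisely when $\tfrac32-2p'>-1$, i.e. $p'<\tfrac54$, i.e. $p>5$ (the spatial integral $\int_{\R^3}(|z|^2+1)^{-2p'}dz$ converges for every $p'>1$). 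This is \eqref{boundeddiv}; by the same scaling the sharp constant carries a factor $T^{(p-5)/(2p)}$, so strictly it is $C(p,T)$, which is harmless when $T$ is fixed as in the applications here. Equivalently, Young in $x$ gives $\|L(\nabla\cdot F)(\cdot,t)\|_{L^\infty_x}\leq C\int_0^t(t-s)^{-\frac12-\frac3{2p}}\|F(\cdot,s)\|_{L^p_x}\,ds$, and Hölder in $t$ closes the estimate since $(\tfrac12+\tfrac3{2p})p'<1\iff p>5$.

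There is no real obstacle here: this is a standard parabolic smoothing estimate, which is why the paper only cites \cite{RRS16}. The only points requiring care are the pointwise kernel bound and the bookkeeping of exponents so that the time integrals converge in exactly the stated ranges $1<p<5$ and $p>5$ respectively (and, for \eqref{boundeddiv}, the latent $T$-dependence of the constant). If one prefers an entirely self-contained argument, the one-dimensional Hardy--Littlewood--Sobolev steps above can be replaced by a dyadic splitting of the $s$-integral into pieces $t-s\sim 2^{-j}$ followed by summation of a convergent geometric series, valid in precisely the same exponent ranges.
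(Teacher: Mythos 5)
Your argument is correct. The paper gives no proof of this proposition, deferring to Appendix D of \cite{RRS16}, and your proof is exactly the standard argument underlying that reference: the pointwise kernel bound $|\nabla_x\Phi(x,t)|\leq C(|x|^2+t)^{-2}$ (the same bound invoked in Corollary \ref{MorreyboundsOseen}), then parabolic weak Young/Hardy--Littlewood--Sobolev for \eqref{divsobolevembedding} (valid for $1<p<5$, $q<\infty$) and H\"older for \eqref{boundeddiv} (valid precisely for $p>5$), with the alternative route via Young in $x$ and HLS/H\"older in $t$ giving the same exponent ranges. Your observation that the constant in \eqref{boundeddiv} necessarily depends on $T$ (a factor $T^{(p-5)/(2p)}$, as the estimate cannot be parabolically scale-invariant for $p\neq 5$) is a correct refinement of the statement as written and is harmless since $T$ is fixed in all applications.
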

\begin{proposition}\label{tensorproductheatest}
For  $q\in[\frac{4}{5},\infty)$, we have
\begin{equation}\label{criticalestfinite}
\|L(\nabla\cdot(a\otimes b))\|_{L^{q}(\mathbb{R}^3 \times (0,T))} \leq C(q)\|a\|_{L^{5}(\mathbb{R}^3 \times (0,T))} \|b\|_{L^{q}(\mathbb{R}^3 \times (0,T))},
\end{equation}
\begin{equation}\label{criticalestbounded}
\|L(\nabla\cdot(a\otimes b))\|_{L^{\infty}(\mathbb{R}^3 \times (0,T))} \leq C_{univ}\big(\sup_{0<s<\infty}s^{\frac{1}{5}}\|a(\cdot,s)\|_{L^{5}(\mathbb{R}^3)}\big)\|b\|_{L^{\infty}(\mathbb{R}^3 \times (0,T))}.
\end{equation}
Here, $C_{univ}\in(0,\infty)$ is a universal constant.
\end{proposition}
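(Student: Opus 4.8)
The plan is to deduce both estimates from the fact, already used for Corollary~\ref{MorreyboundsOseen}, that $\partial_j e^{\tau\Delta}$ and $e^{\tau\Delta}\mathbb{P}\nabla\cdot$ are convolution operators whose kernels are bounded by $C(|x|^2+\tau)^{-2}$ on $\R^3\times(0,\infty)$. Writing $F=a\otimes b$, this yields the pointwise domination
\begin{equation*}
|L(\nabla\cdot(a\otimes b))(x,t)|\leq C\int\limits_0^t\int\limits_{\R^3}\frac{|a(y,s)|\,|b(y,s)|}{(|x-y|^2+t-s)^2}\,dyds=:C\,\big(\mathcal K\ast(|a|\,|b|)\big)(x,t),
\end{equation*}
where $\mathcal K(x,t)=(|x|^2+t)^{-2}\mathbf 1_{\{t>0\}}$ is homogeneous of parabolic degree $-4$ on $\R^3\times\R$, hence belongs to $L^{5/4,\infty}(\R^3\times\R)$ (the superlevel set $\{\mathcal K>\lambda\}=\{|x|^2+t<\lambda^{-1/2},\ t>0\}$ has Lebesgue measure $\sim\lambda^{-5/4}$).

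For \eqref{criticalestfinite} in the range $q>\tfrac54$ I would simply combine Hölder's inequality with Proposition~\ref{estimatesdivergence}: Hölder in $(y,s)$ gives $\|a\otimes b\|_{L^p(\R^3\times(0,T))}\leq\|a\|_{L^5}\|b\|_{L^q}$ with $\tfrac1p=\tfrac15+\tfrac1q$, so that $1<p<q<\infty$ and $\tfrac1q=\tfrac1p-\tfrac15$, whence $\|L(\nabla\cdot(a\otimes b))\|_{L^q}\leq C(q)\|a\otimes b\|_{L^p}\leq C(q)\|a\|_{L^5}\|b\|_{L^q}$. (Equivalently one may separate the variables, using the kernel bound and Young's inequality in $x$ to get $\|\partial_j e^{\tau\Delta}(a_j(s)b(s))\|_{L^q_x}\leq C\tau^{-4/5}\|a(\cdot,s)\|_{L^5_x}\|b(\cdot,s)\|_{L^q_x}$ and then the one-dimensional weak Young inequality in time against $\tau^{-4/5}\mathbf 1_{\{\tau>0\}}\in L^{5/4,\infty}(\R)$.) The constant degenerates as $q\downarrow\tfrac54$, which is natural for these endpoint-type inequalities. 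The remaining range $q\in[\tfrac45,\tfrac54]$ is the genuinely delicate point: there $p\leq1$, so the Young/Hardy--Littlewood--Sobolev machinery no longer applies to the datum $a\otimes b$ viewed abstractly in $L^p$, and one must exploit the bilinear structure — a general $L^p$ function with $p\leq 1$ cannot be treated, but $a\otimes b$ with $a\in L^5$ is no more singular than $b\in L^q$, so extracting the $L^5$ factor scale by scale (a Calderón--Zygmund-type decomposition of $b$ adapted to the parabolic metric) recovers the estimate; this more technical argument is carried out in \cite[Appendix~D]{RRS16} and \cite{LR02}.

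The estimate \eqref{criticalestbounded} is short and self-contained, and I would prove it directly from the pointwise bound. Pulling $\|b\|_{L^\infty(\R^3\times(0,T))}$ out of the integral and applying Hölder in $y$ with exponents $5$ and $\tfrac54$, together with $\int_{\R^3}(|z|^2+1)^{-5/2}\,dz<\infty$, gives
\begin{equation*}
\int\limits_{\R^3}\frac{|a(y,s)|\,dy}{(|x-y|^2+t-s)^2}\leq C\Big(\sup_{\sigma>0}\sigma^{1/5}\|a(\cdot,\sigma)\|_{L^5}\Big)\,s^{-1/5}(t-s)^{-4/5}\qquad(0<s<t).
\end{equation*}
Since both exponents lie in $(0,1)$, the time integral $\int_0^t s^{-1/5}(t-s)^{-4/5}\,ds=B(\tfrac45,\tfrac15)$ is a finite constant independent of $t$, so integrating in $s$ gives $|L(\nabla\cdot(a\otimes b))(x,t)|\leq C_{univ}\big(\sup_{\sigma>0}\sigma^{1/5}\|a(\cdot,\sigma)\|_{L^5}\big)\|b\|_{L^\infty}$ uniformly in $(x,t)$, which is \eqref{criticalestbounded}. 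I anticipate no real obstacle here; the only point requiring a little care is that both exponents $\tfrac15,\tfrac45$ are strictly between $0$ and $1$, so that the Beta integral converges.
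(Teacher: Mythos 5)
Your proof of \eqref{criticalestbounded} and your proof of \eqref{criticalestfinite} in the range $q>\frac54$ are correct, and they are essentially the argument the paper has in mind: the paper gives no proof of Proposition \ref{tensorproductheatest} beyond the blanket citation of heat-kernel estimates (Appendix D of [RRS16]), and what you do --- the pointwise kernel bound $|\partial_j e^{\tau\Delta}(x)|\lesssim(|x|^2+\tau)^{-2}$, H\"older in $y$ with exponents $5$ and $\frac54$ plus the Beta integral $\int_0^t s^{-1/5}(t-s)^{-4/5}ds=B(\frac45,\frac15)$ for \eqref{criticalestbounded}, and space-time H\"older followed by \eqref{divsobolevembedding} (equivalently Young in $x$ and weak Young in $t$) for \eqref{criticalestfinite} with $q>\frac54$ --- is exactly the standard route.

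The genuine gap is your treatment of $q\in[\frac45,\frac54]$. You assert that the bilinear structure, via a parabolic Calder\'on--Zygmund decomposition of $b$, ``recovers the estimate'' and that this is carried out in [RRS16, Appendix D] and [LR02]; neither reference contains such an argument, and none can, because the inequality is false in that range. Take $a=b=\chi_{B_1(0)\times(0,1)}\,e$ with $e$ a fixed unit vector, on $\R^3\times(0,T)$. For $t\geq 4$ the Duhamel integral is essentially $\partial_1 e^{t\Delta}$ applied to a fixed positive mass, so $|L(\nabla\cdot(a\otimes b))(x,t)|\geq c\,t^{-2}$ on a set of $x$ of measure $\geq c\,t^{3/2}$, whence $\|L(\nabla\cdot(a\otimes b))\|_{L^q(\R^3\times(0,T))}^q\gtrsim\int_4^T t^{3/2-2q}dt$, which diverges as $T\to\infty$ exactly when $q\leq\frac54$, while $\|a\|_{L^5}\|b\|_{L^q}$ is independent of $T$; since all three norms are invariant under the parabolic rescaling that maps $(0,T)$ onto $(0,1)$, a $T$-dependent constant cannot rescue the bound either. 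So the lower endpoint $\frac45$ in the statement should be read as a misprint for $\frac54$ (compare Lemma \ref{operatorinvertiblity}, which is stated for $q\geq\frac54$, and the bootstrap in Section \ref{sec.boundedness}, which only uses $q\in\{2,5,10\}$ and $L^\infty$), and even $q=\frac54$ fails for the strong-type estimate by the logarithmic divergence above; the statement your argument actually proves, and the one needed in the paper, is \eqref{criticalestfinite} for $q\in(\frac54,\infty)$. Rather than deferring this range to the literature, you should restrict the range (or flag the misprint) --- the claim that a cleverer decomposition closes it is a step that would fail.
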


\begin{proposition}[H\"older estimates]\label{prop.holderbootstrap}
For all $r,\, p\in[1,\infty]$ such that $\frac2r+\frac3p<2$, we have
\begin{equation}\label{e.holderone}
[L(f)]_{C^{0,\nu}_{par}(\R^3\times [0,T])}\leq C(p,T)\|f\|_{L^r(0,T;L^p(\R^3))},
\end{equation}
for all $\nu\in(0,\min(1-\frac1r-\frac3{2p},\frac12))$. 
Moreover, for all $r,\, p\in[1,\infty]$ such that $\frac2r+\frac3p<1$, we have
\begin{equation}\label{e.holdertwo}
[L(\nabla\cdot F)]_{C^{0,\nu}_{par}(\R^3\times [0,T])}\leq C(p,T)\|F\|_{L^r(0,T;L^p(\R^3))},
\end{equation}
for all $\nu\in(0,\frac12-\frac1r-\frac3{2p})$. 
\end{proposition}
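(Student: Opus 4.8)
\textbf{Proof plan for Proposition \ref{prop.holderbootstrap} (Hölder estimates for the heat potentials).}

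The plan is to reduce both estimates to a single quantitative estimate on the parabolic Hölder seminorm of a convolution of the data against the appropriate heat kernel, and then to interpolate between an $L^\infty$-in-time bound and a spatial Hölder bound, matching the definition of $[\cdot]_{C^{0,\nu}_{par}}$ given in the footnote. Recall that $[\cdot]_{C^{0,\nu}_{par}(\R^3\times[0,T])}=[\cdot]_{C^{0,\nu}_t([0,T];L^\infty_x(\R^3))}+[\cdot]_{L^\infty_t(0,T;C^{0,2\nu}_x(\R^3))}$, so it suffices to prove, for each of $L(f)$ and $L(\nabla\cdot F)$, (i) a bound on the temporal seminorm with exponent $\nu$ in $t$, and (ii) a bound on the spatial seminorm with exponent $2\nu$ in $x$, uniformly in $t\in[0,T]$.

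First I would treat $L(f)$. For the spatial seminorm, fix $t$ and write $L(f)(x,t)-L(f)(x',t)=\int_0^t (e^{(t-s)\Delta}f(\cdot,s))(x)-(e^{(t-s)\Delta}f(\cdot,s))(x')\,ds$. Applying Hölder in space (exponent $p$) against the heat kernel and its increments, one gets a time integral of $(t-s)^{-\frac{3}{2p'}}$ times a factor $\min(1, |x-x'|/(t-s)^{1/2})$ times $\|f(\cdot,s)\|_{L^p}$; then Hölder in $s$ with exponent $r$ produces a bound by $\|f\|_{L^r(0,T;L^p)}$ times $\left(\int_0^t (t-s)^{-\frac{3}{2p'}r'}\min(1,|x-x'|/(t-s)^{1/2})^{r'}\,ds\right)^{1/r'}$. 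Splitting the $s$-integral at $s=t-|x-x'|^2$ and using $\frac2r+\frac3p<2$ to guarantee integrability, this last quantity is $\lesssim |x-x'|^{2\mu}$ with $2\mu = 2 - \frac2r - \frac3p$, hence any exponent $2\nu\le 2\mu$ (equivalently $\nu\le 1-\frac1r-\frac3{2p}$) is admissible, and one caps at $\nu<\tfrac12$ since that is the best the heat kernel smoothing gives. For the temporal seminorm, fix $x$ and estimate $L(f)(x,t)-L(f)(x,t')$ for $t'<t$ by splitting into $\int_{t'}^t e^{(t-s)\Delta}f\,ds$ (controlled directly, using $\|e^{\tau\Delta}g\|_\infty\lesssim \tau^{-\frac{3}{2p}}\|g\|_p$ and Hölder in $s$, giving $|t-t'|^{1-\frac1r-\frac3{2p}}$) plus $\int_0^{t'}(e^{(t-s)\Delta}-e^{(t'-s)\Delta})f\,ds$ (using $\|(e^{\tau\Delta}-e^{\tau'\Delta})g\|_\infty \lesssim |\tau-\tau'|\,\tau'^{-1-\frac{3}{2p}}\|g\|_p$ when $\tau>\tau'$, then Hölder and integrating). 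Both pieces yield a power $|t-t'|^{\nu}$ with $\nu = 1-\frac1r-\frac3{2p}$ up to the cap $\tfrac12$.

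For $L(\nabla\cdot F)$ the argument is identical in structure but one kernel derivative is moved onto the heat semigroup: replace $e^{(t-s)\Delta}$ by $\partial_j e^{(t-s)\Delta}$, which costs an extra factor $(t-s)^{-1/2}$ in every bound. This shifts the gain of integrability, so the condition becomes $\frac2r+\frac3p<1$ and the resulting exponent is $\nu = \frac12-\frac1r-\frac3{2p}$ (already less than $\tfrac12$, so no capping is needed). Concretely: for the spatial seminorm one bounds $|\partial_j e^{\tau\Delta}(x-\cdot)-\partial_j e^{\tau\Delta}(x'-\cdot)|$, applies Hölder in space with exponent $p$, and integrates the resulting $\tau^{-\frac12-\frac{3}{2p'}}\min(1,|x-x'|/\tau^{1/2})$ against $\|F(\cdot,s)\|_{L^p}$ via Hölder in $s$; and for the temporal seminorm one splits at $s=t'$ and uses $\|\partial_j e^{\tau\Delta}g\|_\infty\lesssim\tau^{-\frac12-\frac{3}{2p}}\|g\|_p$ together with $\|\partial_j(e^{\tau\Delta}-e^{\tau'\Delta})g\|_\infty\lesssim|\tau-\tau'|\tau'^{-\frac32-\frac{3}{2p}}\|g\|_p$.

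The main technical obstacle is bookkeeping the splitting of the $s$-integral at the scale $|x-x'|^2$ (resp. $|t-t'|$) and checking that all the exponents of $(t-s)$ that appear are integrable precisely under the stated hypotheses $\frac2r+\frac3p<2$ (resp. $<1$) — in particular that the "near-diagonal" part ($s$ close to $t$), where the $\min$ equals $|x-x'|/(t-s)^{1/2}$, and the "far" part, where the $\min$ equals $1$, both contribute the same power of $|x-x'|$. Since this is a standard Giga–Miyakawa / Solonnikov-type computation, I would cite Appendix D of \cite{RRS16} for the underlying pointwise heat kernel bounds and present only the exponent counting in detail.
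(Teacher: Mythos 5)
Your plan is correct and follows essentially the same route as the paper: both reduce the parabolic H\"older seminorm to quantitative heat-semigroup smoothing/H\"older bounds with singular weights $t^{-\frac{3}{2p}-\nu}$ (plus an extra $t^{-\frac12}$ for $L(\nabla\cdot F)$), split the time increment at the nearer endpoint, and then apply H\"older in the time integral, with the exponent counting giving exactly the ranges in \eqref{e.holderone}--\eqref{e.holdertwo}. Only a cosmetic slip: in your spatial estimate the kernel factor should be $(t-s)^{-\frac{3}{2p}}$ (the $L^{p'}$ norm of the heat kernel), not $(t-s)^{-\frac{3}{2p'}}$; your final exponents already reflect the correct value.
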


\begin{proof}[Proof of Proposition \ref{prop.holderbootstrap}]
Our first two claims are that for all $u_0\in L^p(\R^3)$, $p\in[1,\infty]$, for all $t\in(0,\infty)$,
\begin{align}
\label{e.claimholder}
[e^{t\Delta}u_0]_{C^{0,2\nu}(\R^3)}+\sup_{h\in(0,T),\ x\in\R^3}\frac{|e^{(t+h)\Delta}u_0-e^{t\Delta}u_0|}{|h|^{\nu}}\leq\ &\frac{C(p,\nu,T)}{t^{\frac3{2p}+\nu}}\|u_0\|_{L^p},\\
[\nabla e^{t\Delta}u_0]_{C^{0,2\nu}(\R^3)}+\sup_{h\in(0,T),\ x\in\R^3}\frac{|\nabla e^{(t+h)\Delta}u_0-\nabla e^{t\Delta}u_0|}{|h|^{\nu}}\leq\ &\frac{C(p,\nu,T)}{t^{\frac3{2p}+\nu+\frac12}}\|u_0\|_{L^p}.\label{e.claimholderbis}
\end{align}
These two estimates are simple consequences of estimates for the heat semigroup. 
We give the proof of \eqref{e.holdertwo}, which relies on \eqref{e.claimholderbis}. The proof of \eqref{e.holderone} is similar and relies on \eqref{e.claimholder} instead. Assume that $\frac2r+\frac3p<1$ and $\nu\in(0,1-\frac1r-\frac3{2p})$. Then, 
\begin{align*}
[L(\nabla\cdot F)]_{C^{0,\nu}_{par}(\R^3\times [0,T])}\leq\ &\sup_{t\in(0,T)}[L(\nabla\cdot F)(\cdot,t)]_{C^{0,2\nu}(\R^3)}\\
&+\sup_{t\in (0,T)}\sup_{h\in(0,T),\ x\in\R^3}\frac{|L(\nabla\cdot F)(x,t+h)-L(\nabla\cdot F)(x,t)|}{|h|^{\nu}}.
\end{align*}
On the one hand, we have for all $t\in (0,T)$,
\begin{align*}
[L(\nabla\cdot F)(\cdot,t)]_{C^{0,2\nu}(\R^3)}\leq\ &\int\limits_0^t[e^{(t-s)\Delta}\nabla\cdot F]_{C^{0,2\nu}(\R^3)}ds\\
\leq\ &C(p,\nu)\int\limits_0^t(t-s)^{-\frac3{2p}-\nu-\frac12}\|F(\cdot,s)\|_{L^p(\R^3)}ds\\
\leq\ &C(p,\nu)\Bigg(\int\limits_0^t(t-s)^{-(\frac3{2p}+\nu+\frac12)\frac r{r-1}}ds\Bigg)^{\frac{r-1}r}\|F\|_{L^r(0,T;L^p(\R^3))},
\end{align*}
which belongs to $L^\infty(0,T)$. On the other hand, for all $h\in (0,T)$, for all $t\in (0,T)$,
\begin{align*}
&\left\|\frac{L(\nabla\cdot F)(x,t+h)-L(\nabla\cdot F)(x,t)}{|h|^{\nu}}\right\|_{L^\infty(\R^3)}\\
\leq\ &\int\limits_0^t\left\|\frac{e^{(t+h-s)\Delta}\nabla\cdot F-e^{(t-s)\Delta}\nabla\cdot F}{|h|^\nu}\right\|_{L^\infty(\R^3)}ds+\int\limits_t^{t+h}\left\|\frac{e^{(t+h-s)\Delta}\nabla\cdot F}{|h|^\nu}\right\|_{L^\infty(\R^3)}ds\\
=\ &I_1(h,t)+I_2(h,t).
\end{align*}
For $I_1$, we immediately have that 
\begin{equation*}
\sup_{h\in (0,T)} I_1(h,t)\leq C(p,\nu,T)\Bigg(\int\limits_0^t(t-s)^{-(\frac3{2p}+\nu+\frac12)\frac r{r-1}}ds\Bigg)^{\frac{r-1}r}\|F\|_{L^r(0,T;L^p(\R^3))},
\end{equation*}
which is bounded in $t\in (0,T)$. The term $I_2$ is a remainder term. 
A direct computation leads to 
\begin{equation*}
I_2(h,t)\leq C(p,\nu)|h|^{-\nu-\frac{3}{2p}-\frac12+\frac{r-1}{r}},
\end{equation*}
which is bounded uniformly in $t\in (0,T)$, $h\in (0,T)$. 
\end{proof}

In order to prove Theorem  \ref{localboundednesscriticalStokes} we will first localise $v$ in space with a cut-off function $\varphi$ (in particular we will consider $\tilde{v}=\varphi v$).
When considering the pressure, we will encounter objects such as $$\nabla(\varphi\mathcal{R}_{i}\mathcal{R}_{j}((v_{i}a_{j}+a_{i}v_{j}+ v_{i}v_{j})\chi_{B_2(0)})).$$  Using Propositions \ref{estimatesnondivergence}-\ref{estimatesdivergence} we see that if $$\partial_{t}w-\Delta w=\nabla (\varphi\mathcal{R}_{i}\mathcal{R}_{j}((v_{i}a_{j}+a_{i}v_{j})\chi_{B_2(0)}))\,\,\,\,\,\textrm{in}\,\,\,\mathbb{R}^3\times(0,1)\,\,\,\,\,\textrm{and}\,\,\,\,\,w(\cdot,0)=0$$ then the criticality of $a$ means that $w$ has the same integrability as $v$, which is troublesome with regards to improving the integrability of $\tilde{v}$. 
To avoid this issue, it will be advantageous to split $\varphi\mathcal{R}_{i}\mathcal{R}_{j}((v_{i}a_{j}+a_{i}v_{j}+ v_{i}v_{j})\chi_{B_2(0)})$ with one piece being ``well localised''
$$\mathcal{R}_{i}\mathcal{R}_{j}(\tilde{v}_{i}a_{j}+a_{i}\tilde{v}_{j}+ \varphi v_{i}v_{j}).$$ Furthermore, another key advantage\footnote{This particular advantage was also exploited by Kukavica in \cite{Kukavicamorreyforce}.} is that we can apply \eqref{OseenMorreybounded} to $\nabla \mathcal{R}_{i}\mathcal{R}_{j}(\varphi v_{i}v_{j})$.   This decomposition allows us to consider the invertible operator $$\tilde{v}-L(\nabla\cdot(\tilde{v}\otimes a+ a\otimes \tilde{v}))+ \int\limits_{0}^{t} \nabla e^{(t-s)\Delta} \mathcal{R}_{i}\mathcal{R}_{j}(\tilde{v}_{i}a_{j}+\tilde{v}_{j}a_{i})ds=F,$$
where $F$ depends on $v$ and improves its integrability at each stage of the bootstrap. Related bootstrap arguments were used by Seregin in \cite{Ser06}. 
The splitting of the pressure from Lemma \ref{pressure localisation} in Appendix \ref{app.a} gives us what we need. 

Now, we state a Lemma regarding invertibility of a certain linear operator, that will play a key role in bootstrapping the integrability of $v$. The proof essentially follows from Proposition \ref{tensorproductheatest}.
\begin{lemma}\label{operatorinvertiblity}
Let $T\in (0,\infty)$. 
Define $$L_{a}(u):= L(\nabla\cdot(u\otimes a+ a\otimes u))+ \int\limits_{0}^{t} \nabla e^{(t-s)\Delta} \mathcal{R}_{i}\mathcal{R}_{j}(u_{i}a_{j}+u_{j}a_{i})ds.$$
Then for every $\frac{5}{4}\leq q\leq \infty$ there exists $\ep(q)\in(0,\infty)$ such that if 
\begin{equation}\label{smallnessforinvertibility}
{\|a\|_{L^{5}(\mathbb{R}^3 \times (0,T))}+\sup_{0<s<1} s^{\frac{1}{5}} \|a(\cdot,s)\|_{L^5(\mathbb{R}^3)}\leq\varepsilon(q)}
\end{equation}
then $I-L_{a}: L^{q}(\mathbb{R}^3 \times (0,T)) \rightarrow L^{q}(\mathbb{R}^3 \times (0,T))$
is invertible.
Moreover, if
\begin{equation}\label{smallnessforinvertibilityintersect}
\|a\|_{L^{5}(\mathbb{R}^3 \times (0,T))}+\sup_{0<s<T} s^{\frac{1}{5}} \|a(\cdot,s)\|_{L^5(\mathbb{R}^3)}\leq\min{(\varepsilon(p), \varepsilon(q))}
\end{equation}
then \begin{equation}\label{equalonintersection}
{\textrm{on}\,\,\,\,L^{p}(\mathbb{R}^3 \times (0,T) )\cap L^{q}(\mathbb{R}^3 \times (0,T)),\,\,\,\,\, (I-L_{a})^{-1}_{L^{p}\rightarrow L^{p}}= (I-L_{a})^{-1}_{L^{q}\rightarrow L^{q}}}.
\end{equation}
\end{lemma}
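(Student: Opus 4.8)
Lemma \ref{operatorinvertiblity} concerns the invertibility of $I-L_a$ on $L^q(\R^3\times(0,T))$ for $q\in[5/4,\infty]$, under a smallness assumption on $a$ in the critical norm, together with the consistency statement \eqref{equalonintersection}.

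\textbf{Plan of proof.} The plan is to use the standard Neumann series argument: show that $L_a$ is a bounded linear operator on $L^q(\R^3\times(0,T))$ with operator norm strictly less than $1$ provided the smallness constant $\ep(q)$ is chosen small enough, so that $I-L_a=\sum_{k\ge 0}L_a^k$ converges in operator norm and defines the inverse. The boundedness of $L_a$ on $L^q$ is precisely what Proposition \ref{tensorproductheatest} furnishes: the first piece $L(\nabla\cdot(u\otimes a+a\otimes u))$ is controlled via \eqref{criticalestfinite} (for $q\in[4/5,\infty)$) and \eqref{criticalestbounded} (for $q=\infty$) by $C(q)\|a\|_{L^5(\R^3\times(0,T))}\|u\|_{L^q}$, respectively $C_{univ}(\sup_s s^{1/5}\|a(\cdot,s)\|_{L^5})\|u\|_{L^q}$. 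The second piece $\int_0^t\nabla e^{(t-s)\Delta}\mathcal{R}_i\mathcal{R}_j(u_ia_j+u_ja_i)\,ds$ has the same structure — after using the $L^q\to L^q$ boundedness of the Riesz transforms for $q\in(1,\infty)$ to absorb the $\mathcal{R}_i\mathcal{R}_j$, it is again of the form $L(\nabla\cdot(\cdot))$ applied to a product of $u$ and $a$ — so Proposition \ref{tensorproductheatest} applies again (one must be slightly careful at $q=\infty$, handled directly by \eqref{criticalestbounded} combined with the Riesz bounds on the relevant intermediate space, or by the same pointwise kernel estimate used in Corollary \ref{MorreyboundsOseen}). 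Collecting these, one gets
\begin{equation*}
\|L_a(u)\|_{L^q(\R^3\times(0,T))}\leq C(q)\Big(\|a\|_{L^5(\R^3\times(0,T))}+\sup_{0<s<T}s^{1/5}\|a(\cdot,s)\|_{L^5(\R^3)}\Big)\|u\|_{L^q(\R^3\times(0,T))}.
\end{equation*}
Choosing $\ep(q):=\tfrac1{2C(q)}$ gives $\|L_a\|_{L^q\to L^q}\leq \tfrac12<1$ under \eqref{smallnessforinvertibility}, whence $I-L_a$ is invertible with $(I-L_a)^{-1}=\sum_{k\ge 0}L_a^k$.

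\textbf{The consistency statement \eqref{equalonintersection}.} For this, suppose $a$ satisfies the smallness bound \eqref{smallnessforinvertibilityintersect} with respect to $\min(\ep(p),\ep(q))$, so that $I-L_a$ is invertible on both $L^p$ and $L^q$. Take $f\in L^p\cap L^q$. Then the Neumann series $\sum_{k\ge 0}L_a^k f$ converges in $L^p$ and also in $L^q$; since both limits agree with the partial sums $\sum_{k=0}^N L_a^k f$ (which lie in $L^p\cap L^q$), and convergence in each of $L^p$ and $L^q$ implies convergence in measure along a subsequence to a common limit, the two limits coincide as elements of $L^p\cap L^q$. Therefore $(I-L_a)^{-1}_{L^p\to L^p}f=(I-L_a)^{-1}_{L^q\to L^q}f$ for all $f\in L^p\cap L^q$, which is \eqref{equalonintersection}.

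\textbf{Main obstacle.} The only genuinely delicate point is the endpoint $q=\infty$, where the Riesz transforms are not bounded on $L^\infty$ and one cannot naively split off $\mathcal{R}_i\mathcal{R}_j$. There one should instead treat the composite operator $\int_0^t \nabla e^{(t-s)\Delta}\mathcal{R}_i\mathcal{R}_j(\cdot)\,ds$ as a single convolution operator whose kernel is bounded by $C(|x|^2+t)^{-2}$ (this is exactly the fact invoked in the proof of Corollary \ref{MorreyboundsOseen}, citing \cite{LR02}), and then combine the pointwise bound $|u_ia_j+u_ja_i|\leq 2\|u\|_{L^\infty}|a|$ with the temporal decay $\sup_s s^{1/5}\|a(\cdot,s)\|_{L^5}$ and Hölder in space-time to reproduce the bound \eqref{criticalestbounded}-type estimate with the factor $\sup_s s^{1/5}\|a(\cdot,s)\|_{L^5}$ in front of $\|u\|_{L^\infty}$. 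Everything else is routine functional analysis.
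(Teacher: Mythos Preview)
Your proposal is correct and follows essentially the same route as the paper: the paper states only that ``the proof essentially follows from Proposition~\ref{tensorproductheatest}'' and explains \eqref{equalonintersection} in the subsequent Remark via Picard iterates, which is exactly your Neumann series argument. Your treatment is in fact more detailed than the paper's, and your identification of the $q=\infty$ endpoint issue (Riesz transforms unbounded on $L^\infty$) together with its resolution via the pointwise kernel bound $C(|x|^2+t)^{-2}$ for the composite operator $\nabla e^{t\Delta}\mathcal R_i\mathcal R_j$ is spot on; note that the same kernel-bound argument also covers the other endpoint $q=5/4$, where the product $u_ia_j$ lands in $L^1$ and Riesz boundedness again fails.
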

\begin{remark}
Let us comment on showing \eqref{equalonintersection}.
Suppose $f\in L^{p}(\mathbb{R}^3 \times (0,T))\cap L^{q}(\mathbb{R}^3 \times (0,T))$ with $p,q\in [\frac{5}{4}, \infty]$.
Define the Picard iterates
\begin{equation}\label{zeroPicard}
P_{0}=f
\end{equation}
\begin{equation}\label{kthPicard}
P_{k+1}=f+L_{a}(P_{k}).
\end{equation}
Under the smallness assumption on $a$, we have 
\begin{equation}\label{picardLpconverg}
\lim_{k\rightarrow\infty}\|P_{k}- (I-L_{a})^{-1}_{L^{p}\rightarrow L^{p}}(f)\|_{L^{p}(\mathbb{R}^3\times (0,T))}=0
\end{equation}
and
\begin{equation}\label{picardLqconverg}
\lim_{k\rightarrow\infty}\|P_{k}- (I-L_{a})^{-1}_{L^{q}\rightarrow L^{q}}(f)\|_{L^{q}(\mathbb{R}^3\times (0,T))}=0.
\end{equation}
Thus we obtain (\ref{equalonintersection}).
\end{remark}

We now turn to the proof of the main result of this section.
\begin{proof}[Proof of Theorem \ref{localboundednesscriticalStokes}]
\textbf{\underline{Step 1:} Spatial localisation.}\\
Let $\varphi\in C_{0}^{\infty}(B_{\frac{1}{2}}(0))$ be such that $0\leq \varphi\leq 1$ and $\varphi=1$ on $B_{\frac{1}{3}}(0)$. Define $\tilde{v}:= \varphi v$. Then we have that $\tilde{v}$ satisfies the following equation in the sense of distributions in $\mathbb{R}^3\times (0,1)$:
\begin{align*}\label{localinspace}
\begin{split}
\partial_{t}\tilde{v}-\Delta{\tilde{v}}+\nabla\cdot(a\otimes \tilde{v}+\tilde{v}\otimes a)= -&\nabla\cdot(\varphi v\otimes v)+(\nabla\varphi\cdot v)v+(\nabla\varphi\cdot v)a+(a\cdot\nabla\varphi)v\\-&\nabla(q\varphi)+q\nabla\varphi- {2\nabla\cdot(v\otimes\nabla \varphi)}+v\Delta\varphi.
\end{split}
\end{align*}
\begin{equation*}\label{tildeuinitial}
\tilde{v}(\cdot,0)=0.
\end{equation*}

\noindent\textbf{\underline{Step 2:} Decomposing the pressure.}\\
First we use a classical decomposition of the pressure namely
\begin{equation*}\label{pressuredecomp1}
q= \tilde{p}+ h(\cdot,t)+\hat{p}.
\end{equation*}
Here, 
\begin{equation*}\label{tildepdefine}
\tilde{p}:=\mathcal{R}_{i}\mathcal{R}_{j}((a_{i}v_{j}+a_{j}v_{i}) \chi_{B_{2}(0)})
\end{equation*}
and
\begin{equation*}\label{hatpdefine}
{\hat{p}:=\mathcal{R}_{i}\mathcal{R}_{j}((v_{i}v_{j}) \chi_{B_{2}(0)})}.
\end{equation*}
Using that $a\in L^{5}(\mathbb{R}^3 \times (0,1))$ and $v\in L^{\frac{10}{3}}(B_{2}(0) \times (0,1))$ we have
\begin{equation}\label{tensor}
a_{i}v_{j}\in L^{2}(B_{2}(0)\times (0,1)).
\end{equation}
Thus,\begin{equation}\label{ptildespaces}
\tilde{p}\in L^{2}(\mathbb{R}^3 \times (0,1)).
\end{equation}
{Using that $v\in C_{w}([0,1]; L^{2}(B_{2}(0)))$ and $\nabla v\in L^{2}((0,1)\times B_{2}(0))$ we have}
\begin{equation}\label{phatspaces}
\hat{p}\in L^{r}((0,1); L^{\lambda}(B_{2}(0)))\,\,\,\,\textrm{with}\,\,\,\,\,\frac{2}{r}+\frac{3}{\lambda}=3\,\,\,\,\, \lambda\in (1,3].
\end{equation}
Next we see that $h(\cdot,t)$ is a harmonic function in $B_{\frac{3}{2}}(0)$. Using this, we obtain for $k\in\mathbb{N}\setminus\{0\}$ that
\begin{align}\label{harmonicest}
\begin{split}
\|\nabla^{k} h\|_{L^{1}(0,1; L^{\infty}(B_{1}(0)))}\leq\ & C(k)(\|v\|_{L^{\infty}(0,1; L^{2}(B_{2}(0)))}+\|\nabla v\|_{L^{2}(B_{2}(0)\times (0,1))}\\
&+\|q\|_{L^{1}(0,1; L^{1}(B_{2}(0)))}+\|a\|_{L^{5}(\mathbb{R}^3\times (0,1))}\|v\|_{L^{\infty}(0,1; L^{2}(B_{2}(0)))}).
\end{split}
\end{align}
Now, we apply Lemma \ref{pressure localisation} to $\tilde{p}$ and $\hat{p}$  to get
\begin{align*}\label{tildeplocalisation}
\begin{split}
\varphi\tilde{p}=& -N\ast(\Delta \varphi \tilde{p})-N\ast((a_{i}v_{j}+a_{j}v_{i})\partial_{i}\partial_{j}\varphi)-2\partial_{j}N\ast(\tilde{p}\partial_{j}\varphi)\\&-2\partial_{j}N\ast((a_{i}v_{j}+a_{j}v_{i})\partial_{i}\varphi) +\mathcal{R}_{i}\mathcal{R}_{j}(a_{i}\tilde{v}_{j}+a_{j}\tilde{v}_{i}),
\end{split}
\end{align*}
\begin{align*}
\varphi\hat{p}=& -N\ast(\Delta \varphi \hat{p})-N\ast((v_{i}v_{j})\partial_{i}\partial_{j}\varphi)-2\partial_{j}N\ast(\hat{p}\partial_{j}\varphi)\\&-2\partial_{j}N\ast((v_{i}v_{j})\partial_{i}\varphi) +\mathcal{R}_{i}\mathcal{R}_{j}(\varphi v_{i}{v}_{j})
\end{align*}
Thus in $\mathbb{R}^3\times (0,2)$ we have,
\begin{equation}\label{tildeuequation2}
\partial_{t}\tilde{v}-\Delta \tilde{v}+ \nabla\cdot(a\otimes \tilde{v}+\tilde{v}\otimes a)+\nabla\mathcal{R}_{i}\mathcal{R}_{j}(a_{i}\tilde{v}_{j}+a_{j}\tilde{v}_{i})=\sum_{i=1}^{6} F_{i}+ \nabla\cdot(F_{7})\,
\end{equation}
\begin{equation}\label{tildeuequation2IC}
\tilde{v}(\cdot,0)=0.
\end{equation}
Here,
\begin{align*}\label{F1def}
\begin{split}
F_{1}:=& (\nabla\varphi\cdot v)a+(a\cdot\nabla\varphi)v+\tilde{p}\nabla\varphi+\nabla N\ast(\Delta \varphi \tilde{p})+\nabla N\ast((a_{i}v_{j}+a_{j}v_{i})\partial_{i}\partial_{j}\varphi)\\
&+2\nabla\partial_{j}N\ast(\tilde{p}\partial_{j}\varphi)+2\nabla\partial_{j}N\ast((a_{i}v_{j}+a_{j}v_{i})\partial_{i}\varphi),
\end{split}
\end{align*}
\begin{equation*}\label{F2def}
F_{2}:= v\Delta \varphi,
\end{equation*}
\begin{equation*}\label{F3def}
F_{3}:= h{\nabla} \varphi-\nabla(\varphi h),
\end{equation*}
\begin{equation*}\label{F4def}
{F_{4}:=\nabla N\ast(\Delta\varphi \hat{p})+ \nabla N\ast(v_{i}v_{j}\partial_{i}\partial_{j}\varphi)},
\end{equation*}
\begin{equation*}\label{F5def}
{F_{5}:=(\nabla\varphi\cdot v)v+ 2\nabla(\partial_{j}N\ast((v_iv_j\partial_{i}\varphi))+2\nabla\partial_{j}N\ast(\hat{p}\partial_{j}\varphi)},
\end{equation*}
\begin{equation*}\label{F6def}
{F_{6}:=-\nabla\cdot(\varphi v\otimes v)-\nabla(\mathcal{R}_{i}\mathcal{R}_{j}(\varphi v_{i}v_{j}))}
\end{equation*}
\begin{equation*}\label{F7def}
F_{7}:=-2v\otimes \nabla\varphi.
\end{equation*}

\noindent\textbf{\underline{Step 3:} First linear bootstrap.}\\
Using (\ref{tensor}) and (\ref{ptildespaces}) (along with the Hardy-Littlewood-Sobolev inequality and {Calder\'{o}n-Zygmund estimates}) we see that
\begin{equation*}\label{F1spaces}
F_{1}\in L^{2}(\mathbb{R}^3 \times (0,1)).
\end{equation*}
{Here, we have used that $\varphi$ has compact support, which implies that $\Delta\varphi \tilde{p}$ and $(a_{i}v_{j}+v_{i}a_{j})\partial_{i}\partial_{j}\varphi$ belong to $L^{2}(0,1; L^{\frac{6}{5}}(\mathbb{R}^3)).$} 
Thus, using Proposition \ref{estimatesnondivergence} we see that
\begin{equation*}\label{LF1spaces}
L(F_{1})\in L^{2}(\mathbb{R}^3 \times (0,1))\cap L^{10}(\mathbb{R}^3 \times (0,1)).
\end{equation*}
Using that $u$ is in the energy class, we see that
\begin{equation*}\label{F2F4spaces}
F_{2}, F_{7} \in L^{2}(\mathbb{R}^3 \times (0,1))\cap L^{\frac{10}{3}}(\mathbb{R}^3\times (0,1)).
\end{equation*}
Then we apply Propositions \ref{estimatesnondivergence}-\ref{estimatesdivergence} {and energy estimates} to see 
\begin{equation*}\label{LF2F4spaces}
L(F_{2}), L(\nabla\cdot F_{7}) \in L^{2}(\mathbb{R}^3 \times (0,1))\cap {L^{10}(\mathbb{R}^3\times (0,1))}.
\end{equation*}
{Using (\ref{harmonicest}) and the fact that $\varphi$ has compact support implies $$F_{3} \in L^{1}(0,1; L^{\infty}(\mathbb{R}^3))\cap L^{1}(0,1; L^{2}(\mathbb{R}^3)).$$  Using Proposition \ref{estimatesnondivergence} then gives}
\begin{equation*}\label{LF3spaces}
L(F_{3})\in L^{2}(\mathbb{R}^3 \times (0,1))\cap L^{\infty}(\mathbb{R}^3 \times (0,1)).
\end{equation*}
{Using (\ref{phatspaces}) and the Hardy-Littlewood-Sobolev inequality gives}
\begin{equation*}\label{F4spaces}
{F_{4}\in L^{r}(0,1; L^{\tilde{\lambda}}(\mathbb{R}^3))\,\,\,\,\,\textrm{with}\,\,\,\,\,\, \frac{2}{r}+\frac{3}{\tilde{\lambda}}=2\,\,\,\,\tilde{\lambda}\in \Big(\frac{3}{2}, \infty\Big).}
\end{equation*}
{Using this and Proposition \ref{estimatesnondivergence} we get}
\begin{equation*}\label{LF4spaces}
{L(F_{4})\in L^{10}(\mathbb{R}^3 \times (0,1))\cap L^{2}(\mathbb{R}^3 \times (0,1)).}
\end{equation*}
{Using (\ref{phatspaces}) and Calder\'{o}n-Zygmund estimates  gives}
\begin{equation*}\label{F5spaces}
{F_{5}\in L^{r}(0,1; L^{{\lambda}}(\mathbb{R}^3))\,\,\,\,\,\textrm{with}\,\,\,\,\,\, \frac{2}{r}+\frac{3}{{\lambda}}=3\,\,\,\,{\lambda}\in (1,3).}
\end{equation*}
{So Proposition \ref{estimatesnondivergence} and energy estimates give}
\begin{equation*}\label{LF5spaces}
{L(F_{5}) \in L^{5}(\mathbb{R}^3 \times (0,1))\cap L^{2}(\mathbb{R}^3\times (0,1)).}
\end{equation*}
{Next, using (\ref{MorreyuL3}), Corollary \ref{MorreyboundsOseen} and the fact that $\varphi$ has compact support, we obtain:}
\begin{equation*}\label{LF6spaces}
{L(F_{6})\in L^{\infty}(\mathbb{R}^3 \times (0,S^*))\cap L^{2}(\mathbb{R}^3 \times (0,S^*)).}
\end{equation*}
So we have
\begin{equation}\label{firstbootstrapforcespaces}
{\sum_{i=1}^{6} L(F_{i})+ L(\nabla\cdot F_{7})\in L^{2}(\mathbb{R}^3 \times (0,S^*))\cap L^{5}(\mathbb{R}^3 \times (0,S^*)).}
\end{equation}
Since $\tilde{v}\in L^{2}(\mathbb{R}^3 \times (0,S^*))$ satisfies (\ref{tildeuequation2})-(\ref{tildeuequation2IC}), we have
\begin{equation*}\label{uoperatorfixedpoint}
\tilde{v}=(I-L_{a})^{-1}_{L^2 \rightarrow L^2}(\sum_{i=1}^{6} L(F_{i})+ L(\nabla\cdot F_{7}))
\end{equation*}
Using Lemma \ref{operatorinvertiblity} and (\ref{firstbootstrapforcespaces}) we see
\begin{equation*}\label{uoperatorfixedpointL10}
{\tilde{v}=(I-L_{a})^{-1}_{L^{5} \rightarrow L^{5}}(\sum_{i=1}^{6} L(F_{i})+ L(\nabla\cdot F_{7}))}
\end{equation*}
In particular, $\tilde{v}\in L^{5}(\mathbb{R}^3 \times (0,S^*))$, which represents a gain in integrability.

\noindent\textbf{\underline{Step 4:} Second linear bootstrap.}\\
Without loss of generality we now assume {$v \in C_{w}([0,S^*]; L^{2}(B_{2}(0))\cap L^{5}(B_{2}(0)\times (0,S^*))$, $\nabla v \in L^{2}(B_{2}(0)\times [0,S^*]). $}
In this case, one can show that 
\begin{equation*}\label{F145spaces2}
{F_{1}, F_{4}, F_{5}\in L^{2}(\mathbb{R}^3 \times (0,S^*))\cap L^{\frac{5}{2}}(\mathbb{R}^3 \times (0,S^*))}
\end{equation*}
and
\begin{equation*}\label{F27spaces2}
{F_{2},F_{7} \in L^{2}(\mathbb{R}^3 \times (0,S^*))\cap L^{5}(\mathbb{R}^3 \times (0,S^*)).}
\end{equation*}
Using Propositions \ref{estimatesnondivergence}-\ref{estimatesdivergence} we see that
\begin{equation*}\label{secondbootstrapforcespaces}
{\sum_{i=1}^{6} L(F_{i})+ L(\nabla\cdot F_{7})\in L^{2}(\mathbb{R}^3 \times (0,S^*))\cap L^{10}(\mathbb{R}^3 \times (0,S^*)).}
\end{equation*}
Verbatim reasoning to the first linear bootstrap yields that
$\tilde{v}\in L^{10}(\mathbb{R}^3\times (0,S^{*}))$.

\noindent\textbf{\underline{Step 5:} Third linear bootstrap.}\\
Without loss of generality we now assume $v \in C_{w}([0,S^*]; L^{2}(B_{2}(0))\cap L^{10}(B_{2}(0)\times (0,S^*))$, $\nabla v\in L^{2}(B_{2}(0)\times [0,S^*]). $
In this case, one can show that the forcing terms $F_{i}$  belong to the following spaces:
\begin{equation*}\label{F1spaces3}
F_{1}\in L^{2}(\mathbb{R}^3 \times (0,S^*))\cap L^{\frac{10}{3}}(\mathbb{R}^3 \times (0,S^*))
\end{equation*}
\begin{equation*}\label{F2F7spaces3}
{F_{2}, F_{7}\in L^{2}(\mathbb{R}^3 \times (0,S^*))\cap L^{10}(\mathbb{R}^3 \times (0,S^*))}
\end{equation*}
and
\begin{equation*}\label{F4F5spaces3}
{F_{4}, F_{5}\in L^{2}(\mathbb{R}^3 \times (0,S^*))\cap L^{5}(\mathbb{R}^3 \times (0,S^*)).}
\end{equation*}
Using Propositions \ref{estimatesnondivergence}-\ref{estimatesdivergence} we see that
\begin{equation*}\label{thirdbootstrapforcespaces}
{\sum_{i=1}^{6} L(F_{i})+ L(\nabla\cdot F_{7})\in L^{2}(\mathbb{R}^3 \times (0,S^*))\cap L^{\infty}(\mathbb{R}^3 \times (0,S^*)).}
\end{equation*}
Verbatim reasoning to the first linear bootstrap yields that
$\tilde{v}\in L^{\infty}(\mathbb{R}^3\times (0,S^*))$.  This concludes the proof of the first part of the theorem.

\noindent\textbf{\underline{Step 6:} Final linear bootstraps.}\\
It remains to see the H\"older continuity up to initial time \eqref{e.holdertheo}. For this it is enough to assume $q\in L^r(B_2(0)\times(-1,1))$ for $r>0$, $r>1$. To fix the ideas, we assume $q\in L^\frac32(B_2(0)\times(-1,1))$. Then $h\in L^\frac32(B_2(0)\times(-1,1))$, and since $h(\cdot,t)$ is harmonic, we also have $h,\ \nabla h\in L^\frac32((0,1),L^\infty(\R^3)))$. 
From the fact that $\tilde v\in L^2(\R^3\times(0,S^*))\cap L^\infty(\R^3\times(0,S^*))$ and applying the H\"older estimates of Proposition \ref{prop.holderbootstrap}, we get that there exists $\nu\in(0,\frac12)$ such that
\begin{equation*}
\sum_{i=1}^6L(F_i)+L(\nabla\cdot F_7)\in C^{0,\nu}_{par}(B_{\frac13}\times[0,S^*]).
\end{equation*}
Using the invertibility of $I-L_a$ on $L^\infty(\R^3\times(0,t))$, we then get that for all $t\in (0,S^*)$,
\begin{align*}
\|\tilde v\|_{L^\infty(\R^3\times(0,t))}=\ &\|(I-L_a)^{-1}\big(\sum_{i=1}^6L(F_i)+L(\nabla\cdot F_7)\big)\|_{L^\infty(\R^3\times(0,t))}\\
\leq\ & C\big(\sum_{i=1}^6\|L(F_i)\|_{L^\infty(\R^3\times (0,t))}+\|L(\nabla\cdot F_7)\|_{L^\infty(\R^3\times (0,t))}\\
\leq\ & Ct^\nu,
\end{align*}
where here the constant $C$ is not universal, and depends in particular on the quantities in the right hand side of \eqref{e.holdertheo}. Notice that thanks to the weak continuity in time of $\tilde v$, we have that the previous bound implies 
\begin{equation*}
\sup_{s\in (0,S^*)}s^{-\nu}\|\tilde v(\cdot,s)\|_{L^\infty}<\infty.
\end{equation*} 
Hence, $a\otimes\tilde v\in L^{r}_t(0,S^*;L^5(\R^3))$, with $r\in (5,\frac{5}{1-5\nu})$. Therefore, one can apply the estimate \eqref{e.holdertwo} of Proposition \ref{prop.holderbootstrap} with $r\in (5,\frac{5}{1-5\nu})$ and $p=5$. Hence we get that there exists $\nu''\in (0,\frac12)$ such that $\tilde v\in C^{0,\nu''}_{par}(B_{\frac13}\times[0,S^*])$.
\end{proof}

\section{Proof of the main results}
\label{sec.proof}

This section is devoted to the proof of the main theorems. We first prove the local in space regularity near initial time, i.e. Theorem \ref{theo.main}. Then, we prove the concentration result, i.e. Theorem \ref{theo.conc}.

\subsection{Proof of Theorem \ref{theo.main}}

In this section, we choose the parameters in Theorem \ref{theo.morrey} as follows: $\delta=1$ and $E=1$. Hence there exists a universal constant $\ep_*=\ep_*(1,1)$ as given in Theorem \ref{theo.morrey}. This constant $\ep_*$ is now fixed for the remainder of this section. We let $\ep=\min(\ep_*,\ep_{**})$, where $\ep_{**}$ is the universal constant in Theorem \ref{localboundednesscriticalStokes}. 
The constant $\gamma_{univ}$ appearing in Theorem \ref{theo.main} will be chosen below, depending only on $\ep$, see \eqref{e.estgamma}.

Let $M\in(0,\infty)$ be fixed for the rest of this section. Let $u$ be any local energy solution with initial data $u_0$ as in Theorem \ref{theo.main}. Notice that by the bound of Proposition \ref{prop.lews}, we have
\begin{equation}\label{e.aprioriu}
\sup_{s\in(0,S_{lews})}\sup_{\bar x\in\R^3}\int\limits_{B_1(\bar x)}\frac{|u(x,s)|^2}{2}\, dx+\sup_{\bar x\in\R^3}\int\limits_0^{S_{lews}}\int\limits_{B_1(\bar x)}|\nabla u(x,s)|^2\, dx\, ds\leq K_1M^2
\end{equation}
with  $S_{lews}:=c_1^2\min(M^{-4},1)$. Furthermore, for the pressure associated to $u$ we have
\begin{equation}\label{pressureapriori}
\int\limits_{0}^{S_{lews}} \int\limits_{B_{\frac{3}{2}}(\bar{x})} |p-C_{\bar{x}}(t)|^{\frac{5}{3}} dxds\leq C(1+M^2)^\frac53.
\end{equation}

In order to take advantage of the fact that $u_0\in L^3(B_2(0))$, we decompose the initial data in the following way: $u_0=u_{0,a}+u_{0,b}$ with
\begin{equation*}
\supp u_{0,a}\subset B_\frac32(0),\quad u_{0,a}=u_0\ \mbox{on}\ B_1(0),\quad \nabla\cdot u_{0,a}=0,
\end{equation*}
and
\begin{equation*}
\|u_{0,a}\|_{L^3(\R^3)}\leq (1+K_3K_4)\|u_0\|_{L^3(B_2(0))},\quad \|u_{0,b}\|_{L^2_{uloc}(\R^3)}\leq (1+K_3K_4)\|u_0\|_{L^2_{uloc}(\R^3)},
\end{equation*}
where $K_3$ is a universal constant given by \eqref{e.cutoff} and $K_4$ by \eqref{e.bogo} below. 
This decomposition can be done in a standard way by cutting-off and using Bogovskii's operator \cite[Chapter III]{Galdibook}. Indeed, let $\chi\in C^\infty_c(\R^3)$ be a cut-off function such that
\begin{equation}\label{e.cutoff}
0\leq\chi\leq 1,\quad \supp\chi\subset B_2(0),\quad\chi=1\ \mbox{on}\ B_1(0),\quad |\nabla\chi|\leq K_3,
\end{equation}
where $K_3\in(0,\infty)$ is a universal constant. Then, we introduce $\tilde u_{0,a}$ given by Bogovskii's lemma, such that
\begin{align}\label{e.bogo}
\begin{split}
&\nabla\cdot\tilde u_{0,a}=u_0\cdot\nabla\chi,\quad \tilde u_{0,a}=0\ \mbox{on}\ \partial(B_2(0)\setminus B_1(0)),\\
&\|\tilde u_{0,a}\|_{L^3(B(0,2)\setminus B(0,1))}\leq K_4\|u_0\cdot\nabla\chi\|_{L^3(B_2(0)\setminus B_1(0))}\leq K_3K_4\|u_0\|_{L^3(B_2(0))},\\
&\|\tilde u_{0,a}\|_{L^2(B(0,2)\setminus B(0,1))}\leq K_4\|u_0\cdot\nabla\chi\|_{L^2(B_2(0)\setminus B_1(0))}\leq K_3K_4\|u_0\|_{L^2(B_2(0))},
\end{split}
\end{align}
where $K_4\in(0,\infty)$ is a universal constant. We extend $\tilde u_{0,a}$ by $0$ and let
\begin{equation*}
u_{0,a}=u_0\chi-\tilde u_{0,a}.
\end{equation*}
It is easy to check that this yields the decomposition above.

We now consider the unique mild solution $a$ associated to $u_{0,a}$. Existence of $a$ is recalled in Proposition \ref{prop.mild3}: we take  $\gamma_{univ}\in (0,\infty)$ accordingly. Without loss of generality, we assume that $\gamma_{univ}>0$ is taken sufficienty small such that
\begin{equation}\label{e.estgamma}
K_0(1+K_3K_4)\gamma_{univ}\leq \frac12\min(\ep,c^{-1}),
\end{equation}
where $K_0$ is given in Proposition \ref{prop.mild3}, $K_3$ in \eqref{e.cutoff}, $K_4$ in \eqref{e.bogo} and $c$ in \eqref{e.alhs}. Then 
\begin{equation}
\|a\|_{L^5_{t,x}}\leq \frac12\min(\ep,c^{-1})\quad\mbox{and}\quad \sup_{s\in(0,\infty)}s^\frac15\|a(\cdot,s)\|_{L^5}\leq\frac{\ep}{2}.
\end{equation}
Therefore, the drift $a$ satisfies the smallness conditions \eqref{e.epmorreya} and \eqref{smalldrift}. For proving Lemma \ref{lem.smallness} below, we also require estimates for the local energy and pressure for $a$. The mild solution $a$ satisfies $$\sup_{s\in(0,\infty)} s^{\frac{1}{8}}\|a(\cdot,s)\|_{L^{4}(\mathbb{R}^3)}\leq K_{0}\|u_{0,a}\|_{L^{3}(\mathbb{R}^3)}.$$
Thus, 
\begin{equation}\label{e.eqRiRj}
\|\mathcal{R}_{i}\mathcal{R}_{j}(a_{i}a_{j})\|_{L^{2}_{t,x}(\mathbb{R}^3\times (0,2))}\leq K_{0}^2\|u_{0,a}\|_{L^{3}(\mathbb{R}^3)}^2\,\,\,\textrm{and}
\end{equation}
$$ \|a-e^{t\Delta} u_{0,a}\|_{L^{\infty}_{t}L^{2}_{x}(\mathbb{R}^3\times (0,2))}^2+\|\nabla(a-e^{t\Delta} u_{0,a})\|_{L^{2}_{t}L^{2}_{x}(\mathbb{R}^3\times (0,2))}^2\leq K_{0}^2\|u_{0,a}\|_{L^{3}(\mathbb{R}^3)}^2. $$
Hence,
\begin{align}
\begin{split}\label{e.leia}
&\sup_{x} \big(\|a\|_{L^{\infty}_{t}L^{2}_{x}(B_{1}(x)\times (0,2))}+\|\nabla a\|_{L^{2}_{t}L^{2}_{x}(B_{1}(x)\times (0,2))}\big)\\
\leq\ & K_{0}^2\|u_{0,a}\|_{L^{3}(\mathbb{R}^3)}^2+ K_0\|u_{0,a}\|_{L^{3}(\mathbb{R}^3)}. 
\end{split}
\end{align}
We decompose $u=v+a$. The perturbation $v$ is a local energy solution to the perturbed Navier-Stokes system \eqref{e.pertns}. We have that $v(\cdot,0)|_{B_1(0)}=0$, hence we can get smallness of the local energy of $v$ for some short time $S^*(M)>0$. This is the purpose of the following lemma, which is the main ingredient for proving Theorem \ref {theo.main}. Such a result plays also a key r\^ole in the paper of Jia and \v{S}ver\'{a}k \cite[Theorem 3.1]{JS14}.

\begin{lemma}\label{lem.smallness}
There exists $S^*(M)\in(0,\frac14]$ such that 
\begin{equation}\label{e.locenEv}
\sup_{0<s<S^*(M)}\int\limits_{B_1(0)}|v(x,s)|^2dx+\int\limits_{B_1(0)\times (0,S^*(M))}|\nabla v|^2dxds\leq 1,
\end{equation}
and
\begin{equation}\label{e.epmorreyv}
\int\limits_{B_1(0)\times (0,S^*(M))}|v|^3+|q|^\frac32dxds\leq\ep_*.
\end{equation}
\end{lemma}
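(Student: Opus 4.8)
The plan is to leverage that, by the construction of the decomposition $u_0=u_{0,a}+u_{0,b}$, the perturbation $v=u-a$ has zero initial data on $B_1(0)$; combined with uniform-in-short-time a priori bounds on $u$, $a$ and the associated pressure, the required smallness for small $S^*(M)$ will follow from strong attainment of the initial data together with absolute continuity of the Lebesgue integral, with no use of the local energy inequality and no delicate cut-off near the lateral boundary.

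The first task is to collect a priori bounds on $v$ and on the perturbed pressure $q$ on the fixed parabolic set $B_1(0)\times(0,S_{lews})$, with constants depending only on $M$. From \eqref{e.aprioriu} one has $u\in L^\infty(0,S_{lews};L^2(B_1(0)))$ and $\nabla u\in L^2(B_1(0)\times(0,S_{lews}))$, hence $u\in L^{10/3}(B_1(0)\times(0,S_{lews}))\subset L^3(B_1(0)\times(0,S_{lews}))$ by Gagliardo--Nirenberg interpolation. For $a$, Proposition \ref{prop.mild3} together with the estimates recalled in and before \eqref{e.leia} and with $\|u_{0,a}\|_{L^3}\leq(1+K_3K_4)\gamma_{univ}$ give $a\in C([0,\infty);L^3(\R^3))$ with $a(0)=u_{0,a}$, as well as $a\in L^\infty(0,2;L^2(B_1(0)))\cap L^2(0,2;H^1(B_1(0)))$ and $a\in L^5(\R^3\times(0,2))\subset L^3(B_1(0)\times(0,2))$. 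Subtracting, $v$ lies in $L^\infty(0,S_{lews};L^2(B_1(0)))\cap L^2(0,S_{lews};H^1(B_1(0)))\cap L^3(B_1(0)\times(0,S_{lews}))$. For the pressure, up to an additive function of time the local pressure $q$ of \eqref{e.pertns} equals $(p-C(t))-\mathcal R_i\mathcal R_j(a_ia_j)$ on $B_{3/2}(0)$, where $p$ is the pressure of $u$ and $C$ a function of time; by \eqref{pressureapriori} the first summand is in $L^{5/3}$ and by \eqref{e.eqRiRj} the second in $L^2(\R^3\times(0,2))$, so $q\in L^{3/2}(B_1(0)\times(0,S_{lews}))$ with an $M$-dependent bound.

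With these inputs, the conclusion is soft. Since $u$ is a local energy solution in the sense of Definition \ref{def.lews} it attains $u_0$ strongly in $L^2_{loc}(\R^3)$, while $a(t)\to u_{0,a}$ in $L^3(\R^3)$, hence in $L^2(B_1(0))$, as $t\to 0^+$; because $u_{0,a}=u_0$ on $B_1(0)$ this forces $v(\cdot,0)=0$ on $B_1(0)$ and $\|v(\cdot,t)\|_{L^2(B_1(0))}\to 0$ as $t\to 0^+$, which is precisely the assertion that $\sup_{0<s<\tau}\int_{B_1(0)}|v(x,s)|^2\,dx\to 0$ as $\tau\to 0^+$. Since $|\nabla v|^2$, $|v|^3$ and $|q|^{3/2}$ are integrable on the fixed set $B_1(0)\times(0,S_{lews})$, absolute continuity of the integral in the time variable gives $\int_{B_1(0)\times(0,\tau)}\big(|\nabla v|^2+|v|^3+|q|^{3/2}\big)\,dx\,ds\to 0$ as $\tau\to 0^+$. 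One then fixes $S^*(M)\in(0,\tfrac14]$, $S^*(M)\leq S_{lews}$, small enough that the left-hand side of \eqref{e.locenEv} is $\leq 1$ and that of \eqref{e.epmorreyv} is $\leq\ep_*$. The only point requiring genuine care is thus the first task above: making the a priori estimates uniform down to $t=0^+$ with an explicit $M$-dependence and, in particular, identifying the correct local pressure for \eqref{e.pertns} (matching the local pressure of $u$ from \eqref{pressureapriori} with the canonical pressure $\mathcal R_i\mathcal R_j(a_ia_j)$ of the mild solution $a$) and invoking the $L^3$ mild-solution theory for $a$ to obtain both its strong continuity at $t=0$ and the bounds \eqref{e.leia} and \eqref{e.eqRiRj}.
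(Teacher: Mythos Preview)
Your approach yields the qualitative smallness for \emph{some} $S^*>0$, but it does not secure the key feature that $S^*$ depends only on $M$. In the context of Theorem~\ref{theo.main} (see the text preceding the lemma), the time $S^*(M)$ must be chosen uniformly over all local energy solutions $u$ with $\|u_0\|_{L^2_{uloc}}\leq M$ and $\|u_0\|_{L^3(B_2(0))}\leq\gamma_{univ}$; this uniformity is essential for the application to Theorem~\ref{theo.conc}, where one rescales by $\lambda=\sqrt{(T^*-t_0)/S^*(M)}$ and needs $S^*(M)$ to depend only on the Type~I bound $M$. Your soft arguments---strong attainment of the initial data in $L^2_{loc}$ and absolute continuity of the Lebesgue integral---carry no rate of decay controllable by $M$ alone: the speed at which $\|v(\cdot,t)\|_{L^2(B_1(0))}\to 0$, or at which $\int_0^\tau\!\int_{B_1(0)}\big(|\nabla v|^2+|v|^3+|q|^{3/2}\big)dxds\to 0$, depends on the particular solution $u$, not merely on the size of $\|u_0\|_{L^2_{uloc}}$.

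The paper overcomes this by testing the local energy inequality for $v$ against a fixed cut-off $\phi^2$ supported in $B_{3/2}(0)$ and estimating each term on the right quantitatively: every term is bounded by $C(1+M^3)t^\alpha$ for some explicit $\alpha>0$, using only the a~priori bounds \eqref{e.aprioriu}, \eqref{pressureapriori}, \eqref{e.eqRiRj}, \eqref{e.leia} and the smallness $\|a\|_{L^5}\leq\frac12 c^{-1}$ (which allows one to absorb the critical terms $I_4+I_5$ into the left-hand side). This produces the explicit estimate $y(t)\leq 2C_*(1+M^3)t^{1/10}$, and hence an explicit choice of $S^*(M)$ from $M$ alone. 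Without such a quantitative step the lemma as stated---with $S^*$ a function of $M$ only---does not follow.
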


\begin{proof}[Proof of Lemma \ref{lem.smallness}]
Let $\phi\in C^\infty_c(\R^3)$ such that $0\leq\phi\leq 1$, $\supp\phi\subset B_\frac32(0)$, $\phi=1$ on $B_1(0)$, $|\nabla(\phi^2)|\leq K_5$ and $|\Delta(\phi^2)|\leq K_5'$ where $K_5,\ K_5'\in (0,\infty)$ are a universal constants. The local energy inequality then yields
\begin{align*}
y(t):=\ &\sup_{s\in(0,t)}\int\limits_{\R^3}|v(x,s)|^2\phi^2 dx+2\int\limits_{0}^t\int\limits_{\R^3}|\nabla v|^2\phi^2 dxds\\
\leq\ &\int\limits_{0}^t\int\limits_{\R^3}|v|^2\Delta(\phi^2) dxds+\int\limits_0^t\int\limits_{\R^3}|v|^2v\cdot\nabla(\phi^2) dxds+\int\limits_0^t\int\limits_{\R^3}2qv\cdot\nabla(\phi^2) dxds\\
&-\int\limits_0^t\int\limits_{\R^3}(a\cdot\nabla v)\cdot v\phi^2 dxds+\int\limits_0^t\int\limits_{\R^3}(a\otimes v):\nabla v \phi^2 dxds\\
&+\int\limits_0^t\int\limits_{\R^3}(a\otimes v):v\otimes\nabla(\phi^2) dxds\\
=\ &I_1+\ldots\ I_6,
\end{align*}
for all $t\in(0,S_{lews})$. From \eqref{e.aprioriu} and \eqref{e.leia}, we have that
\begin{align*}
\begin{split}
&\sup_{s\in(0,S_{lews})}\sup_{\bar x\in\R^3}\int\limits_{B_1(\bar x)}\frac{|v(x,s)|^2}{2}\, dx+\sup_{\bar x\in\R^3}\int\limits_0^{S_{lews}}\int\limits_{B_1(\bar x)}|\nabla v(x,s)|^2\, dx\, ds\\
\leq\ & K_1M^2+K_0^4(1+K_3K_4)^2\gamma_{univ}^2+K_0^2(1+K_3K_4)\gamma_{univ}\\
\leq\ &C(1+M^2),
\end{split}
\end{align*}
with $C\in(0,\infty)$ a universal constant. 
The terms $I_1$ and $I_2$ are energy subcritical, hence they decay in time:
\begin{align*}
|I_1|\leq\ & 
CK_5't^\frac25\Bigg(\int\limits_0^{S_{lews}}\int\limits_{B_2(0)}|v|^\frac{10}3dxds\Bigg)^\frac35\leq C(1+M^2)t^\frac25,\\
|I_2|\leq\ & 
CK_5t^\frac1{10}\Bigg(\int\limits_0^{S_{lews}}\int\limits_{B_2(0)}|v|^\frac{10}3dxds\Bigg)^\frac9{10}\leq C(1+M^3)t^\frac1{10},
\end{align*}
where $C\in(0,\infty)$ is a universal constant. 
The term $I_3$ requires to handle the pressure, so let us deal with it at the end of the proof. 
As for the other terms, we have
\begin{align}
|I_4|+|I_5|\leq\ & \|a\|_{L^5_{t,x}}\|\nabla v\phi\|_{L^2_{t,x}}\|v\phi\|_{L^\frac{10}3_{t,x}}\leq c\|a\|_{L^5}y(t)+C(1+M^2)t^\frac3{10},\label{e.alhs}\\
|I_6|\leq\ &CK_5t^\frac15\|a\|_{L^5_{t,x}}\|v\|_{L^\frac{10}3_{t,x}(B_2(0)\times(0,S_{lews}))}^2\leq C(1+M^2)t^\frac15,\nonumber
\end{align}
where $c,\ C\in(0,\infty)$ are universal constants. 
We now turn to the term $I_3$ involving the pressure. Thanks to \eqref{pressureapriori} and \eqref{e.eqRiRj} we can estimate the pressure $q$ and obtain
\begin{equation*}
\int\limits_{0}^{S_{lews}} \int\limits_{B_{\frac{3}{2}}(\bar{x})} |q-C_{\bar{x}}(t)|^{\frac{5}{3}} dxds\leq C\big((1+M^2)^\frac53+\|u_{0,a}\|_{L^3}^4\big).
\end{equation*}
Then, we get the bound,
\begin{equation*}
|I_3|\leq 
C(1+M^3)(t^\frac1{10}+t^\frac7{10}).
\end{equation*}
We finally obtain, for all $t\in (0,S_{lews})$,
\begin{equation}\label{e.smallen}
\frac{y(t)}{2}\leq(1-c\|a\|_{L^5_{t,x}})y(t)\leq C_*(1+M^3)t^\frac1{10},
\end{equation}
where $C_*\in(0,\infty)$ is a universal constant. 
Moreover, 
\begin{align*}
&\int\limits_{B_1(0)\times (0,t)}|v|^3+|q|^\frac32dxds\\
\leq\ &C_*'(t^\frac1{10}(y(t))^\frac32+M^2t)\\
\leq\ &C_*'((2C_*(1+M^3))^\frac32+1+M^2)t^\frac1{4},
\end{align*}
where $C_*'\in(0,\infty)$ is a universal constant. 
Choosing $S^*(M)\in(0,\frac14]$ such that
\begin{equation*}
2C_*(1+M^3)(S^*(M))^\frac1{10}\leq 1\quad\mbox{and}\quad C_*'((2C_*(1+M^3))^\frac32+1+M^2)(S^*(M))^\frac1{4}\leq\ep_*
\end{equation*}
concludes the proof of the lemma.
\end{proof}

To conclude the proof of Theorem \ref{theo.main}, it remains to extend $v$ and $a$ by zero on the time interval $(-1+S^*(M),0)$. Estimate \eqref{e.smallen} implies the strong convergence of the local energy to zero when $t\to 0$. Hence the extended $v$ is a local suitable solution to \eqref{e.pertns}, which satisfies the bounds \eqref{e.locenE} with $E=1$ and \eqref{e.epmorrey}. Therefore, it results from Theorem \ref{theo.morrey} that for all $(x,t)\in\bar Q_\frac12(0,S^*(M))$, for all $r\in(0,\frac14]$, we have the subcritical Morrey bound
\begin{equation*}
\frac{1}{r^4}\int\limits_{Q_r(\bar x,t)}|v|^3dxds\leq C_*\ep_*^\frac23,
\end{equation*}
where we recall that we have taken $\delta=1$ at the beginning of this section. We finally directly apply Theorem \ref{localboundednesscriticalStokes} to get that $v\in L^\infty(B_\frac13(0)\times(0,S^*(M)))$. This concludes the proof.

\subsection{Proof of Theorem \ref{theo.conc}}

The proof of Theorem \ref{theo.conc} is by contradiction. Let $\gamma_{univ},\, M,\,  S^*(M)$ be fixed as in Theorem \ref{theo.main}. Let $T^*\in (0,\infty)$ and $r_0(0,\infty]$ be fixed. Let 
\begin{equation}
t_*(T^*,M,r_0):=T^*-S^*(M)r_0.
\end{equation}
Assume that there exists a Leray-Hopf solution $u$ to \eqref{e.nse} satisfying the type I bound \eqref{e.typeI} such that $u$ blows-up at $(0,T^*)$ and there exists  $t_0\in(t_*(M,r_0,T^*),T^*)$ such that
\begin{equation*}
\|u(\cdot,t_0)\|_{L^3\big(|\cdot|\leq 2\sqrt{\frac{T^*-t_0}{S^*(M)}}\big)}\leq\gamma_{univ}.
\end{equation*}
We then rescale $u$ with the parameter 
\begin{equation*}
\lambda:=\sqrt{\frac{T^*-t_0}{S^*(M)}}
\end{equation*}
according to the scaling of the Navier-Stokes equations: for all $(y,s)\in \R^3\times (0,\infty)$,
\begin{equation*}
u_\lambda(y,s):=\lambda u(\lambda y,t_0+\lambda^2s).
\end{equation*}
Then, we have $\|u_\lambda(\cdot,0)\|_{L^3(|\cdot|\leq 2)}\leq\gamma_{univ}$. Furthermore, since $S^*(M)\leq 1$ and by our choice of $t_*(T^*,M,r_0)$, we can take $r=\lambda\leq r_0$ and $t=t_0$ in \eqref{e.typeI}. Therefore, we obtain that $\|u_\lambda(\cdot,0)\|_{L^2_{uloc}(\R^3)}\leq M$ . Theorem \ref{theo.main} enables to conclude that $u_\lambda$ is regular at the space-time point $(0,S^*(M))$, which is a contradiction.

\appendix

\section{Auxiliary results}
\label{app.a}

The first result is the classical existence of mild solutions for critical initial data $a\in L^3_\sigma(\R^3)$. 

\begin{proposition}[\cite{Giga86}
]\label{prop.mild3}
There exists  universal constants $\gamma,\, K_0\in (0,\infty)$ such that the following holds true. For for all $u_{0,a}\in L^3_\sigma(\R^3)$ with $\|u_{0,a}\|_{L^3}\leq\gamma$, there exists a unique smooth mild solution $a\in C([0,\infty);L^3)\cap L^\infty((0,\infty);L^3)$ such that $a(\cdot,0)=u_{0,a}$, $a\in C((0,\infty);W^{1,3}\cap BUC_\sigma)$ and
\begin{multline}\label{e.boundmild}
\sup_{t\in(0,\infty)}\big(\|a(\cdot,t)\|_{L^3}+t^{\frac{1}{8}}\|a(\cdot,t)\|_{L^{4}}+t^\frac15\|a(\cdot,t)\|_{L^5}+t^\frac12\|a(\cdot,t)\|_{L^\infty}+t^\frac12\|\nabla a(\cdot,t)\|_{L^3}\big)\\
\leq K_0\|u_{0,a}\|_{L^3}.
\end{multline}
Moreover $a\in L^5(\R^3\times (0,\infty))$ and
\begin{equation}\label{e.L5tx}
\|a\|_{L^5(\R^3\times (0,\infty))}\leq K_0\|u_{0,a}\|_{L^3}.
\end{equation}
\end{proposition}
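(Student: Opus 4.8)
The plan is to run the classical Kato fixed-point scheme for critical $L^3$ data, carrying the scale-invariant space-time norm $L^5(\R^3\times(0,\infty))$ alongside the pointwise-in-time norms so that \eqref{e.boundmild} and \eqref{e.L5tx} come out of the same contraction. First I would rewrite the problem in mild (Duhamel) form as
\[
a=\Phi(a):=e^{t\Delta}u_{0,a}+B(a,a),\qquad B(a,b)(t):=-\int_0^te^{(t-s)\Delta}\,\mathbb P\nabla\cdot(a\otimes b)(\cdot,s)\,ds,
\]
with $\mathbb P$ the Leray projector, and look for a fixed point in the Banach space $X$ with norm
\[
\|a\|_X:=\sup_{t>0}\Big(\|a(\cdot,t)\|_{L^3}+t^{\frac18}\|a(\cdot,t)\|_{L^4}+t^{\frac15}\|a(\cdot,t)\|_{L^5}+t^{\frac12}\|a(\cdot,t)\|_{L^\infty}+t^{\frac12}\|\nabla a(\cdot,t)\|_{L^3}\Big)+\|a\|_{L^5(\R^3\times(0,\infty))}.
\]
Every seminorm here is invariant (or appropriately homogeneous) under the Navier--Stokes scaling, which is what makes a \emph{global} contraction possible for small data.

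Next I would prove the linear bound $\|e^{t\Delta}u_{0,a}\|_X\le C_0\|u_{0,a}\|_{L^3}$: the pointwise-in-time terms follow immediately from the heat-semigroup smoothing estimates $\|e^{t\Delta}f\|_{L^q}\leq Ct^{-\frac32(\frac1p-\frac1q)}\|f\|_{L^p}$ and $\|\nabla e^{t\Delta}f\|_{L^q}\leq Ct^{-\frac12-\frac32(\frac1p-\frac1q)}\|f\|_{L^p}$ with $p=3$, while $\|e^{t\Delta}u_{0,a}\|_{L^5(\R^3\times(0,\infty))}$ is controlled via the heat characterization $\|e^{t\Delta}f\|_{L^5(\R^3\times(0,\infty))}\sim\|f\|_{\dot B^{-2/5}_{5,5}}$ together with the embedding $L^3\hookrightarrow\dot B^{-2/5}_{5,5}$. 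Then I would prove the bilinear bound $\|B(a,b)\|_X\le C_1\|a\|_X\|b\|_X$: using that the kernel of $e^{t\Delta}\mathbb P\nabla\cdot$ is dominated by $(|x|^2+t)^{-2}$ (as recalled in the proof of Corollary \ref{MorreyboundsOseen}), Young's and H\"older's inequalities in space with $\|a\otimes b\|_{L^{q/2}}\le\|a\|_{L^q}\|b\|_{L^q}$, and the integrability of the critical singular weight $s^{-\alpha}$, $\alpha<1$, in time, for the pointwise-in-time norms; and the \emph{parabolic} Hardy--Littlewood--Sobolev inequality --- convolution in $\R^{3+1}$ with the parabolic scaling against $(|x|^2+t)^{-2}$ maps $L^{5/2}\to L^5$ --- to obtain $\|B(a,b)\|_{L^5(\R^3\times(0,\infty))}\le C\|a\otimes b\|_{L^{5/2}(\R^3\times(0,\infty))}\le C\|a\|_{L^5(\R^3\times(0,\infty))}\|b\|_{L^5(\R^3\times(0,\infty))}$.

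With these two estimates, I would close the argument by the Banach fixed-point theorem: choosing $\gamma$ so small that $4C_0C_1\gamma<1$, the map $\Phi$ sends the closed ball $\{\|a\|_X\le 2C_0\|u_{0,a}\|_{L^3}\}$ into itself and is a contraction there, producing a unique fixed point $a\in X$ with $\|a\|_X\le 2C_0\|u_{0,a}\|_{L^3}$; taking $K_0:=2\max(C_0,1)$ then yields \eqref{e.boundmild} and \eqref{e.L5tx} at once. Strong continuity at $t=0$ in $L^3$ follows from $e^{t\Delta}u_{0,a}\to u_{0,a}$ in $L^3$ together with $\|B(a,a)(\cdot,t)\|_{L^3}\to0$ (dominated convergence, using the integrable time singularity from the bilinear estimate), and continuity for $t>0$ is obtained in the same way; the additional regularity $a\in C((0,\infty);W^{1,3}\cap BUC_\sigma)$ and the space-time smoothness for $t>0$, as well as uniqueness in the full class $C([0,\infty);L^3)$, are the classical bootstrap and uniqueness results, for which I would refer to \cite{Kato,Giga86,LR02}.

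I expect the main obstacle to be the \emph{global} space-time components of the estimates: establishing the endpoint embedding $L^3\hookrightarrow\dot B^{-2/5}_{5,5}$ (equivalently $\|e^{t\Delta}u_{0,a}\|_{L^5(\R^3\times(0,\infty))}\lesssim\|u_{0,a}\|_{L^3}$) and the scale-invariant bilinear bound $\|B(a,b)\|_{L^5(\R^3\times(0,\infty))}\lesssim\|a\|_{L^5(\R^3\times(0,\infty))}\|b\|_{L^5(\R^3\times(0,\infty))}$ via parabolic Hardy--Littlewood--Sobolev, with constants independent of time. The pointwise-in-time bounds, by contrast, are routine heat-kernel computations, and once the $L^5(\R^3\times(0,\infty))$ estimates are secured the remainder of the scheme is standard.
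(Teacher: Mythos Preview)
Your sketch is correct and follows exactly the classical Kato--Giga fixed-point scheme that underlies the cited reference; the paper itself does not supply a proof of this proposition but simply quotes it from \cite{Giga86} as an auxiliary result. There is nothing to compare against, and your outline (heat-semigroup smoothing for the linear part, parabolic Hardy--Littlewood--Sobolev/Oseen-kernel bounds for the bilinear part, contraction in the scale-invariant ball) is the standard route.
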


The second result is an estimate for local energy solutions, so-called Lemari\'e-Rieusset solutions \cite[Chapter 32 and 33]{LR02}. Before stating this result, we give the definition of such solutions. See also \cite{KS07} and \cite[Definition 3.1]{JS14}.

\begin{definition}[Local energy solutions]\label{def.lews}
A pair $(u,p)$ is called a local energy solution to \eqref{e.nse} in $\R^3\times (0,\infty)$ with the initial data $u_0\in L^2_{uloc,\sigma}(\R^3)$, $\sup_{|\bar x|\geq R}\|u_0\|_{L^2(B_1(\bar x))}\stackrel{R\rightarrow\infty}{\longrightarrow} 0$, if $(u,p)$ satisfies the following conditions:

\noindent {\rm (i)} We have $u\in L^\infty_{loc}([0,T); L^2_{uloc,\sigma} (\R^3))$ and 
$p\in L^\frac32_{loc} (\R^3\times (0,\infty))$,  and 
\begin{align*}
\sup_{\bar x\in \R^3} \int\limits_0^{T} \| \nabla u \|_{L^2 (B_1(\bar x)\cap \R^3)}^2 d t <\infty,
\end{align*}
for all finite $T\in (0, \infty)$.

\noindent {\rm (ii)} The pair $(u,p)$ is a solution to \eqref{e.nse} in the sense of distributions.

\noindent {\rm (iii)} The function $t\mapsto \langle u(\cdot,t), \varphi \rangle_{L^2(\R^3)}$ belongs to $C([0,T))$ for any compactly supported $\varphi\in L^2(\R^3)$. Moreover, for any compact set $K\subset \R^3$,
\begin{align}\label{e.cvinit}
\lim_{t\rightarrow 0} \| u(\cdot,t) -u_0 \|_{L^2(K)} =0.
\end{align} 

\noindent {\rm (iv)} The pair $(u,p)$ satisfies the local energy inequality: for any $0\leq \phi\in C^\infty_c(\R^3\times (0,\infty))$, for all $t\in (0,\infty)$,
\begin{align*}
\begin{split}
&\int\limits_{B_1(0)}|u(x,t)|^2\phi(x,t) dx+2\int\limits_{0}^t\int\limits_{\R^3}|\nabla u|^2\phi dxds\\
\leq\ &\int\limits_{0}^t\int\limits_{\R^3}|u|^2(\partial_t\phi+\Delta\phi)dxds+\int\limits_{0}^t\int\limits_{\R^3}(|u|^2+2p)u\cdot\nabla\phi dxds.
\end{split}
\end{align*}
\end{definition}

Notice that \eqref{e.cvinit} enables to transfer the mild decay of the initial data to the solution $u$. 

\begin{proposition}[{\cite[Lemma 3.1]{JS14}, \cite{LR02}}]\label{prop.lews}
There exist two universal constants $c_1,\ K_1\in (0,\infty)$ such that for all $u_0\in L^2_{uloc}(\R^3)$, for all local energy solution $u$ to \eqref{e.nse} with initial data $u_0$, we have
\begin{equation}\label{e.apriori}
\sup_{s\in(0,S_{lews})}\sup_{\bar x\in\R^3}\int\limits_{B_1(\bar x)}\frac{|u(x,s)|^2}{2}\, dx+\sup_{\bar x\in\R^3}\int\limits_0^{S_{lews}}\int\limits_{B_1(\bar x)}|\nabla u(x,s)|^2\, dx\, ds\leq K_1M^2
\end{equation}
where 
\begin{equation*}
S_{lews}:=c_1^2\min(M^{-4},1)\quad\mbox{and}\quad M:=\Bigg(\sup_{\bar x\in\R^3}\int\limits_{B_1(\bar x)}|u_0(x)|^2\, dx\Bigg)^\frac12<\infty. 
\end{equation*}
\end{proposition}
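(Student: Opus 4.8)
The plan is to derive, for each fixed base point $\bar x\in\R^3$, a localized energy estimate by testing the local energy inequality of Definition~\ref{def.lews}(iv) against $\phi_{\bar x}^2$, where $\phi_{\bar x}(x)=\phi(x-\bar x)$ and $\phi\in C^\infty_c(B_{3/2}(0))$ satisfies $0\le\phi\le 1$, $\phi\equiv 1$ on $B_1(0)$, with $|\nabla(\phi^2)|+|\Delta(\phi^2)|$ bounded by a universal constant. After taking the supremum over $\bar x$ this will produce a closed functional inequality for
$$y(t):=\sup_{\bar x\in\R^3}\Bigg(\sup_{0<s<t}\int\limits_{\R^3}|u(x,s)|^2\phi_{\bar x}^2\dd x+2\int\limits_0^t\int\limits_{\R^3}|\nabla u|^2\phi_{\bar x}^2\dd x\dd s\Bigg),$$
and a continuity argument will then give the stated bound on a time interval of length $\propto\min(M^{-4},1)$. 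Since $\phi^2\equiv 1$ on $B_1$, the quantity $y$ dominates the left-hand side of \eqref{e.apriori}, while $\supp\phi\subset B_{3/2}$ together with the strong $L^2_{loc}$ convergence $u(t)\to u_0$ of Definition~\ref{def.lews}(iii) yields $y(0)\le\sup_{\bar x}\|u_0\|_{L^2(B_{3/2}(\bar x))}^2\le C_d M^2$ for a universal $C_d$ (a ball of radius $3/2$ being covered by a bounded number of unit balls).

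With this test function the right-hand side of the local energy inequality splits into $I_1=\int_0^t\!\int|u|^2\Delta(\phi_{\bar x}^2)$, $I_2=\int_0^t\!\int|u|^2u\cdot\nabla(\phi_{\bar x}^2)$, and $I_3=2\int_0^t\!\int p\,u\cdot\nabla(\phi_{\bar x}^2)$. For $I_1$ one bounds $|I_1|\le C\int_0^t\|u(s)\|_{L^2(B_{3/2}(\bar x))}^2\dd s\le C_d\,t\,y(t)$, a term linear in $y$ with coefficient $\propto t$. For $I_2$ the point is to use the extra integrability of the dissipation: by Gagliardo--Nirenberg on $B_{3/2}(\bar x)$, $\|u\|_{L^3}^3\lesssim\|u\|_{L^2}^{3/2}\|\nabla u\|_{L^2}^{3/2}+\|u\|_{L^2}^3$, and then H\"older in time (exponents $4$ and $4/3$, exploiting $\|\nabla u\|_{L^2}^{3/2}\in L^{4/3}_t$ from the dissipation bound) gives $|I_2|\lesssim(t^{1/4}+t)\,y(t)^{3/2}$. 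The term $I_3$ is the delicate one and uses the local pressure representation recalled in Appendix~\ref{app.a}: on $B_{3/2}(\bar x)$ one writes $p=p_{\mathrm{loc}}+p_{\mathrm{far}}+c_{\bar x}(t)$, where $p_{\mathrm{loc}}$ is a Calder\'on--Zygmund transform of $u\otimes u$ cut off to $B_4(\bar x)$ and $p_{\mathrm{far}}$ is the nonlocal remainder, the spatially constant $c_{\bar x}(t)$ being harmless since $u\cdot\nabla(\phi_{\bar x}^2)$ has mean zero. One has $\|p_{\mathrm{loc}}\|_{L^{3/2}}\lesssim\|u\|_{L^3(B_4(\bar x))}^2$, so the $p_{\mathrm{loc}}$ contribution to $I_3$ is estimated exactly as $I_2$ (with $B_4$ in place of $B_{3/2}$), hence $\lesssim(t^{1/4}+t)\,y(t)^{3/2}$; while dyadic summation gives $\|\nabla p_{\mathrm{far}}(s)\|_{L^\infty(B_{3/2}(\bar x))}\lesssim\sum_{k\ge2}2^{-4k}\|u(s)\|_{L^2(B_{2^{k+1}}(\bar x))}^2\lesssim\sum_{k\ge2}2^{-k}\|u(s)\|_{L^2_{uloc}}^2\lesssim y(t)$, so that part of $I_3$ is $\lesssim t\,y(t)^{3/2}$. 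Collecting and taking $\sup_{\bar x}$ yields, for a.e.\ $t$ up to which $u$ exists as a local energy solution,
$$y(t)\le C_0 M^2+C_0\big(t^{1/4}+t\big)\,y(t)^{3/2}+C_0\,t\,y(t),$$
with $C_0$ universal.

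Finally I would run a bootstrap/continuity argument. Using the weak continuity $t\mapsto\langle u(\cdot,t),\varphi\rangle\in C([0,\infty))$ and the fact that the energy inequality holds from $s=0$ with $y(0)\le C_0M^2$, the set of $t$ with $y(t)\le 2C_0M^2$ is relatively open and closed; hence it is all of $[0,S_{lews})$ provided $S_{lews}$ is chosen so that $C_0(t^{1/4}+t)(2C_0M^2)^{3/2}+2C_0^2tM^2\le C_0M^2$ for $t<S_{lews}$, i.e.\ $C_0(2C_0)^{3/2}(t^{1/4}+t)M+2C_0t\le1$. Inspecting the two regimes --- the term $t^{1/4}M$ dominates for $M\ge1$, forcing $t\lesssim M^{-4}$, while for $M<1$ only $t\lesssim1$ is needed --- shows that $S_{lews}:=c_1^2\min(M^{-4},1)$ works for a suitable universal $c_1$. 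This gives $y(t)\le 2C_0M^2$ on $(0,S_{lews})$, hence \eqref{e.apriori} with $K_1=2C_0$. I expect the pressure term $I_3$ to be the main obstacle: one must use the precise local representation of $p$ to isolate the near-field Calder\'on--Zygmund part from the nonlocal tail, control the tail solely through the uniformly-local $L^2$ norm via dyadic summation, and keep every resulting contribution decorated with a strictly positive power of $t$, so that the functional inequality closes on the self-improving time scale $\min(M^{-4},1)$; the remaining ingredients (Gagliardo--Nirenberg, H\"older in time, the open--closed continuity argument) are routine, cf.\ \cite[Chapters 32--33]{LR02}, \cite[Lemma~3.1]{JS14} and \cite{RRS16}.
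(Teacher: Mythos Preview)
The paper does not prove Proposition~\ref{prop.lews}; it is stated in Appendix~\ref{app.a} as a known result and simply cited to \cite[Lemma~3.1]{JS14} and \cite{LR02}. Your sketch reproduces precisely the standard argument from those references: localize the energy inequality with a spatial cutoff, decompose the pressure into a near-field Calder\'on--Zygmund piece and a far-field tail (controlled by dyadic summation against $\|u\|_{L^2_{uloc}}^2$, as in the paragraph following \eqref{e.forpre}), and close a nonlinear functional inequality for $y(t)$ by a continuity argument. The estimates for $I_1,I_2,I_3$ and the resulting inequality $y(t)\le C_0M^2+C_0(t^{1/4}+t)y(t)^{3/2}+C_0ty(t)$ are correct, and the scaling analysis leading to $S_{lews}\sim\min(M^{-4},1)$ is exactly right.

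Two minor points worth tightening. First, the test functions in Definition~\ref{def.lews}(iv) are required to lie in $C^\infty_c(\R^3\times(0,\infty))$, so your time-independent $\phi_{\bar x}^2$ must be approximated by $\phi_{\bar x}^2\chi_\varepsilon(t)$ with $\chi_\varepsilon$ a time cutoff near $0$; passing $\varepsilon\to0$ and invoking the strong $L^2_{loc}$ convergence (iii) then produces the initial contribution $\int|u_0|^2\phi_{\bar x}^2$. You allude to this but do not make it explicit. Second, the ``open--closed'' phrasing of the continuity argument is slightly loose because $y$ is only known a priori to be nondecreasing and lower semicontinuous (from weak $L^2_{loc}$ continuity of $u$); the clean way is to note that on $\{y\le 2C_0M^2\}$ your inequality actually gives the \emph{strict} bound $y(t)\le \tfrac32C_0M^2$ for $t<S_{lews}$, and then use that $y$ is finite and nondecreasing on $[0,T)$ by Definition~\ref{def.lews}(i) to propagate. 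These are routine fixes and do not affect the substance of your argument.
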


Following \cite{LR02,KS07,JS13}, if $u$ is a local energy weak solution to \eqref{e.nse} in the sense of Definition \ref{def.lews} and $a$ is a mild solution to \eqref{e.nse}, then $v-a$ solves
\begin{equation*}
\partial_{t}v-\Delta v+v\cdot\nabla v+ a\cdot\nabla v+v\cdot\nabla a+ \nabla q=0,\,\,\,\,\nabla\cdot v=0,\quad x\in\R^3,\ t>0.
\end{equation*}
in the sense of distributions and we have the following \emph{global} representation formula for the pressure: 
for all $\bar x\in\R^3$, for all $(x,t)\in B_\frac32(\bar x)\times (0,T)$,
\begin{align}\label{e.forpre}
\begin{split}
q(x,t)=\ &-\frac13|v(x,t)|^2-\int\limits_{B_2(\bar x)}\nabla^2N(x-y):(v\otimes v+a\otimes v+v\otimes a)dy\\
&-\int\limits_{B_2(\bar x)}(\nabla^2N(x-y)-\nabla^2N(\bar x-y)):(v\otimes v+a\otimes v+v\otimes a)dy.
\end{split}
\end{align}
Here $N(x)=-\frac{1}{4\pi |x|}$. Notice that \eqref{e.forpre} provides a proof of estimate \eqref{pressureapriori} in the case $a=0$. With $\bar x=0$, we have for all $(x,t)\in B_\frac32(0)\times(0,S_{lews})$,
\begin{equation*}
q(x,t):=-\frac13|v(x,t)|^2+q_{loc}(x,t)+q_{nonloc}(x,t),
\end{equation*}
with  
\begin{align*}
\|q_{loc}(\cdot,t)\|_{L^\frac53_{x,t}(B_\frac32(0)\times  S_{lews})}\leq\ & C\|v^2\|_{L^\frac53_{t,x}}\leq C(1+M^2),\\
\|q_{nonloc}(\cdot,t)\|_{L^\infty(B_\frac32(0))}\leq\ &C\sum_{\xi\in\Z^3}\frac1{1+|\xi|^4}\int\limits_{B_1(\xi)}|v|^2\leq C(1+M^2),
\end{align*}
where we used Calder\'on-Zygmund estimates for the first bound. 

We conclude this appendix by a useful \emph{local} representation formula for the pressure. The following lemma is well known, see for instance Caffarelli, Kohn and Nirenberg's paper \cite{CKN82} (specifically p.782 of \cite{CKN82}).

\begin{lemma}\label{pressure localisation}
Suppose that $p\in L^{1}(B_{2}(0))$ and $V_{ij}\in L^{q}(B_{2}(0))$ for some $q>1$. Furthermore, suppose
\begin{equation}\label{pressureequation}
-\Delta p= \partial_{i}\partial_{j} V_{ij}
\end{equation}
in $B_{2}(0)$ in a distributional sense.
Then for $\varphi\in C_{0}^{\infty}(B_{1}(0))$ we have
\begin{equation}\label{localisedpressuresplitting}
\varphi p= \mathcal{R}_{i}\mathcal{R}_{j}(\varphi V_{ij})-N\ast((\partial_{i}\partial_{j}\varphi)V_{ij})-2\partial_{j}N\ast((\partial_{i}\varphi)V_{ij})-N\ast(p\Delta\varphi)-2\partial_{j}N\ast((\partial_{j}\varphi)p).
\end{equation}
\end{lemma}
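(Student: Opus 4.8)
This is the classical Caffarelli, Kohn and Nirenberg pressure localisation identity, and the plan is to prove it by computing $-\Delta(\varphi p)$ on all of $\R^3$ and then inverting the Laplacian. Set $f:=\varphi p$, extended by zero outside $B_1(0)$; since $p\in L^1(B_2(0))$ and $\varphi\in C^\infty_c(B_1(0))$, $f$ is a compactly supported $L^1$ function. Because \eqref{pressureequation} holds in $B_2(0)$, which contains $\supp\varphi$, one may compute, in the sense of distributions on $\R^3$,
\begin{equation*}
-\Delta(\varphi p)=-\varphi\,\Delta p-2\nabla\varphi\cdot\nabla p-(\Delta\varphi)\,p=\varphi\,\partial_i\partial_j V_{ij}-2\nabla\varphi\cdot\nabla p-(\Delta\varphi)\,p .
\end{equation*}
Every product here is integrable (the coefficients $\varphi,\nabla\varphi,\Delta\varphi$ are smooth and compactly supported, $p\in L^1(B_2(0))$, $V_{ij}\in L^q(B_2(0))$), so when tested against $\psi\in C^\infty_c(\R^3)$ the identity is just integration by parts.

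The next step is to bring $\varphi$ and $\nabla\varphi$ inside the derivatives so that at most first-order derivatives fall on $p$ and $V_{ij}$, and then integrate those by parts once more onto the smooth cut-off. Using the symmetry of $V_{ij}$ (as in all the applications, or else keeping the two terms separate) one gets
\begin{equation*}
\varphi\,\partial_i\partial_j V_{ij}=\partial_i\partial_j(\varphi V_{ij})+(\partial_i\partial_j\varphi)V_{ij}-2\partial_j\big((\partial_i\varphi)V_{ij}\big),
\end{equation*}
and likewise $-2\nabla\varphi\cdot\nabla p-(\Delta\varphi)p=-2\partial_j\big((\partial_j\varphi)p\big)+(\Delta\varphi)p$. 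Collecting the terms,
\begin{equation*}
-\Delta(\varphi p)=\partial_i\partial_j(\varphi V_{ij})+(\partial_i\partial_j\varphi)V_{ij}-2\partial_j\big((\partial_i\varphi)V_{ij}\big)-2\partial_j\big((\partial_j\varphi)p\big)+(\Delta\varphi)p=:G,
\end{equation*}
which is a compactly supported distribution: a finite sum of (at most second) derivatives of compactly supported $L^1$ or $L^q$ functions.

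Now I would invert. With $N(x)=-\tfrac1{4\pi|x|}$ one has $-\Delta(-N\ast g)=g$ for any compactly supported distribution $g$, and $\mathcal R_i\mathcal R_j g=-\partial_i\partial_j(N\ast g)$. Applying $-N\ast(\cdot)$ to $G$ and letting the derivatives act on $N$ converts the five terms of $G$ into the five convolution terms appearing in \eqref{localisedpressuresplitting}: $\partial_i\partial_j(\varphi V_{ij})$ produces $\mathcal R_i\mathcal R_j(\varphi V_{ij})$, the zeroth-order terms produce $-N\ast((\partial_i\partial_j\varphi)V_{ij})$ and $-N\ast(p\Delta\varphi)$, and the two first-order terms produce the two $\partial_j N\ast$ contributions (the precise signs being a routine bookkeeping). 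It only remains to match this with $\varphi p$ itself. Both $\varphi p$ and $-N\ast G$ solve $-\Delta(\cdot)=G$ on $\R^3$, so their difference $D$ is an entire harmonic tempered distribution, hence a polynomial.

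The one genuinely substantive point — and the only place the integrability hypotheses $p\in L^1$, $V_{ij}\in L^q$ with $q>1$ are really used — is that $D\equiv 0$: the function $\varphi p$ has compact support, $\mathcal R_i\mathcal R_j(\varphi V_{ij})\in L^q(\R^3)$ by Calder\'on-Zygmund (this needs $q>1$), and each of the remaining terms is a Riesz or Newtonian potential of a compactly supported $L^1$ (resp. $L^q$) function, hence decays at infinity and lies in some $L^{r,\infty}(\R^3)$ with $r<\infty$. A polynomial belonging to a sum of such spaces must vanish, so $\varphi p=-N\ast G$, which is precisely \eqref{localisedpressuresplitting}. The proof is essentially that of \cite[p.\,782]{CKN82}; the only mild obstacles are keeping the distributional Leibniz and integration-by-parts bookkeeping straight and justifying the vanishing of the harmonic polynomial at this low integrability, both dealt with as above.
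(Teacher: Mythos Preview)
The paper does not supply a proof of this lemma; it simply states the identity and refers the reader to \cite[p.\,782]{CKN82}. Your argument is precisely the standard CKN derivation: compute $-\Delta(\varphi p)$ via Leibniz, rearrange so that only compactly supported $L^1$/$L^q$ functions appear (with at most two derivatives on the $\varphi V_{ij}$ term), convolve with $N$, and kill the harmonic remainder using Liouville together with the integrability of each piece. This matches the paper's intended reference, and the two substantive points you flag --- the distributional bookkeeping and the use of $q>1$ for Calder\'on--Zygmund to force the harmonic polynomial to vanish --- are exactly the places where care is needed. The proof is correct.

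One very small remark: the lemma as stated does not assume $V_{ij}$ symmetric, and the factor $2$ in front of $\partial_jN\ast((\partial_i\varphi)V_{ij})$ in \eqref{localisedpressuresplitting} already presumes this. Your parenthetical observation that only the symmetric part of $V_{ij}$ enters $\partial_i\partial_j V_{ij}$ (so one may symmetrize without loss) is the right way to dispose of this, and is implicit in the paper's applications where $V_{ij}=v_iv_j+a_iv_j+v_ia_j$ is symmetric anyway.
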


\section{The case of $L^{3,\infty}$ initial data}
\label{app.b}
\subsection{Preliminary material}
Given a measurable subset $\Omega\subseteq\mathbb{R}^{d}$, let us define the Lorentz spaces. 
For a measurable function $f:\Omega\rightarrow\mathbb{R}$ define:
\begin{equation}\label{defdistchapter2}
d_{f,\Omega}(\alpha):=\mu(\{x\in \Omega : |f(x)|>\alpha\}),
\end{equation}
where $\mu$ denotes the Lebesgue measure.
 The Lorentz space $L^{p,q}(\Omega)$, with $p\in [1,\infty[$, $q\in [1,\infty]$, is the set of all measurable functions $g$ on $\Omega$ such that the quasinorm $\|g\|_{L^{p,q}(\Omega)}$ is finite. Here:

\begin{equation}\label{Lorentznormchapter2}
\|g\|_{L^{p,q}(\Omega)}:= \Big(p\int\limits_{0}^{\infty}\alpha^{q}d_{g,\Omega}(\alpha)^{\frac{q}{p}}\frac{d\alpha}{\alpha}\Big)^{\frac{1}{q}},
\end{equation}
\begin{equation}\label{Lorentznorminftychapter2}
\|g\|_{L^{p,\infty}(\Omega)}:= \sup_{\alpha>0}\alpha d_{g,\Omega}(\alpha)^{\frac{1}{p}}.
\end{equation}\\
It is known there exists a norm, which is equivalent to the quasinorm defined above, for which $L^{p,q}(\Omega)$ is a Banach space. 
For $p\in [1,\infty)$ and $1\leq q_{1}< q_{2}\leq \infty$, we have the following continuous embeddings 
\begin{equation}\label{Lorentzcontinuousembeddingchapter2}
L^{p,q_1}(\Omega) \hookrightarrow  L^{p,q_2}(\Omega)
\end{equation}
and the inclusion is known to be strict.

Let $X$ be a Banach space with norm $\|\cdot\|_{X}$, $ a<b$, $p\in [1,\infty)$ and  $q\in [1,\infty]$. 
 Then $L^{p,q}(a,b;X)$   will denote the space of strongly measurable $X$-valued functions $f(t)$ on $(a,b)$ such that 

\begin{equation}\label{Lorentznormbochnerchapter2}
\|f\|_{L^{p,q}(a,b; X)}:= \big\|\|f(t)\|_{X}\big\|_{L^{p,q}(a,b)}<\infty.
\end{equation}
In particular, if $1\leq q_{1}< q_{2}\leq \infty$, we have the following continuous embeddings 
\begin{equation}\label{Bochnerlorentzcontinuousembeddingchapter2}
L^{p,q_1}(a,b; X) \hookrightarrow  L^{p,q_2}(a,b; X)
\end{equation}
and the inclusion is known to be strict.

Let us recall a known proposition known as `O'Neil's convolution inequality' (Theorem 2.6 of  O'Neil's paper \cite{O'Neil}).
\begin{proposition}\label{O'Neilchapter2}
Suppose $1< p_{1}, p_{2}, r<\infty$ and $1\leq q_{1}, q_{2}, s\leq\infty $ are such that
\begin{equation}\label{O'Neilindices1chapter2}
\frac{1}{r}+1=\frac{1}{p_1}+\frac{1}{p_{2}}
\end{equation}
and
\begin{equation}\label{O'Neilindices2chapter2}
\frac{1}{q_1}+\frac{1}{q_{2}}\geq \frac{1}{s}.
\end{equation}
Suppose that
\begin{equation}\label{fghypothesischapter2}
f\in L^{p_1,q_1}(\mathbb{R}^{d})\,\,\textrm{and}\,\,g\in  L^{p_2,q_2}(\mathbb{R}^{d}).
\end{equation}
Then
\begin{equation}\label{fstargconclusion1chapter2}
f\ast g \in L^{r,s}(\mathbb{R}^d)\,\,\rm{with} 
\end{equation}
\begin{equation}\label{fstargconclusion2chapter2}
\|f\ast g \|_{L^{r,s}(\mathbb{R}^d)}\leq 3r \|f\|_{L^{p_1,q_1}(\mathbb{R}^d)} \|g\|_{L^{p_2,q_2}(\mathbb{R}^d)}. 
\end{equation}
\end{proposition}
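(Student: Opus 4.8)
The plan is to follow O'Neil's original argument, which reduces the estimate to a pointwise bound for the decreasing rearrangement of a convolution, and then runs Hardy's and H\"older's inequalities in a one-dimensional weighted space. Recall that for measurable $h$ on $\mathbb{R}^{d}$ one writes $h^{*}$ for the decreasing rearrangement on $(0,\infty)$, $h^{**}(t):=\tfrac1t\int_{0}^{t}h^{*}(s)\,ds$, and that for $p\in(1,\infty)$, $q\in[1,\infty]$ the quasinorm $\|h\|_{L^{p,q}(\mathbb{R}^{d})}$ is equivalent, with constants depending only on $p,q$, to each of $\|t^{1/p}h^{*}(t)\|_{L^{q}((0,\infty);\,dt/t)}$ and $\|t^{1/p}h^{**}(t)\|_{L^{q}((0,\infty);\,dt/t)}$ -- the equivalence of the latter two being Hardy's inequality, which is where $p>1$ enters. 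I will use this equivalence freely, together with the Hardy--Littlewood inequality $\int_{\mathbb{R}^{d}}|uv|\le\int_{0}^{\infty}u^{*}(s)v^{*}(s)\,ds$ and the Lorentz embedding \eqref{Lorentzcontinuousembeddingchapter2}.

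The core step is the rearrangement estimate
\begin{equation*}
(f*g)^{**}(t)\ \le\ C_{0}\,t\,f^{**}(t)\,g^{**}(t)+\int_{t}^{\infty}f^{*}(s)g^{*}(s)\,ds,\qquad t>0,
\end{equation*}
for an absolute constant $C_{0}$. To prove it one assumes $f,g\ge0$, fixes $t>0$, and decomposes $f=f_{1}+f_{2}$ with $f_{1}:=\min\{f,f^{*}(t)\}$ and $f_{2}:=(f-f^{*}(t))_{+}$, and likewise $g=g_{1}+g_{2}$; then $\|f_{1}\|_{L^{\infty}}\le f^{*}(t)$, $\|f_{2}\|_{L^{1}}\le\int_{0}^{t}f^{*}=tf^{**}(t)$, and $(f_{1})^{*}(s)=f^{*}(\max\{s,t\})$, with the same bounds for $g$. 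Writing $f*g=\sum_{i,j\in\{1,2\}}f_{i}*g_{j}$ and using subadditivity of $h\mapsto\int_{0}^{t}h^{*}$: the three terms with $(i,j)\ne(1,1)$ are estimated by Young's inequality ($\|u*v\|_{L^{\infty}}\le\|u\|_{L^{1}}\|v\|_{L^{\infty}}$ and $\|u*v\|_{L^{1}}\le\|u\|_{L^{1}}\|v\|_{L^{1}}$), each contributing at most $tf^{**}(t)g^{**}(t)$ to $(f*g)^{**}(t)$; the term $f_{1}*g_{1}$ is handled by the pointwise Hardy--Littlewood bound $\|f_{1}*g_{1}\|_{L^{\infty}}\le\int_{0}^{\infty}(f_{1})^{*}(s)(g_{1})^{*}(s)\,ds=tf^{*}(t)g^{*}(t)+\int_{t}^{\infty}f^{*}(s)g^{*}(s)\,ds$. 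Summing and using $f^{*}(t)\le f^{**}(t)$ gives the display; a more careful splitting (as in \cite{O'Neil}) reduces $C_{0}$ to the value needed below.

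With this in hand, using \eqref{Lorentzcontinuousembeddingchapter2} to enlarge $q_{1},q_{2}$ we may assume $\tfrac1{q_{1}}+\tfrac1{q_{2}}=\tfrac1{s}$ (possible by \eqref{O'Neilindices2chapter2} and $s\ge1$). Multiply the rearrangement estimate by $t^{1/r}$ and take the $L^{s}((0,\infty);\,dt/t)$ norm. For the first term, $\tfrac1r+1=\tfrac1{p_{1}}+\tfrac1{p_{2}}$ by \eqref{O'Neilindices1chapter2}, so it equals $\big\|\big(t^{1/p_{1}}f^{**}(t)\big)\big(t^{1/p_{2}}g^{**}(t)\big)\big\|_{L^{s}(dt/t)}$, and H\"older's inequality together with the $f^{**}$-characterization of $L^{p_{1},q_{1}}$ and $L^{p_{2},q_{2}}$ bounds it by $C\,\|f\|_{L^{p_{1},q_{1}}}\|g\|_{L^{p_{2},q_{2}}}$. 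For the second term, Hardy's inequality for $\phi\mapsto t^{1/r}\int_{t}^{\infty}\phi(s)\,ds$ on $L^{s}(dt/t)$ -- valid because $1/r>0$, i.e. $r<\infty$, with operator norm comparable to $r$ -- reduces it to $\big\|s^{1/r+1}f^{*}(s)g^{*}(s)\big\|_{L^{s}(ds/s)}=\big\|\big(s^{1/p_{1}}f^{*}\big)\big(s^{1/p_{2}}g^{*}\big)\big\|_{L^{s}(ds/s)}$, and H\"older again bounds this by $C\,\|f\|_{L^{p_{1},q_{1}}}\|g\|_{L^{p_{2},q_{2}}}$. This proves $f*g\in L^{r,s}(\mathbb{R}^{d})$ with the asserted bound; keeping track of the constants -- the factor $r$ coming from the Hardy inequality with weight exponent $1/r$ -- produces the numerical value $3r$.

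The main obstacle is the rearrangement estimate, and in particular obtaining the sharp constant in front of $t\,f^{**}(t)\,g^{**}(t)$ (which is what ultimately yields $3r$ rather than a larger multiple of $r$): this requires a careful truncation and attention to the degenerate cases in which $f^{*}$ or $g^{*}$ has flat pieces or jumps, where one must use $\|f_{2}\|_{L^{1}}=\int_{\{f>f^{*}(t)\}}(f-f^{*}(t))$ and $|\{f>f^{*}(t)\}|\le t$ rather than equalities. Everything else is routine manipulation of Hardy's and H\"older's inequalities, the only genuine hypotheses being $p_{1},p_{2}>1$ (needed for $f^{**}\sim f^{*}$) and $r<\infty$ (needed for the tail Hardy inequality), both of which are assumed.
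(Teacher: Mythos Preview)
The paper does not prove this proposition: it is stated as a recalled result with a citation to O'Neil's original paper (Theorem~2.6 of \cite{O'Neil}), so there is no proof in the paper to compare against. Your sketch is precisely O'Neil's argument---the pointwise rearrangement bound $(f*g)^{**}(t)\le C_0\,t\,f^{**}(t)g^{**}(t)+\int_t^\infty f^*g^*$ obtained by truncation at level $f^*(t)$, $g^*(t)$, followed by H\"older in $L^s(dt/t)$ for the first term and Hardy's inequality on the tail integral for the second---and is correct as an outline; the constant $3r$ indeed arises from the Hardy operator norm $\sim r$ together with the constant in the rearrangement lemma.
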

We will use an inequality that we will refer to as `Hunt's inequality'. The statement below and proof can be found in Hunt's paper \cite{hunt} (Theorem 4.5, p.271 of \cite{hunt}).
\begin{proposition}\label{hunt}
Suppose that $0<p,q,r\leq\infty$ and $0<s_1,s_2\leq\infty$. Furthermore, suppose that $p$, $q$, $r$, $s_1$ and $s_2$ satisfy the following relations: $$\frac{1}{p}+\frac{1}{q}=\frac{1}{r}$$ and $$\frac{1}{s_1}+\frac{1}{s_2}=\frac{1}{s}.$$ 
Then the assumption that $f\in L^{p,s_1}(\Omega)$ and $g\in L^{q,s_2}(\Omega)$ implies  that $fg \in L^{r,s}(\Omega)$, with the estimate 
\begin{equation}\label{Holderverygeneral}
\|fg\|_{L^{r,s}(\Omega)}\leq C(p,q,s_1,s_2)\|f\|_{L^{p,s_1}(\Omega)}\|g\|_{L^{q,s_2}(\Omega)}.
\end{equation}
\end{proposition}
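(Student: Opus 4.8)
The plan is to reduce Hunt's inequality to a weighted one-dimensional H\"older inequality by passing to decreasing rearrangements. For a measurable $h:\Omega\to\R$ I would write $h^*(t):=\inf\{\alpha>0:d_{h,\Omega}(\alpha)\le t\}$ for its nonincreasing rearrangement on $(0,\infty)$. Writing $h^*(t)^q=q\int_0^\infty\alpha^{q-1}\chi_{\{\alpha<h^*(t)\}}\,d\alpha$ and using that, for a.e.\ pair $(t,\alpha)$, the condition $\alpha<h^*(t)$ is equivalent to $t<d_{h,\Omega}(\alpha)$, a Fubini/layer-cake computation applied to the definitions \eqref{Lorentznormchapter2}--\eqref{Lorentznorminftychapter2} yields the identity
\begin{equation*}
\|h\|_{L^{p,q}(\Omega)}=\Big\|t^{\frac1p}h^*(t)\Big\|_{L^q\big((0,\infty),\,dt/t\big)},
\end{equation*}
valid for $p\in(0,\infty)$ and $q\in(0,\infty]$, the right-hand side being the essential supremum over $t$ when $q=\infty$. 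I would then use this as the working form of every Lorentz quasinorm appearing in the argument.

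The one genuinely geometric ingredient is the submultiplicativity of the rearrangement of a product: for every $t>0$,
\begin{equation*}
(fg)^*(t)\le f^*(t/2)\,g^*(t/2).
\end{equation*}
Indeed, if $|f(x)|\le f^*(t/2)$ and $|g(x)|\le g^*(t/2)$ then $|f(x)g(x)|\le f^*(t/2)g^*(t/2)$; hence $\{|fg|>f^*(t/2)g^*(t/2)\}$ is contained in $\{|f|>f^*(t/2)\}\cup\{|g|>g^*(t/2)\}$, a set of measure at most $t/2+t/2=t$, so $d_{fg,\Omega}\big(f^*(t/2)g^*(t/2)\big)\le t$, which is exactly the stated bound.

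With these two facts the estimate is mechanical. Using the rearrangement form of $\|\cdot\|_{L^{r,s}}$, the pointwise bound, the change of variable $t\mapsto 2t$, and the hypothesis $\tfrac1r=\tfrac1p+\tfrac1q$, one obtains
\begin{align*}
\|fg\|_{L^{r,s}(\Omega)}
&=\Big\|t^{\frac1r}(fg)^*(t)\Big\|_{L^s(dt/t)}
\le\Big\|t^{\frac1r}f^*(t/2)\,g^*(t/2)\Big\|_{L^s(dt/t)}\\
&=2^{\frac1r}\Big\|t^{\frac1r}f^*(t)\,g^*(t)\Big\|_{L^s(dt/t)}
=2^{\frac1r}\Big\|\big(t^{\frac1p}f^*(t)\big)\big(t^{\frac1q}g^*(t)\big)\Big\|_{L^s(dt/t)}.
\end{align*}
Since $\tfrac1{s_1}+\tfrac1{s_2}=\tfrac1s$, H\"older's inequality on the measure space $\big((0,\infty),\,dt/t\big)$ with exponents $s_1$ and $s_2$ then gives
\begin{equation*}
\|fg\|_{L^{r,s}(\Omega)}\le 2^{\frac1r}\Big\|t^{\frac1p}f^*(t)\Big\|_{L^{s_1}(dt/t)}\Big\|t^{\frac1q}g^*(t)\Big\|_{L^{s_2}(dt/t)}=2^{\frac1r}\,\|f\|_{L^{p,s_1}(\Omega)}\,\|g\|_{L^{q,s_2}(\Omega)},
\end{equation*}
which is the desired inequality, with $C(p,q,s_1,s_2)=2^{1/r}$.

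I do not expect a real obstacle here: the only points needing care are the degenerate endpoint conventions hidden in the generality of the statement. When one or more of $s,s_1,s_2$ equals $\infty$, the H\"older step degenerates to the trivial $\sup\times L^s$ or $\sup\times\sup$ bound and holds with the same constant; when $p=\infty$ or $q=\infty$ one sets $\tfrac1r$ equal to $\tfrac1q$ or $\tfrac1p$ accordingly, and the only nontrivial such case on a space of infinite measure is $L^{\infty,\infty}=L^\infty$, where $\|fg\|_{L^\infty}\le\|f\|_{L^\infty}\|g\|_{L^\infty}$ is immediate (on a finite measure space the computation above runs verbatim since $h^*$ is then supported in a bounded interval). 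The closest thing to a delicate step is the layer-cake identity in the first paragraph, and the remaining chore is tracking the constant through the exact normalisation relating \eqref{Lorentznormchapter2} to its rearrangement form, which depends only on $p$ and $q$.
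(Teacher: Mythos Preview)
Your argument is correct and is precisely the standard rearrangement proof of Hunt's inequality: pass to decreasing rearrangements to turn the Lorentz quasinorm into a weighted $L^q$ norm on $(0,\infty)$, use the elementary submultiplicativity $(fg)^*(t)\le f^*(t/2)g^*(t/2)$, and conclude with H\"older on $((0,\infty),dt/t)$. The layer-cake computation you sketch does yield the exact identity $\|h\|_{L^{p,q}}=\|t^{1/p}h^*(t)\|_{L^q(dt/t)}$ under the paper's normalisation \eqref{Lorentznormchapter2}, so no extraneous constant appears beyond the $2^{1/r}$ coming from the dilation.

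As for comparison: the paper does not actually prove this proposition. It simply states the result and cites Hunt's original paper \cite{hunt} (Theorem~4.5, p.~271) for the proof. So there is no ``paper's own proof'' to compare against here; you have supplied what the paper omits. Your approach is in fact the one Hunt uses.
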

As a result of the above Propositions, we have the following estimates with $B_{R}(0)\subset \mathbb{R}^3$ (which we will frequently use):
\begin{align}\label{criticalLorentzestimate}
\begin{split}
&\int\limits_{Q_{R}(0)} |\nabla f| |f| |g| dxdt\\
\leq\ &\|g\|_{L^{\infty}_{t}L^{3,\infty}_{x}(Q_{R}(0))}\Bigg( \|\nabla f\|_{L^{2}(Q_{R}(0))}^2+ \frac{1}{R}\int\limits_{-R^2}^{0}\|\nabla f\|_{L^{2}(B_{R}(0))}\|f\|_{L^{2}(B_{R}(0))}dt\Bigg),
\end{split}
\end{align}
\begin{equation}\label{lowerordertermestimate}
\int\limits_{Q_{R}(0)}  |f|^2 |g| dxdt\leq  R^{\frac{2}{3}}\|g\|_{L^{5,\infty}_{t}L^{5}_{x}(Q_{R}(0))} \| f\|_{L^{3}(Q_{R}(0))}^2.
\end{equation}
The first estimate is stated and proven in \cite{dubois}, for example.
Now, we state known results for the Navier-Stokes equations with initial data in $L^{3,\infty}(\mathbb{R}^3).$ We refer the reader to \cite{meyer} and \cite{Plan96}.
\begin{proposition}
\label{prop.mildweak3}
There exists universal constants $\gamma,\, K_0'\in (0,\infty)$ such that the following holds true. For all $u_{0,a}\in L^{3,\infty}_\sigma(\R^3)$, $\|u_{0,a}\|_{L^{3,\infty}}\leq\gamma$, there exists a smooth mild solution $a\in   C_{w^{*}}([0,\infty);L^{3,\infty})\cap L^\infty((0,\infty);L^{3,\infty})$ such that $a(\cdot,0)=u_{0,a}$ and
\begin{equation}\label{e.boundmildweakL3}
\sup_{t\in(0,\infty)}\big(\|a(\cdot,t)\|_{L^{3,\infty}}+t^\frac18\|a(\cdot,t)\|_{L^4}+t^\frac15\|a(\cdot,t)\|_{L^5}+t^\frac12\|a(\cdot,t)\|_{L^\infty})\leq K_0'\|u_0\|_{L^{3,\infty}}.
\end{equation}
The mild solution is unique in the class of solutions with small enough $L^\infty(0,\infty; L^{3,\infty})$ norm.
\end{proposition}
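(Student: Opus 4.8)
The plan is to construct $a$ by Picard iteration for the mild (Duhamel) formulation $a = e^{t\Delta}u_{0,a} - B(a,a)$, where $B(u,v)(t):=\int_0^t e^{(t-s)\Delta}\mathbb{P}\nabla\cdot(u\otimes v)(s)\,ds$ and $\mathbb{P}$ is the Leray projector, carried out in the path space $\mathcal{X}$ of vector fields $f$ with $\|f\|_{\mathcal{X}}:=\sup_{t>0}\big(\|f(t)\|_{L^{3,\infty}(\R^3)}+t^{\frac18}\|f(t)\|_{L^4}+t^{\frac15}\|f(t)\|_{L^5}+t^{\frac12}\|f(t)\|_{L^\infty}\big)<\infty$. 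One recalls that $L^{3,\infty}=(L^{3/2,1})^*$ and that $\mathbb{P}$, being a Calder\'on--Zygmund operator, is bounded on $L^{p,q}(\R^3)$ for $1<p<\infty$, $1\le q\le\infty$. For the linear term, by scaling the heat kernel $g_t$ obeys $\|g_t\|_{L^{r,1}(\R^3)}\simeq t^{-\frac32(1-\frac1r)}$, so O'Neil's convolution inequality (Proposition~\ref{O'Neilchapter2}) with $\frac1r+1=\frac1p+\frac13$ gives $\|e^{t\Delta}u_{0,a}\|_{L^p}\lesssim t^{-\frac32(\frac13-\frac1p)}\|u_{0,a}\|_{L^{3,\infty}}$ for $p\in\{4,5,\infty\}$, while Young's inequality for Lorentz spaces (convolution against the $L^1$ function $g_t$) gives $\|e^{t\Delta}u_{0,a}\|_{L^{3,\infty}}\le\|u_{0,a}\|_{L^{3,\infty}}$; altogether $\|e^{\cdot\,\Delta}u_{0,a}\|_{\mathcal{X}}\le C_1\|u_{0,a}\|_{L^{3,\infty}}$.

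The second step is the bilinear estimate $\|B(u,v)\|_{\mathcal{X}}\le C_2\|u\|_{\mathcal{X}}\|v\|_{\mathcal{X}}$. The kernel $K_\tau$ of $e^{\tau\Delta}\mathbb{P}\nabla\cdot$ is bounded by $(|x|^2+\tau)^{-2}$ (as in the proof of Corollary~\ref{MorreyboundsOseen}), equivalently $K_\tau(x)=\tau^{-2}\widetilde G(x/\sqrt\tau)$ with $|\widetilde G(y)|\lesssim(1+|y|)^{-4}$, so $\|K_\tau\|_{L^{r,1}}\simeq\tau^{-\frac12-\frac32(1-\frac1r)}$ for $r\in(\tfrac34,\infty)$. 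Combining this with O'Neil's inequality, with H\"older's inequality for Lorentz spaces (Proposition~\ref{hunt}) to bound $\|(u\otimes v)(s)\|$ using exponents chosen from $\{4,5,\infty\}$ together with the $\mathcal{X}$-weights (e.g.\ $\|(u\otimes v)(s)\|_{L^2}\le\|u(s)\|_{L^4}\|v(s)\|_{L^4}$ for the $L^{3,\infty}$ and $L^4$ components, $\|(u\otimes v)(s)\|_{L^4}\le\|u(s)\|_{L^4}\|v(s)\|_{L^\infty}$ for the $L^\infty$ component), each component of $\|B(u,v)(t)\|$ is dominated by a time integral of the form $\int_0^t(t-s)^{-\alpha}s^{-\beta}\,ds$ with $\alpha,\beta<1$, which equals $t^{1-\alpha-\beta}$ times a convergent Euler Beta integral; in every case $1-\alpha-\beta$ is exactly the exponent for which the matching $\mathcal{X}$-weight yields a bound uniform in $t$. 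This gives the claimed estimate.

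With these two estimates the standard Picard/Banach fixed-point lemma (exactly as in the $L^3$ case recalled in Proposition~\ref{prop.mild3}, cf.\ \cite{Giga86,Kato}) provides, whenever $4C_1C_2\gamma\le1$, a unique $a\in\mathcal{X}$ with $\|a\|_{\mathcal{X}}\le2C_1\|u_{0,a}\|_{L^{3,\infty}}$ solving the mild equation, which is precisely \eqref{e.boundmildweakL3} with $K_0'=2C_1$; interior smoothness of $a$ on $\R^3\times(0,\infty)$ then follows from a routine parabolic bootstrap on the Duhamel formula. For the initial condition and the weak-$*$ continuity, $t\mapsto e^{t\Delta}u_{0,a}$ is weak-$*$ continuous on $[0,\infty)$ because the heat semigroup is strongly continuous on the separable predual $L^{3/2,1}$; $t\mapsto B(a,a)(t)$ is weak-$*$ continuous on $(0,\infty)$ by dominated convergence in the Duhamel integral, and for $\psi\in C^\infty_c(\R^3)$ one has $|\langle B(a,a)(t),\psi\rangle|\le\int_0^t\|(a\otimes a)(s)\|_{L^{3/2,\infty}}\|\nabla e^{(t-s)\Delta}\mathbb{P}\psi\|_{L^{3,1}}\,ds\lesssim t\,\|a\|_{\mathcal{X}}^2\,\|\nabla\mathbb{P}\psi\|_{L^{3,1}}\to0$ as $t\to0$, so $B(a,a)(t)\to0$ in $\mathcal{S}'(\R^3)$; combining this distributional continuity at $t=0$ with the uniform bound $\sup_{t>0}\|a(t)\|_{L^{3,\infty}}<\infty$ and the density of $C^\infty_c(\R^3)$ in $L^{3/2,1}$ upgrades it to $a(t)\rightharpoonup^* u_{0,a}$. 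Finally, the uniqueness in the class of solutions with sufficiently small $L^\infty(0,\infty;L^{3,\infty})$ norm is Meyer's theorem \cite{meyer}: one first bootstraps the mild equation to show that any such solution automatically belongs to $\mathcal{X}$ (in particular acquires the $t^{\frac12}\|\cdot\|_{L^\infty}$ control needed to make the relevant bilinear time integral converge), and then subtracts two solutions and closes using the bilinear estimate on the difference.

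The only genuinely non-routine point is the weak-$*$ continuity at $t=0$: in contrast with the $L^3$ case, $B(a,a)(t)$ need not converge to $0$ in the $L^{3,\infty}$ norm --- this failure of strong continuity of the $L^{3,\infty}$ norm is also what forces the smallness assumption in the uniqueness statement --- so one cannot argue by norm convergence and must pass through distributional continuity plus the density of test functions in the predual, as above. The remaining ingredients (the Lorentz bookkeeping in the bilinear estimate, the fixed point, the a priori bounds and the interior smoothness) are standard, and the complete construction together with the uniqueness statement is carried out in \cite{meyer,Plan96}.
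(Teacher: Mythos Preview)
The paper does not give its own proof of this proposition: it is stated as a known result with the sentence ``We refer the reader to \cite{meyer} and \cite{Plan96}.'' Your sketch is a correct outline of the standard Picard/fixed-point construction in the Kato path space that one finds in those references, and your remark that weak-$*$ (rather than strong) continuity at $t=0$ is the subtle point specific to $L^{3,\infty}$ is accurate.
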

\subsection{$L^{3,\infty}$ initial data: Section \ref{sec.morrey}}
We briefly describe the changes the required for Section \ref{sec.morrey}. With the above Proposition in mind concerning mild solutions, in Section \ref{sec.morrey} we can no longer assume $a$ is in $L^{5}_{x,t}$. Instead we assume
\begin{equation}\label{step2weakL3smallness}
\|a\|_{L^{\infty}(-1,0; L^{3,\infty}(B_{1}(0)))}+\|a\|_{L^{5,\infty}(-1,0; L^{5}(B_{1}(0)))}\leq \varepsilon_{*}. 
\end{equation}
The first adjustment regards the estimate of the pressure (Lemma \ref{lem.pressure}). In particular, Hunt's inequality can be used to show that the second and last term in \eqref{e.estpress} can be replaced by
$$C_{2}r^{\frac{1-\delta}{2}}\Bigg(\int\limits_{Q_{2r}(0,0)}|v|^3 dxds\Bigg)^{\frac{1}{2}}\|a\|_{L^{5,\infty}_{t}L^{5}_{x}(Q_{1}(0,0))}^{\frac{3}{2}} $$
and
$$C_{2}r^{\frac{44-5\delta}{10}}\rho^{\frac{-39}{10}}\Bigg(\int\limits_{Q_{2r}(0,0)}|v|^3 dxds\Bigg)^{\frac{1}{2}}\|a\|_{L^{5,\infty}_{t}L^{5}_{x}(Q_{1}(0,0))}^{\frac{3}{2}}.  $$
Now we proceed to the adjustments needed for the proof of Theorem \ref{theo.morrey}.
In Step 2 and Step 3 the only adjustment is to make extensive use of \eqref{criticalLorentzestimate}-\eqref{lowerordertermestimate}.
In Step 4 we take the adjustment of Lemma \ref{lem.pressure} into account. Moreover, when estimating the pressure we have to use Hunt's inequality to estimate $J_{4}$. In particular, this gives
$$J_{4}\leq C\left(\sum_{k=2}^{n-1}r_{k+1}^{-4+\frac9{10}}\big(\ep_*^\frac23r_{k}^{3-\frac23\delta}\big)^\frac12\right)^\frac32\|a\|_{L^{5,\infty}_{t}L^{5}_{x}(Q_{1}(0,0))}^\frac32r_{n+1}^\frac75 .$$
\subsection{$L^{3,\infty}$ initial data: Section \ref{sec.boundedness}}
As in Section \ref{sec.morrey} we can no longer assume $a$ is in $L^{5}_{x,t}$. Instead we assume
\begin{equation}\label{step3weakL3smallness}
\sup_{0<s<\infty} s^{\frac{1}{5}}\|a(\cdot,s)\|_{L_{5}(\mathbb{R}^3)}\leq \ep_{**},
\end{equation}
where $\ep_{**}>0$ is some small universal constant. The only difference in Section \ref{sec.boundedness} regards \eqref{criticalestfinite}. In particular, we can use Young's inequality in space, followed by O'Neil's convolution inequality in time and finally Hunt's inequality in time  to see that the following holds: for $1<q<\infty$,
\begin{align*}
\|L(\nabla\cdot(a\otimes b))\|_{L^{q}_{x,t}(\mathbb{R}^3\times (0,T))}\leq\ & C(q)\big\|\|a(\cdot,s)\|_{L^{5}_{x}}\|b(\cdot, s)\|_{L^{q}_{x}}\big\|_{L^{\frac{5q}{5+q},q}(0,T)}\\
\leq\ & C'(q)\sup_{0<s<\infty} s^{\frac{1}{5}}\|a(\cdot,s)\|_{L^{5}(\mathbb{R}^3)} \|b\|_{L^{q}_{x,t}(\mathbb{R}^3\times (0,T))}.
\end{align*}

\subsection{$L^{3,\infty}$ initial data: Section \ref{sec.proof}}
First \eqref{e.bogo} must be adjusted using Hunt's inequality as follows:
\begin{align}\label{e.bogo.weakL3}
\begin{split}
\|\tilde u_{0,a}\|_{L^{3,\infty}(B_{2}(0)\setminus B_{1}(0))}\leq\ & C
\|\nabla \tilde u_{0,a} \|_{L^2(B_2(0)\setminus B_1(0))}\\
\leq\ & C(\chi)
\|u_0\|_{L^2(B_{2}(0))}\leq C(\chi)\|u_{0}\|_{L^{3,\infty}(B_{2}(0))}.
\end{split}
\end{align}
Then in \eqref{e.alhs}, we must instead make use of \eqref{criticalLorentzestimate}-\eqref{lowerordertermestimate}.
\section{The case of Besov initial data}
\label{app.c}
\subsection{Preliminaries}
Let $d, m \in \N\setminus\{0\}$. We begin by recalling the definition of the \emph{homogeneous Besov spaces} $\dot B^s_{p,q}(\R^d;\R^m)$.  There exists a non-negative radial function $\varphi \in C^\infty(\R^d)$ supported on the annulus $\{ \xi \in \R^d : 3/4 \leq |\xi| \leq 8/3 \}$ such that
\begin{equation}
	\sum_{j \in \Z} \varphi(2^{-j} \xi) = 1, \quad \xi \in \R^3 \setminus \{0\}.
	\label{}
\end{equation}
The homogeneous Littlewood-Paley projectors $\dot \Delta_j$ are defined by
\begin{equation}
	\dot \Delta_j f = \varphi(2^{-j} D) f, \quad j \in \Z,
	\label{}
\end{equation}
for all tempered distributions $f$ on $\R^d$ with values in $\R^m$. The notation $\varphi(2^{-j}D) f$ denotes convolution with the inverse Fourier transform of $\varphi(2^{-j}\cdot)$ with $f$.

 Let $p,q \in [1,\infty]$ and $s \in (-\infty,d/p)$.\footnote{The choice $s=d/p$, $q=1$ is also valid.} The homogeneous Besov space $\dot B^s_{p,q}(\R^d;\R^m)$ consists of all tempered distributions $f$ on $\R^d$ with values in $\R^m$ satisfying
	\begin{equation}
		\|{f}\|_{\dot B^s_{p,q}(\R^d;\R^m)} :=\Big(\sum_{j\in\mathbb{Z}} \big( 2^{js}\|\dot{\Delta}_{j} f\|_{L^{p}}\big)^{q}\Big)^{\frac{1}{q}}.
		\label{}
	\end{equation}
	and such that $\sum_{j \in \Z} \dot \Delta_j f$ converges to $f$ in the sense of tempered distributions on $\R^d$ with values in $\R^m$. In this range of indices, $\dot B^{s}_{p,q}(\R^d;\R^m)$ is a Banach space. When $s \geq 3/p$ and $q > 1$, the spaces must be considered \emph{modulo polynomials}. Note that other reasonable choices of the function $\varphi$ defining $\dot \Delta_j$ lead to equivalent norms. 

We now recall a particularly useful property of Besov spaces, i.e., their characterization in terms of the heat kernel. For all $s \in (-\infty,0)$, there exists a constant $c := c(s) > 0$ such that for all tempered distributions $f$ on $\R^3$,
		\begin{equation}
			c^{-1} \sup_{t > 0} t^{-\frac{s}{2}} \|e^{t\Delta} f\|_{L^p(\R^3)} \leq \|{f}\|_{\dot B^s_{p,\infty}(\R^3)} \leq c \sup_{t > 0} t^{-\frac{s}{2}}\|{e^{t\Delta} f}\|_{L^p(\R^3)}.
			\label{besovequivalentnorm}
		\end{equation}
Let $\Omega\subset\mathbb{R}^3$ be a domain with sufficiently smooth boundary. We say $u\in \dot{B}^{s}_{p,q}(\Omega)$ if 
\begin{enumerate}[label=(*)]
\item\label{extprop} (\textit{extension property}) there exists $E(u)\in \dot{B}^{s}_{p,q}(\mathbb{R}^3)$ such that $E(u)=u$ on $\Omega$ as distributions.
\end{enumerate}
Then 
$$\|u\|_{\dot{B}^{s}_{p,q}(\Omega)}:=\inf{\{\|E(u)\|_{\dot{B}^{s}_{p,q}(\mathbb{R}^3)}:\,\,\,\,\,E(u)\,\,\,\,\,\textrm{satisfies}\,\,\,\,\,\ref{extprop}\}}.
$$
In what follows, we will mostly use just one feature of the definition of Besov spaces on bounded domains:
\begin{equation}\label{Besovboundedcutoff}
u_{0}\in \dot{B}^{-1+\frac{3}{p}}_{p,\infty}(B_{2}(0)),\,\,\,\,\,\,\varphi\in C_{0}^{\infty}(B_{2}(0)) \Rightarrow  \|\varphi u_{0}\|_{B^{-1+\frac{3}{p}}_{p,\infty}(\mathbb{R}^3)}\leq C(\varphi) \|u_{0}\|_{\dot{B}^{-1+\frac{3}{p}}_{p,\infty}(B_{2}(0))}.
\end{equation}
The proof of this uses the definition of Besov spaces on bounded domains and the fact that for $\varphi$ in the Schwartz class $$\|f\varphi\|_{B^{-1+\frac{3}{p}}_{p,\infty}(\mathbb{R}^3)}\leq C(\varphi)\|f\|_{\dot{B}^{-1+\frac{3}{p}}_{p,\infty}(\mathbb{R}^3)}.$$
The proof of this is along the lines of Proposition 2.3 of \cite{LiWang2018blowup}.

We will also make use of a decomposition result for Homogeneous Besov spaces. The statement without (\ref{L2persistency}) can be found in \cite{AlbrittonBarkerBesov2018}. See also \cite{barker2017existence}.
\begin{lemma}\label{splitlem}
Let $p \in (3,\infty)$. There exist  $\gamma_1, \gamma_2 > 0$, and $C > 0$, each depending only on $p$, such that for each divergence-free vector field $g \in \dot{B}^{-1+\frac{3}{p}}_{p,\infty}\cap L^{2}(\mathbb{R}^3)$ and $N>0$, there exist divergence-free vector fields $\bar{g}^{N}\in \dot{B}^{-1+\delta_{2}}_{\infty,\infty}(\mathbb{R}^3)\cap \dot{B}^{-1+\frac3p}_{p,\infty}(\mathbb{R}^3)\cap L^{2}(\mathbb{R}^3)$ and $\widetilde{g}^{N}\in L^2(\mathbb{R}^3)\cap \dot{B}^{-1+\frac3p}_{p,\infty}(\mathbb{R}^3)$ with the following properties:
\begin{equation}\label{Vdecomp1}
g= \widetilde{g}^{N} + \bar{g}^{N},
\end{equation}
\begin{equation}\label{barV_2est}
\|\widetilde{g}^{N}\|_{L^2(\R^3)}\leq C N^{-\gamma_{2}} \|{g}\|_{\dot{B}^{-1+\frac{3}{p}}_{p,\infty}},
\end{equation} 
\begin{equation}\label{barV_1est}
\|\bar{g}^{N}\|_{\dot{B}^{-1+\delta_{2}}_{\infty,\infty}(\R^3)}\leq C N^{\gamma_{1}} \|{g}\|_{\dot{B}^{-1+\frac{3}{p}}_{p,\infty}}.
\end{equation}
Furthermore, 
\begin{equation}\label{barV_1est.1}
\|\widetilde{g}^{N}\|_{\dot{B}^{-1+\frac{3}{p}}_{p,\infty}(\R^3)}, \|\bar{g}^{N}\|_{\dot{B}^{-1+\frac{3}{p}}_{p,\infty}(\R^3)}\leq C \|{g}\|_{\dot{B}^{-1+\frac{3}{p}}_{p,\infty}}
\end{equation}
and
\begin{equation}\label{L2persistency}
\|\widetilde{g}^{N}\|_{L^{2}(\R^3)}, \|\bar{g}^{N}\|_{L^{2}(\R^3)}\leq C \|{g}\|_{L^{2}}.
\end{equation}
\end{lemma}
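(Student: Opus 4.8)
The plan is to take $\bar g^{N}$ and $\widetilde g^{N}$ to be, respectively, the low- and high-frequency parts of $g$ in its Littlewood--Paley decomposition, cut off at frequency of order $N$, and then verify the listed properties one at a time. Write $s:=-1+\tfrac{3}{p}\in(-1,0)$, let $J=J(N)\in\Z$ satisfy $2^{J}\le N<2^{J+1}$, and set
\[
\bar g^{N}\ :=\ \sum_{j\le J}\dot\Delta_{j}g,\qquad \widetilde g^{N}\ :=\ g-\bar g^{N}\ =\ \sum_{j>J}\dot\Delta_{j}g .
\]
Up to inessential modifications this is the splitting of \cite{AlbrittonBarkerBesov2018} (see also \cite{barker2017existence}), so that \eqref{Vdecomp1} and the bounds \eqref{barV_2est}, \eqref{barV_1est}, \eqref{barV_1est.1} are already known there; the genuinely new point is the $L^{2}$-stability \eqref{L2persistency}, which I obtain essentially for free at the end.

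I would first dispatch the structural facts. Each $\dot\Delta_{j}$ is a convolution operator, hence so is the low-pass projector $\sum_{j\le J}\dot\Delta_{j}$ and the complementary high-pass projector; both commute with $\nabla\cdot$, so $\bar g^{N}$ and $\widetilde g^{N}$ are divergence-free, and $g=\bar g^{N}+\widetilde g^{N}$ holds in $\mathcal S'(\R^{3})$ because $\sum_{j\in\Z}\dot\Delta_{j}g$ converges to $g$ there (this uses $s<3/p$). The Besov stability \eqref{barV_1est.1} is immediate: $\dot\Delta_{\ell}\bar g^{N}$ and $\dot\Delta_{\ell}\widetilde g^{N}$ only involve $\dot\Delta_{k}g$ with $|k-\ell|\le1$, so their $\dot B^{s}_{p,\infty}$ norms are $\le C\|g\|_{\dot B^{s}_{p,\infty}}$. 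For the growth bound \eqref{barV_1est}, Bernstein's inequality gives, for every low block $\ell\lesssim J$,
\[
\|\dot\Delta_{\ell}g\|_{L^{\infty}}\ \le\ C\,2^{3\ell/p}\|\dot\Delta_{\ell}g\|_{L^{p}}\ \le\ C\,2^{3\ell/p}\,2^{-\ell s}\|g\|_{\dot B^{s}_{p,\infty}}\ =\ C\,2^{\ell}\|g\|_{\dot B^{s}_{p,\infty}},
\]
hence $2^{\ell(-1+\delta_{2})}\|\dot\Delta_{\ell}g\|_{L^{\infty}}\le C\,2^{\ell\delta_{2}}\|g\|_{\dot B^{s}_{p,\infty}}$; taking the supremum in $\ell$ and using $2^{J}\le N$ yields \eqref{barV_1est} with $\gamma_{1}=\delta_{2}$.

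The $L^{2}$-smallness \eqref{barV_2est} is the delicate point, and the step I expect to be the main obstacle. The negative-regularity information $\|\dot\Delta_{j}g\|_{L^{p}}\le2^{-js}\|g\|_{\dot B^{s}_{p,\infty}}$ does not control $\|\dot\Delta_{j}g\|_{L^{2}}$, since no Bernstein embedding runs from $L^{p}$ into $L^{2}$ when $p>2$; so the hypothesis $g\in L^{2}$ must be used in an essential way. The mechanism is to combine the square-summability $\sum_{j}\|\dot\Delta_{j}g\|_{L^{2}}^{2}\le C\|g\|_{L^{2}}^{2}$ with the frequency localisation in order to turn it into a quantitative rate of decay of $\|\widetilde g^{N}\|_{L^{2}}^{2}\le C\sum_{j>J}\|\dot\Delta_{j}g\|_{L^{2}}^{2}$ as $N$ grows; this is precisely the estimate carried out in \cite{AlbrittonBarkerBesov2018}, which produces some $\gamma_{2}=\gamma_{2}(p)>0$, and I would invoke it rather than reprove it.

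It remains to prove the new ingredient \eqref{L2persistency}. Both the low-pass projector $\sum_{j\le J}\dot\Delta_{j}$ and its complement are bounded on $L^{2}(\R^{3})$ with operator norm depending only on the partition-of-unity function $\varphi$ (by Plancherel, via the uniform $L^{\infty}$ bound on their Fourier symbols), so $\|\bar g^{N}\|_{L^{2}},\,\|\widetilde g^{N}\|_{L^{2}}\le C\|g\|_{L^{2}}$ with $C=C(p)$. This also shows $\bar g^{N},\widetilde g^{N}\in L^{2}(\R^{3})$, as required for membership in the stated function spaces, and completes the proof.
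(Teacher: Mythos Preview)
Your frequency cutoff handles \eqref{Vdecomp1}, \eqref{barV_1est}, \eqref{barV_1est.1} and \eqref{L2persistency}, but it cannot yield \eqref{barV_2est}. Here is a concrete obstruction: fix $K$ large and take $g$ with $\widehat g$ supported in a small ball $B_r(\xi_0)\subset\{|\xi|\sim2^K\}$ (symmetrised and Leray-projected so as to be real and divergence-free). Then $\|g\|_{L^2}\sim r^{3/2}$ while $\|g\|_{\dot B^{s}_{p,\infty}}=2^{Ks}\|g\|_{L^p}\sim 2^{Ks}r^{3/p'}$, so that $\|g\|_{L^2}/\|g\|_{\dot B^{s}_{p,\infty}}\sim 2^{-Ks}r^{3(1/p-1/2)}\to\infty$ as $r\to0$ (since $p>2$). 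For any $N<2^K$ your high-frequency piece satisfies $\widetilde g^N=g$, and \eqref{barV_2est} would force $\|g\|_{L^2}\le CN^{-\gamma_2}\|g\|_{\dot B^s_{p,\infty}}$ uniformly in $r$, which is false. Your description of the ``mechanism'' also does not match the stated bound: square-summability of the blocks in $L^2$ gives tail decay controlled by $\|g\|_{L^2}$, not by $\|g\|_{\dot B^s_{p,\infty}}$, and for $p>2$ there is no Bernstein-type inequality controlling $\|\dot\Delta_jg\|_{L^2}$ by $\|\dot\Delta_jg\|_{L^p}$.

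The decomposition in \cite{AlbrittonBarkerBesov2018,barker2017existence} is not a pure frequency cutoff but a Calder\'on-type splitting: on each Littlewood--Paley block $f_j=\dot\Delta_jg$ one truncates in physical space, $f_j=f_j 1_{\{|f_j|\le\lambda_j\}}+f_j 1_{\{|f_j|>\lambda_j\}}$, at a scale-adapted threshold $\lambda_j$, re-projects with a fattened Littlewood--Paley multiplier and the Leray projector to restore frequency localisation and the divergence constraint, and then assembles $\bar g^N$ from the bounded pieces and $\widetilde g^N$ from the large pieces. The elementary inequality $\|f_j 1_{\{|f_j|>\lambda_j\}}\|_{L^2}^2\le\lambda_j^{2-p}\|f_j\|_{L^p}^p$ (valid because $p>2$) is exactly what converts the Besov control into the $L^2$ bound \eqref{barV_2est}, with no $L^2$ hypothesis on $g$ needed; this is the step a frequency cutoff cannot supply. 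With that construction the new estimate \eqref{L2persistency} is still immediate --- pointwise truncation contracts in $L^2$ and the projectors involved are $L^2$-bounded --- but the Plancherel argument you wrote is tied to the wrong construction and should be replaced by this observation.
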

Finally, we state known results for the Navier-Stokes equations with data in $\dot{B}^{-1+\frac{3}{p}}_{p,\infty}(\mathbb{R}^3).$ We refer the reader to  \cite{Plan96}, for example.
\begin{proposition}
\label{prop.mildBesov}
Let $S_{mild}\in(0,\infty)$ and $p\in(3,\infty)$. There exists two constants $\gamma(p)\in (0,\infty)$ and $K_0''(p)\in (0,\infty)$ such that the following holds true. For all divergence-free $u_{0,a}\in \dot{B}^{-1+\frac{3}{p}}_{p,\infty}(\mathbb{R}^3)$, 
$$\sup_{0<t<S_{mild}} t^{\frac{1}{2}(1-\frac{3}{p})}\|e^{t\Delta}u_{0,a}\|_{L^{p}}\leq\gamma(p),$$ 
there exists a smooth mild solution $a\in   C_{w^{*}}([0,S_{mild});\dot{B}^{-1+\frac{3}{p}}_{p,\infty})\cap L^\infty((0,S_{mild});\dot{B}^{-1+\frac{3}{p}}_{p,\infty})$ such that $a(\cdot,0)=u_{0,a}$ and
\begin{multline}\label{e.boundmildBesov}
\sup_{t\in(0,S_{mild})}\big(\|a(\cdot,t)\|_{\dot{B}^{-1+\frac{3}{p}}_{p,\infty}}+ t^{\frac{1}{2}(1-\frac{3}{p})}\|a(\cdot,t)\|_{L^{p}}+t^\frac12\|a(\cdot,t)\|_{L^\infty}\big)\\
\leq K_0''(p)\sup_{t\in(0,S_{mild})} t^{\frac{1}{2}(1-\frac{3}{p})}\|e^{t\Delta}u_{0,a}\|_{L^{p}} .
\end{multline}
The mild solution is unique in the class of solutions with sufficiently small 
$$\sup_{t\in(0,S_{mild})}t^{\frac{1}{2}(1-\frac{3}{p})}\|a(\cdot,t)\|_{L^{p}}$$ 
norm.
\end{proposition}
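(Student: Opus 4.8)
The plan is to run the classical Kato fixed point scheme for the mild equation $a(t)=e^{t\Delta}u_{0,a}-B(a,a)(t)$, where $B(u,v)(t):=\int_0^t e^{(t-s)\Delta}\mathbb{P}\nabla\cdot(u\otimes v)(s)\,ds$ and $\mathbb{P}$ is the Leray projection, in the scale-invariant path space $Y:=X_p\cap X_\infty$ with
\[
\|a\|_{X_q}:=\sup_{0<t<S_{mild}}t^{\frac12(1-\frac3q)}\|a(\cdot,t)\|_{L^q(\R^3)},\qquad q\in\{p,\infty\}.
\]
By hypothesis $\|e^{t\Delta}u_{0,a}\|_{X_p}\le\gamma(p)$, and by the Bernstein embedding $\dot B^{-1+\frac3p}_{p,\infty}\hookrightarrow\dot B^{-1}_{\infty,\infty}$ together with the heat characterization \eqref{besovequivalentnorm} we also get $\|e^{t\Delta}u_{0,a}\|_{X_\infty}\le C(p)\gamma(p)$. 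It then suffices to make the bilinear term contractive on a small ball of $Y$.

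The core is the bilinear estimate $\|B(u,v)\|_Y\le C_0(p)\|u\|_Y\|v\|_Y$. It follows from the Oseen kernel bound $\|e^{\tau\Delta}\mathbb{P}\nabla\cdot F\|_{L^q}\le C\tau^{-\frac12-\frac32(\frac1r-\frac1q)}\|F\|_{L^r}$, applied with $F=u\otimes v$ and $\frac1r=\frac1p+\frac1q$ (so $u\otimes v\in L^r$ by H\"older and $\mathbb{P}$ is bounded on $L^r$ since $r>1$), followed by the observation that $t^{\frac12(1-\frac3q)}\int_0^t(t-s)^{-\frac12-\frac{3}{2p}}s^{-\frac12(1-\frac3p)-\frac12(1-\frac3q)}\,ds$ equals a finite, $t$-independent constant; this is a Beta integral which converges for both $q=p$ and $q=\infty$ precisely because $p>3$ makes $\frac12+\frac{3}{2p}<1$, the remaining exponent being automatically $<1$. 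Having fixed $C_0(p)$, one sets $\gamma(p):=\tfrac{1}{4C_0(p)}$ (shrinking further so that the $X_\infty$ piece of the data is also $\le\tfrac{1}{4C_0(p)}$) and invokes the standard abstract bilinear fixed point lemma (see \cite{LR02,Plan96}) to obtain a unique solution $a\in Y$ with $\|a\|_Y\le2\gamma(p)$. Uniqueness in the small ball of $Y$ is exactly the asserted uniqueness, and $\|a\|_Y\le2\gamma(p)$ gives the $\|a(\cdot,t)\|_{L^p}$ and $t^{1/2}\|a(\cdot,t)\|_{L^\infty}$ parts of \eqref{e.boundmildBesov} with $K_0''(p)=C(p)$.

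It remains to check the $\dot B^{-1+\frac3p}_{p,\infty}$ bound, weak-$*$ continuity and smoothness. For the Besov bound I would apply \eqref{besovequivalentnorm}: writing $e^{\sigma\Delta}a(t)=e^{(\sigma+t)\Delta}u_{0,a}-\int_0^t e^{(\sigma+t-s)\Delta}\mathbb{P}\nabla\cdot(a\otimes a)\,ds$, estimating the first term via $\sigma\le\sigma+t$ on the positive power and the second by the same bilinear computation now carrying the extra heat parameter $\sigma$, one bounds $\sup_{\sigma>0}\sigma^{\frac12(1-\frac3p)}\|e^{\sigma\Delta}a(t)\|_{L^p}$ by $C(p)(\|u_{0,a}\|_{\dot B^{-1+\frac3p}_{p,\infty}}+\|a\|_Y^2)$, hence by the right-hand side of \eqref{e.boundmildBesov} after enlarging $K_0''(p)$. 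Weak-$*$ continuity $a\in C_{w^*}([0,S_{mild});\dot B^{-1+\frac3p}_{p,\infty})$ follows by testing against Schwartz functions, using that $t\mapsto e^{t\Delta}u_{0,a}$ is weak-$*$ continuous and that the bilinear term is strongly continuous into $L^p$ locally uniformly for $t$ away from $0$ and tends to $0$ in the tested pairing as $t\to0$. Finally, smoothness for $t>0$ is the usual parabolic bootstrap: once $a\in L^\infty_{loc}((0,S_{mild});L^\infty)$, iterating the Duhamel formula with the smoothing of the Oseen kernel upgrades $a$ to $C^\infty(\R^3\times(0,S_{mild}))$ and shows it is a classical solution.

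The main obstacle, I expect, is the endpoint nature of $\dot B^{-1+\frac3p}_{p,\infty}$: because the last index is $\infty$ the Schwartz class is not dense, so the fixed point is obtained in the Kato path space and only weak-$*$ continuity in time is available; consequently the persistence of the critical norm cannot be read off from a Banach-space argument in $C([0,S_{mild});\dot B^{-1+\frac3p}_{p,\infty})$ and must be extracted through the heat-kernel characterization as above. A secondary point is bookkeeping: one must check that $\gamma(p)$ and $K_0''(p)$ depend only on $p$ and not on $S_{mild}$, which is automatic since every estimate above is invariant under the Navier-Stokes rescaling encoded in the norms of $X_p$ and $X_\infty$, so $S_{mild}$ never enters any constant.
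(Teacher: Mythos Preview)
Your proof is correct and is precisely the standard Kato--Planchon fixed-point argument; the paper itself does not prove this proposition but simply cites \cite{Plan96} for it, so there is no ``paper's own proof'' to compare against beyond the reference. Your sketch matches the approach in that reference.
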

\subsection{Besov initial data: Section \ref{sec.boundedness}}
In this section we should now assume
$$\sup_{0<s<T} s^{\frac{1}{2}(1-\frac{3}{p})}\|a(\cdot,s)\|_{L^{p}(\mathbb{R}^3)}\leq \varepsilon_{**},$$
where $\ep_{**}>0$ is some small universal constant. With this adjustment, the arguments in Section \ref{sec.boundedness} are the same as in the case of $L^{3,\infty}$ initial data.
\subsection{Besov initial data: Section \ref{sec.proof}}
\subsubsection{The extension operator}
Throughout this part we assume 
\begin{equation*}\label{Besovbound}
\|u_{0}\|_{\dot{B}^{-1+\frac{3}{p}}_{p,\infty}(B_{2}(0))}\leq\gamma_{univ}\,\,\,\,\,\,\,\textrm{and}\,\,\,\,\,\,\,\,\|u_{0}\|_{L^{2}(B_{2}(0))}\leq M.
\end{equation*}
For convenience, we assume without loss of generality that $p\in(6,\infty)$.
Let $\chi\in C^\infty_c(\R^3)$ be a cut-off function such that 
\begin{equation*}\label{e.cutoffBesov}
0\leq\chi\leq 1,\quad \supp\chi\subset B_2(0),\quad\chi=1\ \mbox{on}\ B_\frac{3}{2}(0),\quad |\nabla\chi|\leq K_3,
\end{equation*}
where $K_3\in(0,\infty)$ is a universal constant.
From the preliminaries we have
\begin{equation}\label{cutoffBesovestimate}
\|\chi u_{0}\|_{\dot{B}^{-1+\frac{3}{p}}_{p,\infty}(\mathbb{R}^3)}\leq C(\chi)\gamma_{univ}\leq \gamma{'}_{univ}.
\end{equation}
Obviously, $$\|\chi u_{0}\|_{L^{2}(\mathbb{R}^3)}\leq C\|u_{0}\|_{L_{2}(B^{2}(0))}.$$
 Then, we introduce $\tilde u_{0,a}$ given by Bogovskii's lemma, such that
\begin{align}\label{e.bogoBesov}
\begin{split}
&\nabla\cdot\tilde u_{0,a}=u_0\cdot\nabla\chi,\quad \tilde u_{0,a}=0\ \mbox{on}\ \partial(B_2(0)\setminus B_\frac{3}{2}(0)),\\
&\|\tilde u_{0,a}\|_{L^6(B_{2}(0)\setminus B_{\frac{3}{2}}(0)))}\leq  K_4\|u_0\cdot\nabla\chi\|_{L^2(B_2(0)\setminus B_{\frac{3}{2}}(0))}\leq K_3K_4\|u_0\|_{L^{2}(B_{2}(0))},\\
&\|\tilde u_{0,a}\|_{L^2(B_{2}(0)\setminus B_{\frac{3}{2}}(0))}\leq K_4\|u_0\cdot\nabla\chi\|_{L^2(B_2(0)\setminus B_\frac{3}{2}(0))}\leq K_3K_4\|u_0\|_{L^2(B_{2}(0))},
\end{split}
\end{align}
where $K_4\in(0,\infty)$ is a universal constant. We extend $\tilde u_{0,a}$ by $0$ and let
\begin{equation*}
u_{0,a}=u_0\chi-\tilde u_{0,a}.
\end{equation*}
Clearly,
\begin{equation}\label{L2est}
\|u_{0,a}\|_{L_{2}(\mathbb{R}^3)}\leq (C+K_{3}K_{4})\|u_{0}\|_{L^{2}(\mathbb{R}^3)}.
\end{equation}
Since $\tilde{u}_{0,a}$ has compact support and $L_{3}\hookrightarrow \dot{B}^{-1+\frac{3}{p}}_{p,\infty}$ we have 
\begin{equation}\label{Besovest}
\|u_{0,a}\|_{\dot{B}^{-1+\frac{3}{p}}_{p,\infty}(\mathbb{R}^3)}\leq \gamma'_{univ}+ K_{3}K_{4}\|u_{0}\|_{L^{2}(B_{2}(0))}.
\end{equation}
Using \eqref{cutoffBesovestimate}-\eqref{e.bogoBesov}, together with the heat flow characterisation of Besov spaces gives
\begin{equation}\label{heatflowest}
\sup\limits_{0<t<T} t^{\frac{1}{2}(1-\frac{3}{p})}\|e^{t\Delta} u_{0,a}\|_{L^{p}(\mathbb{R}^3)}\leq C(\chi)\gamma_{univ}+ K_{3}K_{4} MT^{\frac{1}{4}}.
\end{equation}
Using this and Proposition \ref{prop.mildBesov}, there exists $\hat{T}(M,\gamma)$ and  a mild solution $a(\cdot, u_{0,a})$ associated to $u_{0,a}$ on $\mathbb{R}^3 \times (0, \hat{T}(M,\gamma))$. Furthermore, 
$$\sup\limits_{s\in(0,\hat{T})}\big( s^{\frac{1}{2}(1-\frac{3}{p})}\|a(\cdot,s)\|_{L_{p}}+ s^{\frac{1}{2}}\|a(\cdot,s)\|_{L_{\infty}}\big)\leq K_{0}''(p)\sup\limits_{s\in(0,\hat{T})} s^{\frac{1}{2}(1-\frac{3}{p})}\|e^{s\Delta} u_{0,a}\|_{L^{p}(\mathbb{R}^3)}. $$
 Moreover, \eqref{L2est}-\eqref{heatflowest} and Theorem 3.1 of \cite{Barker18} imply that for $t\in (0, \hat{T}(M,\gamma))$:
\begin{equation}\label{estnearinitialtimemildsolution}
\|a(\cdot,t)-e^{t\Delta} u_{0,a}\|_{L^{2}(\mathbb{R}^3)}^2+ \int\limits_{0}^{t}\int\limits_{\mathbb{R}^3} |\nabla(a-e^{t\Delta} u_{0,a})|^2 dxdt'\leq C(M, \hat{T},p) t^{\frac{1}{p-2}}.
\end{equation}
\subsubsection{Local decay estimates near the initial time} Now clearly $v=u-a$ has zero initial data locally on the ball $B_{\frac{3}{2}}(0)$. We next wish to show that for $t\in (0, \min(1,\hat{T}))$ we have
\begin{equation*}\label{Besovdecayinitialtime}
\|v(\cdot,t)\|_{L_{2}(B_{1}(0))}^2+ \int\limits_{0}^{t}\int\limits_{B_{1}(0)} |\nabla v|^2 dxdt' \leq C(M, \hat{T}, \gamma_{univ}) t^{\nu(p)}.
\end{equation*}
for some $\nu(p)>0$. With (\ref{estnearinitialtimemildsolution}) in mind, it is sufficient to show that for $t\in (0, \min( 1, S_{lews}) )$:
\begin{equation*}\label{Besovfluctuationinitialtime}
\|u(\cdot,t)-e^{t\Delta}u_{0,a}\|_{L_{2}(B_{1}(0))}^2+ \int\limits_{0}^{t}\int\limits_{B_{1}(0)} |\nabla (u-e^{t\Delta}u_{0,a})|^2 dxdt' \leq C(M, \hat{T}, \gamma_{univ}) t^{\nu(p)}.
\end{equation*}
In order to show this, we use splitting arguments inspired by the work of C\'{a}lder\'{o}n \cite{calderon1990existence}. The arguments we present here closely follow those presented in \cite{JS13}, \cite{Barker18} and \cite{AlbrittonBarkerBesov2018}.
According to Lemma \ref{splitlem}, we split $u_{0,a}$ into two divergence-free pieces:
\begin{equation}\label{Besovsplitting}
u_{0,a}= \widetilde{u_{0,a}}^{N}+\overline{u_{0,a}}^{N}
\end{equation}
\begin{equation}\label{L2part}
\|\widetilde{u_{0,a}}^{N}\|_{L^{2}}\leq C(M,\gamma)N^{-\gamma_{2}}
\end{equation}
\begin{equation}\label{Besovpart}
\|\overline{u_{0,a}}^{N}\|_{\dot{B}^{-1+\delta_{2}}_{\infty,\infty}}\leq C(M,\gamma) N^{\gamma_{1}}
\end{equation}
\begin{equation}\label{L2subcritical}
\|\overline{u_{0,a}}^{N}\|_{L^{2}}+\|\widetilde{u_{0,a}}^{N}\|_{L^{2}}\leq C(M). 
\end{equation}
Define $u^{N}:= u- e^{t\Delta}\overline{u_{0,a}}^{N}$.
Then
\begin{equation*}\label{VNequation}
\partial_{t}u^{N}-\Delta u^{N}+ u^{N}\cdot \nabla u^{N}+ e^{t\Delta}\overline{u_{0,a}}^{N}\cdot \nabla u^{N}+u^{N}\cdot \nabla e^{t\Delta}\overline{u_{0,a}}^{N}+ \nabla p= -e^{t\Delta}\overline{u_{0,a}}^{N}\cdot\nabla e^{t\Delta}\overline{u_{0,a}}^{N},
\end{equation*}
\begin{equation*}\label{VNdivfree}
\nabla\cdot u^{N}=0
\end{equation*}
\begin{equation*}\label{initialcondition}
u^{N}(x,0)= \widetilde{u_{0,a}}^{N}\,\,\,\,\,\textrm{in}\,\,\,\,\,\,\, B_{\frac{3}{2}}(0).
\end{equation*}
We remark that $p$ is the pressure associated to the original local energy solution $u$. From Proposition \ref{prop.lews} and \eqref{L2subcritical}, we have
\begin{equation}\label{e.aprioriBesov}
\sup_{s\in(0,S_{lews})}\sup_{\bar x\in\R^3}\int\limits_{B_1(\bar x)}\frac{|u^{N}(x,s)|^2}{2}\, dx+\sup_{\bar x\in\R^3}\int\limits_0^{S_{lews}}\int\limits_{B_1(\bar x)}|\nabla u^{N}(x,s)|^2\, dx\, ds\leq C(M).
\end{equation}
Let $\phi\in C^\infty_c(\R^3)$ such that $0\leq\phi\leq 1$, $\supp\phi\subset B_\frac32(0)$, $\phi=1$ on $B_1(0)$, $|\nabla(\phi^2)|\leq K_5$ and $|\Delta(\phi^2)|\leq K_5'$ where $K_5,\ K_5'\in (0,\infty)$ are a universal constants. \\
For $t\in (0,\min(1, S_{lews}))$ we have:
\begin{align*}
&\int\limits_{\R^3}|u^N(\cdot,t)|^2\phi^2 dx+2\int\limits_{0}^t\int\limits_{\R^3}|\nabla u^{N}|^2\phi^2 dxds
\leq \|\widetilde{u_{0,a}^{N}}\|_{L_{2}}^{2}\\&+ \int\limits_{0}^t\int\limits_{\R^3}|u^{N}|^2\Delta(\phi^2) dxds+
\int\limits_0^t\int\limits_{\R^3}|u^{N}|^2u^{N}\cdot\nabla(\phi^2) dxds\\&+\int\limits_0^t\int\limits_{\R^3}2qu^{N}\cdot\nabla(\phi^2) dxds
-\int\limits_0^t\int\limits_{\R^3}(e^{t\Delta}\overline{u_{0,a}}^{N}\cdot\nabla u^{N})\cdot u^{N}\phi^2 dxds\\&+\int\limits_0^t\int\limits_{\R^3}(e^{t\Delta}\overline{u_{0,a}}^{N}\otimes u^{N}):\nabla u^{N} \phi^2 dxds
+\int\limits_0^t\int\limits_{\R^3}(e^{t\Delta}\overline{u_{0,a}}^{N}\otimes u^{N}):u^{N}\otimes\nabla(\phi^2) dxds\\&+\int\limits_0^t\int\limits_{\R^3}(e^{t\Delta}\overline{u_{0,a}}^{N}\otimes e^{t\Delta}\overline{u_{0,a}}^{N}):\nabla u^{N} \phi^2 dxds\\
&+\int\limits_0^t\int\limits_{\R^3}(e^{t\Delta}\overline{u_{0,a}}^{N}\otimes e^{t\Delta}\overline{u_{0,a}}^{N}):u^{N}\otimes\nabla(\phi^2) dxds \\ &= I_{0}+I_1+\ldots\ I_8,
\end{align*}
Using \eqref{L2part}, we have $I_{0}\leq C(M, \gamma) N^{-2\gamma_{2}}.$
Using \eqref{e.aprioriBesov} and the same arguments as in Section \ref{sec.proof} gives
\begin{equation*}\label{I1-3Besov}
|I_{1}|+|I_{2}|+|I_{3}|\leq C(M)t^{\frac{1}{10}}.
\end{equation*}
Furthermore, using \eqref{e.aprioriBesov} and \eqref{Besovpart} we obtain
\begin{equation*}\label{I6I5}
|I_{4}|+|I_{5}|+|I_{6}|\leq C(M,\gamma, \delta_{2}) N^{\gamma_{1}}t^{\frac{\delta_{2}}{2}}.
\end{equation*}
Next, we may use \eqref{Besovpart}-\eqref{L2subcritical} to see that
$$\int\limits_{0}^{t}\int\limits_{\mathbb{R}^3} |e^{t\Delta}\overline{u_{0,a}}^{N}|^4 dxdt'\leq C(M,\gamma,\delta_{2}) N^{2\gamma_{1}} t^{\delta_{2}}. $$
This may be used with \eqref{e.aprioriBesov} to show
\begin{equation*}\label{I7I8Besov}
|I_{7}|+|I_{8}|\leq C(M,\gamma, \delta_{2})N^{2\gamma_{1}}t^{\delta_{2}}.
\end{equation*}
Thus for $t\in (0,\min(1, S_{lews}))$ we have
\begin{multline*}
\|u^{N}(\cdot,t)\|_{L^{2}(B_{1}(0))}+\int\limits_{0}^{t}\int\limits_{B_{1}(0)} |\nabla u^{N}(x,t')|^2 dxdt'\\
\leq  C(M,\gamma, \delta_{2})(N^{-2\gamma_{2}}+N^{\gamma_{1}}+N^{2\gamma_{1}})t^{\min(\frac{1}{10}, \frac{\delta_{2}}{2})}.
\end{multline*}
Noting that $u-e^{t\Delta} u_{0,a}= u^{N}- \widetilde{u_{0,a}}^{N}$, we thus obtain for $t\in (0,\min(1, S_{lews}))$ and $N\in (0,\infty)$ that 
\begin{multline*}
\|u(\cdot,t)-e^{t\Delta} u_{0,a}\|_{L^{2}(B_{1}(0))}+\int\limits_{0}^{t}\int\limits_{B_{1}(0)} |\nabla (u-e^{t\Delta}u_{0,a})|^2 dxdt'\\
\leq  C(M,\gamma, \delta_{2})(N^{-2\gamma_{2}}+N^{\gamma_{1}}+N^{2\gamma_{1}})t^{\min(\frac{1}{10}, \frac{\delta_{2}}{2})}.
\end{multline*}
Choosing $N= t^{-\beta}$, where $\beta>0$ is sufficiently small, then yields the desired estimate \eqref{Besovfluctuationinitialtime}.

\subsection{Besov initial data: Section \ref{sec.morrey}}
In this section we give the adjustments needed to prove Theorem \ref{theo.morrey} in the case of a drift $a$, which rather than satisfying the global $L^5(Q_1(0,0))$ bound \eqref{e.epmorreya}, just satisfies 
\begin{equation*}
\sup_{s\in (-1,0)}|s-t_0|^\frac15\|a(\cdot,s)\|_{L^5(B_1(0))}<\infty
\end{equation*} 
and small, for a fixed $t_0\in [-1,0]$. This extension is needed to deal with the case of locally Besov initial data $\dot B^{-1+\frac3p}_{p,\infty}$, with $p=5$, for which the mild solution just satisfies \eqref{e.boundmildBesov}. We actually prove the following theorem which allows to handle any $p\in(3,\infty)$.

\begin{theorem}\label{theo.morreygen}
Let $t_0\in[-1,0]$ and $\eta\in(0,1)$ be fixed. For all $\delta\in(0,3)$, there exists $C_*(\delta)\in (0,\infty)$, 
for all $E\in(0,\infty)$, 
there exists $\ep_*(\delta,\eta,E)\in (0,\infty)$, for all $a$ such that
\begin{equation*}
\sup_{s\in(-1,0)}|s-t_0|^{\frac12}\|a(\cdot,s)\|_{L^\infty(B_1(0))}<\infty
\end{equation*}
 and all local suitable solution $v$ to \eqref{e.pertns} in $Q_1(0,0)$ such that\footnote{By definition $(\cdot)_+:=\max(0,\cdot)$.}
\begin{align}
\int\limits_{B_1(0)}|v(x,s)|^2dx+\int\limits_{-1}^{s}\int\limits_{B_1(0)}|\nabla v|^2dxds\leq\ & E(s-t_0)_+^\eta,\quad\forall s\in(-1,0),\label{e.locenEgen}\\
\int\limits_{-1}^s\int\limits_{B_1(0)}|q|^\frac32dxd\hat s\leq\ &E(s-t_0)_+^{\frac34\eta},\quad\forall s\in(-1,0),\label{e.locenPgen}
\end{align}
the conditions\footnote{Notice that \eqref{e.epmorreyagen} can be also replaced by the weaker assumption
\begin{equation*}
\sup_{s\in(-1,0)}(s-t_0)_+^{\frac12}\|a(\cdot,s)\|_{L^\infty(B_1(0))}
\leq \ep_*.
\end{equation*}}
\begin{equation}\label{e.epmorreyagen}
\sup_{s\in(-1,0)}|s-t_0|^{\frac12}\|a(\cdot,s)\|_{L^\infty(B_1(0))}
\leq \ep_*
\end{equation}
and
\begin{equation}\label{e.epmorreygen}
\int\limits_{Q_1(0,0)}|v|^3+|q|^\frac32dxds\leq\ep_*
\end{equation}
imply that for all $(\bar x,t)\in \bar Q_{1/2}(0,0)$, for all $r\in(0,\frac14]$,
\begin{equation}\label{e.morreyboundgen}
\intbar_{Q_r(\bar x,t)}|v|^3dxds\leq C_*\ep_*^\frac23r^{-\delta}.
\end{equation}
\end{theorem}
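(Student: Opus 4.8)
Fix $(\bar x,t)\in\bar Q_{1/2}(0,0)$. If $t\le t_0$, then by \eqref{e.locenEgen} and \eqref{e.locenPgen} one has $v\equiv 0$ and $q\equiv 0$ on $B_1(0)\times(-1,t_0]$; since $|\bar x|\le\tfrac12$ and $r\le\tfrac14$ we have $Q_r(\bar x,t)\subset B_1(0)\times(-1,t_0]$, so $v\equiv 0$ on $Q_r(\bar x,t)$ and \eqref{e.morreyboundgen} is trivial. So assume $t>t_0$ and set $d:=t-t_0\in(0,1]$. The plan is to rerun the Caffarelli--Kohn--Nirenberg iteration of Theorem~\ref{theo.morrey}. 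Two features of the present setting are used throughout. First, since $v$ (hence every $a$-integrand appearing below) vanishes for $s\le t_0$, one integrates only over $s\in(t_0,t)$, where \eqref{e.epmorreyagen} gives the integrable-in-time bound $\|a(\cdot,s)\|_{L^\infty(B_1(0))}\le\ep_*(s-t_0)^{-1/2}$. Second, on scales with $r_k^2\le d/2$ (here $r_n=2^{-n}$) the cylinder $Q_{r_k}(\bar x,t)$ stays at distance $\ge d/2$ from $t_0$, so there $\|a\|_{L^5(Q_{r_k}(\bar x,t))}\le C\ep_*\,r_k(d/2)^{-1/2}\le C\ep_*$ and the iteration is literally that of Theorem~\ref{theo.morrey}.

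The substance is therefore the treatment of the scales with $r_k^2>d/2$, for which $Q_{r_k}(\bar x,t)$ reaches below $t_0$. On those scales one propagates bounds of the form $(A_k)$, $(B_k)$ of the proof of Theorem~\ref{theo.morrey} decorated with an extra decay factor in $d/r_k^2$, consistent with \eqref{e.locenEgen}--\eqref{e.locenPgen} at the large scale and gained on smaller scales; the exponent of this decay factor (a small multiple of $\eta$) must be tuned so that (i) it is still implied by the a priori bounds at the top scale $r_2=\tfrac14$ --- where one combines \eqref{e.epmorreygen} with the interpolation bound $\int_{Q_{r_2}(\bar x,t)}|v|^3\le CE^{3/2}d^{3\eta/2+1/4}$ (and its pressure analog from \eqref{e.locenPgen}) coming from \eqref{e.locenEgen} and Sobolev embedding --- and (ii) it closes the induction below. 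The inductive implications, namely that $(A_k)$ and $(B_k)$ for $2\le k\le n$ imply $(B_{n+1})$ and that $(B_k)$ for $2\le k\le n+1$ implies $(A_{n+1})$, follow Steps~3--4 of Theorem~\ref{theo.morrey}, using the test functions $\phi_n$ of Lemma~\ref{lem.test}, with modifications only in the $a$-dependent terms.

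Those terms --- $I_4=\int|a|\,|v|\,|\nabla v|\,|\phi_n|$ and $I_5=\int|a|\,|v|^2\,|\nabla\phi_n|$ in the local energy inequality, and the nonlocal pressure pieces $q_{1,j}$, $q_{2,j}$, $q_{3,j}$ ($j=2,3$) in the analog of Lemma~\ref{lem.pressure} --- are estimated by pairing $\|a(\cdot,s)\|_{L^\infty(B_1)}\le\ep_*(s-t_0)^{-1/2}$ with $v$ kept in $L^2_x$ (for the pressure pieces Calder\'on--Zygmund is applied at exponent $3/2$ rather than $15/8$, exploiting that $a\in L^\infty_x$), and then H\"older's inequality in time against the decay encoded in the propagated $(B_k)$; the singular weight $(s-t_0)^{-1/2}$ is absorbed against this decay, the crucial time integrals $\int_{t_0}^t(s-t_0)^{\alpha-1}\,ds$ being finite precisely because the decay exponent $\alpha$ is positive (this is where $\eta>0$ enters). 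The resulting bounds are no worse than the $\|a\|_{L^5}\le\ep_*$-based bounds of Theorem~\ref{theo.morrey}, up to extra powers of $d\le1$ and constants depending on $\eta$ and $E$; the pressure--pressure contributions $q_4,q_5$ are controlled with \eqref{e.locenPgen} in place of the naive $L^{3/2}_{t,x}$ bound, and the $v$-only terms are unchanged. With these estimates the same structural choices of $C_B$ and of $\ep_*=\ep_*(\delta,\eta,E)$ (the constants $D_i$ augmented by the $\eta$-dependent factors) close the induction. Passing from the dyadic scales $r_n$ to all $r\in(0,\tfrac14]$ produces $C_*(\delta)$ exactly as in Theorem~\ref{theo.morrey}, and since the decay factors are $\le1$ this yields \eqref{e.morreyboundgen}.

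The main obstacle is the uniformity in $d=t-t_0$ of the near-$t_0$ analysis. Because $a$ is controlled only in $L^\infty_x$ against a singular time-weight, pairing it with $v$ forces one to keep $v$ in $L^2_x$, losing the extra spatial integrability ($v\in L^{10/3}$) that $(B_k)$ would otherwise supply; consequently the $a$-estimates on scales straddling $t_0$ close only thanks to the propagated local energy decay and the shortness $d\le r_k^2$ of the effective time interval $(t_0,t)$. One must choose the decay exponent small enough relative to $\eta$ that simultaneously the top-scale a priori bounds still imply the decorated $(A_2)$ and the geometric series over the $\sim\log_2(1/\sqrt d)$ scales straddling $t_0$ converges with ratio strictly less than $1$, uniformly in $d$, so that $\ep_*$ and $C_B$ are independent of $(\bar x,t)$ and $t_0$. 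Exploiting the local kinetic-energy decay inside the Caffarelli--Kohn--Nirenberg scheme in this way is the genuinely new point; it refines the Lorentz-space-in-time device used for $L^{3,\infty}$ data in Appendix~\ref{app.b}.
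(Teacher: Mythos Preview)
Your overall strategy is correct and you identify the key mechanism: pair the singular weight $(s-t_0)^{-1/2}$ from $a$ against a propagated time decay of the local energy of $v$. However, the paper's implementation differs from yours in a way that resolves an internal tension in your write-up.

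You describe the propagated bounds as $(A_k)$, $(B_k)$ ``decorated with an extra decay factor in $d/r_k^2$'', i.e.\ a scale-dependent \emph{constant}. But you then invoke time integrals $\int_{t_0}^t(s-t_0)^{\alpha-1}\,ds$, which requires a \emph{pointwise-in-$s$} factor $(s-t_0)^{\alpha}$ inside the integrand. The paper makes this precise by propagating, uniformly over all $k\ge 2$ and all $s\in(t-r_k^2,t)$, the time-dependent bound
\begin{equation*}
(B_k')\qquad \int_{B_{r_k}(\bar x)}|v(x,s)|^2dx+\int_{t-r_k^2}^s\int_{B_{r_k}(\bar x)}|\nabla v|^2\le C_B(s-t_0)_+^{\eta'}\ep_*^{\frac23}r_k^{3-\frac23\delta},
\end{equation*}
together with an analogous $(A_k')$ for $\int|v|^3$ and a pressure bound $(A_k'')$, with $\eta'=\eta/6$. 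The factor $(s-t_0)_+^{\eta'}$, not an endpoint quantity, is precisely what is fed back into $I_4'=\int|a||v||\nabla v|\phi_n$: one bounds $\|v(\cdot,\hat s)\|_{L^2(B_{r_k})}^2$ by $C_B\ep_*^{2/3}(\hat s-t_0)_+^{\eta'}r_k^{3-\frac23\delta}$ from $(B_k')$, making $\int_{t_0}^s(\hat s-t_0)^{\eta'-1}\,d\hat s$ converge while simultaneously supplying the $\ep_*^{2/3}$ smallness needed to close the induction. An endpoint-only decoration would force you to take the pointwise decay from \eqref{e.locenEgen} instead, losing that $\ep_*$ factor; the bookkeeping can still be made to close but is more delicate than you indicate.

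The uniform pointwise-in-$s$ formulation also removes the need for your near/far split. In particular, your claim that on far scales ``the iteration is literally that of Theorem~\ref{theo.morrey}'' is too quick: the sums $\sum_{k\le n}$ in $I_4$, $I_5$ and in the pressure decomposition always include the near scales, where $a\notin L^5_{t,x}$, so even the far-scale step must use the $L^\infty_x$-based estimates. The paper handles this by rewriting the pressure lemma (Lemma~\ref{lem.pressurebis}) entirely in terms of $M_a=\sup_s|s-t_0|^{1/2}\|a(\cdot,s)\|_{L^\infty}$ paired with time-weighted $L^2$ integrals of $v$, and then runs Steps~1--4 with $(A_k')$, $(A_k'')$, $(B_k')$ at every scale. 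Your hybrid scheme can be made to work, but the paper's approach is both simpler to execute and closer to what your own final paragraph is actually describing.
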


We note that \eqref{e.locenEgen} implies in particular that for all $s\in(-1,0)$, $v(\cdot,s)=0$ a.e. on $B_1(0)$. As was emphasized just below Theorem \ref{theo.morrey}, the constant $C_*$ only depends on $\delta$, because it arises when going from scale $r_n$ to $r\in (r_{n+1},r_n)$.

The proof goes through using the same general scheme as in Section \ref{sec.morrey}. The main difficulty is that the bound \eqref{e.epmorreyagen} does not imply $a\in L^\frac{2}{1-\frac3p}(-1,0;L^p(B_1(0)))$. Hence estimates on the term 
\begin{equation*}
I_4=2\int\limits_{Q_\frac12(\bar x,t)}|a||v||\nabla v||\phi_n|dxds
\end{equation*}
carried out in Section \ref{sec.morrey} do not work as such any longer. One possible way out is to use that in our application of Theorem \ref{theo.morrey} to Theorem \ref{theo.main}, we have smallness of the local energy of the perturbation $v$ in short time. This is estimate \eqref{Besovdecayinitialtime}. This smallness is expressed in the assumption \eqref{e.locenE}, which allows to remove the singularity due to \eqref{e.epmorreyagen}. Consequently, there are two main modifications to the argument in Section \ref{sec.morrey}. The first modification is on the bounds \eqref{e.ak} and \eqref{e.bk} which are iterated. The second modification is on Lemma \ref{lem.pressure} for the pressure.

Let $(\bar x,t)\in \bar Q_{1/2}(0,0)$ be fixed for the rest of this section. For all $n\in\N$, we let $r_n:=2^{-n}$. Our aim is to propagate for $k\geq 2$ the following three bounds
\begin{align}
\tag{$A_k'$}\label{e.akp}
\frac1{r_k^2}\int\limits_{t-r_k^2}^s\int\limits_{B_{r_k}(\bar x)}|v|^3dxd\hat s\leq\ &\frac12(s-t_0)_+^{\frac32\eta'}\ep_*^\frac23r_k^{3-\delta},\quad t-r_k^2<s<t,\\
\tag{$A_k''$}\label{e.akpp}\frac1{r_k^{\frac{1+\delta}2}}\int\limits_{t-r_k^2}^s\int\limits_{B_{r_k}(\bar x)}|q-(q)_{r_k}(\hat s)|^\frac32dxd\hat s\leq\ &\frac12(s-t_0)_+^{\frac34\eta'}\ep_*^\frac23r_k^{3-\delta},\quad t-r_k^2<s<t,
\end{align}
and 
\begin{align}
\tag{$B_k'$}\label{e.bkp}
\int\limits_{B_{r_k}(\bar x)}|v(x,s)|^2dx+\int\limits_{t-r_k^2}^s\int\limits_{B_{r_k}(\bar x)}|\nabla v|^2dxds\leq\ & C_B(s-t_0)_+^{\eta'}\ep_*^\frac23r_k^{3-\frac23\delta},\quad t-r_k^2<s<t,
\end{align}
where 
\begin{equation*}
(q)_{r_k}(s):=\intbar_{B_{r_k}(\bar x)}q(x,s)dx,
\end{equation*}
for $\eta'=\frac\eta6\in(0,\frac16)$ and constants $\ep_*(\delta,\eta,E),\, C_B(\delta,\eta)\in (0,\infty)$ to be chosen. Notice that the power $\frac34\eta'$ in \eqref{e.akpp} is worse than the corresponding power in \eqref{e.akp}. This fact appears in Step 4 of the proof of Theorem \ref{theo.morreygen}. It is due to the fourth term in the right hand side of \eqref{e.estpressbis} below.

We also need the following modification of Lemma \ref{lem.pressure}.

\begin{lemma}[Pressure estimate]\label{lem.pressurebis}
There exists a constant $C_2'\in(0,\infty)$ such that for all $\rho\in(0,\infty)$, for all $a$ such that\begin{equation}\label{e.defMa}
M_a:=\sup_{s\in(-1,0)}|s-t_0|^{\frac12}\|a(\cdot,s)\|_{L^\infty(B_1(0))}<\infty
\end{equation}
for all weak solution $q\in L^\frac32(Q_\rho(0,0))$ to 
\begin{equation*}
-\Delta q=\nabla\cdot\nabla\cdot(v\otimes v)+\nabla\cdot\nabla\cdot(a\otimes v)+\nabla\cdot\nabla\cdot(v\otimes a)\quad\mbox{in}\quad Q_\rho(0,0),
\end{equation*}
we have 
\begin{align}
\label{e.estpressbis}
\begin{split}
&r^{-\frac{1+\delta}2}\int\limits_{-r^2}^s\int\limits_{B_{r}(0)}|q-(q)_r(\hat s)|^\frac32dxd\hat s\\
\leq\ & C_2'r^{-\frac{1+\delta}2}\int\limits_{-r^2}^s\int\limits_{B_{2r}(0)}|v|^3dxd\hat s+C_2'r^{\frac34-\frac{\delta}2}M_a^\frac32\Bigg(\int\limits_{-r^2}^s\frac1{|\hat s-t_0|}\int\limits_{B_{2r}(0)}|v(x,\hat s)|^2dxd\hat s\Bigg)^\frac34\\
&+C_2'r^{6-\frac\delta2}\Bigg(\sup_{-r^2<\hat s<s}\int\limits_{2r<|x|<\rho}\frac{|v(x,\hat s)|^2}{|x|^4}dx\Bigg)^\frac32\\
&+C_2'r^{4-\frac\delta2}M_a^\frac32\int\limits_{-r^2}^s\frac1{|\hat s-t_0|^\frac34}\Bigg(\int\limits_{2r<|x|<\rho}\frac{|v(x,\hat s)|}{|x|^4}dx\Bigg)^\frac32d\hat s\\
&+C_2'r^{4-\frac\delta2}\rho^{-\frac92}\int\limits_{-r^2}^s\int\limits_{B_\rho(0)}|v|^3+|q|^\frac32dxd\hat s\\
&+C_2'r^{4-\frac\delta2}\rho^{-\frac{15}{4}}M_a^\frac32\int\limits_{-r^2}^s\frac1{|\hat s-t_0|^\frac34}\Bigg(\int\limits_{B_\rho(0)\setminus B_{\frac\rho2}(0)}|v(x,\hat s)|^2dx\Bigg)^\frac3{4}d\hat s,
\end{split}
\end{align}
for all $s\in (-r^2,0)$, for all $0<r\leq\rho/2$.
\end{lemma}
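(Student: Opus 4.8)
\textbf{Proof strategy for Lemma \ref{lem.pressurebis}.} The plan is to adapt the proof of Lemma \ref{lem.pressure} to the situation where the drift $a$ is only controlled in $L^\infty_x$ with a weight in time, rather than globally in $L^5_{t,x}$. I would follow the decomposition of the pressure from Lemma \ref{pressure localisation}: take a cut-off $\varphi$ equal to $1$ on $B_{3\rho/4}(0)$, supported in $B_\rho(0)$, with $\rho|\nabla\varphi|\leq K_2$ and $\rho^2|\nabla^2\varphi|\leq K_2'$, and write $\varphi q=q_{1,1}+q_{1,2}+q_{1,3}+q_{2,j}+q_{3,j}+q_4+q_5$ exactly as in the proof of Lemma \ref{lem.pressure}. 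The terms not involving $a$ (i.e.\ $q_{1,1}$ with $v\otimes v$, $q_{2,1}$, $q_{3,1}$, $q_4$, $q_5$) are treated verbatim as in \cite{RRS16}, producing the first, third and fifth terms on the right-hand side of \eqref{e.estpressbis}. So the real work is in re-estimating the three terms carrying a factor of $a$, using $\|a(\cdot,\hat s)\|_{L^\infty(B_1(0))}\leq M_a|\hat s-t_0|^{-1/2}$ in place of the $L^5$ bound.

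For the local part $q_{1,1,loc}$ of the $a\otimes v$ (and $v\otimes a$) term, I would apply Calder\'on--Zygmund estimates to bound $\|q_{1,1,loc}(\cdot,\hat s)-(q_{1,1,loc})_r(\hat s)\|_{L^{3/2}(B_r(0))}$ by $C\|a(\cdot,\hat s)v(\cdot,\hat s)\|_{L^{3/2}(B_{2r}(0))}\leq C M_a|\hat s-t_0|^{-1/2}\|v(\cdot,\hat s)\|_{L^{3/2}(B_{2r}(0))}$, then use H\"older in $x$ (with the volume factor $r^{3\cdot(2/3-2/3)}$, i.e.\ $r$ to an appropriate power) to pass to $\|v(\cdot,\hat s)\|_{L^2(B_{2r}(0))}$, and finally integrate in $\hat s$ with H\"older in time against the singular weight $|\hat s-t_0|^{-1/2}$. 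Tracking the powers of $r$ and the resulting exponent $3/4$ on the time integral gives the second term on the right-hand side of \eqref{e.estpressbis}, namely $C_2' r^{3/4-\delta/2}M_a^{3/2}\big(\int |\hat s-t_0|^{-1}\int_{B_{2r}}|v|^2\big)^{3/4}$. For the nonlocal part $q_{1,1,nonloc}$ I would estimate its gradient pointwise on $B_r(0)$ by $C\int_{B_\rho\setminus B_{2r}}\frac{|a||v|}{|y|^4}dy\leq CM_a|\hat s-t_0|^{-1/2}\int_{B_\rho\setminus B_{2r}}\frac{|v|}{|y|^4}dy$, convert this to an $L^{3/2}(B_r)$ bound via a factor $r^3$, raise to the power $3/2$, and integrate in time against $|\hat s-t_0|^{-3/4}$; this yields the fourth term. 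Finally, the terms $q_{2,j},q_{3,j}$ for $j=2,3$ (boundary-layer terms at scale $\rho$) are handled by the same pointwise gradient bound on $\nabla q_{i,j}$ over the annulus $B_\rho\setminus B_{\rho/2}$, using $\|a(\cdot,\hat s)\|_{L^\infty}\leq M_a|\hat s-t_0|^{-1/2}$ and H\"older in $x$ to produce $\|v(\cdot,\hat s)\|_{L^2(B_\rho\setminus B_{\rho/2})}$; integrating the $3/2$-power in time against $|\hat s-t_0|^{-3/4}$ gives the sixth and last term.

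The main obstacle, as usual in this circle of ideas, is bookkeeping rather than conceptual: one must choose the H\"older exponents in the time integrals so that the weight $|\hat s-t_0|^{-\theta}$ (with $\theta=1/2$ coming from a single factor of $a$) is integrable, which forces the exponent $3/4$ (one must not apply H\"older so as to produce $|\hat s-t_0|^{-1}$ under a bare time integral), and one must carefully verify that all the powers of $r$ match $r^{3/4-\delta/2}$, $r^{4-\delta/2}$ etc.\ as claimed, in particular that the balance of $r$-scaling against $\rho$-scaling in the nonlocal and boundary terms is correct. I expect no genuine difficulty beyond this, since the structure of the argument is identical to Lemma \ref{lem.pressure}; the only new input is the replacement of the $L^5_{t,x}$ H\"older step by a weighted $L^\infty_x$ times $L^2_x$ step followed by a weighted time integration, and the observation that $|s-t_0|^{1/2}\|a\|_{L^\infty}$ being small and bounded is exactly what makes the time integrals converge.
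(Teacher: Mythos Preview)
Your proposal is correct and follows exactly the approach the paper intends: the paper presents Lemma \ref{lem.pressurebis} as a direct modification of Lemma \ref{lem.pressure} without giving a separate proof, and your plan---keep the same cut-off decomposition $\varphi q=\sum q_{i,j}$, leave the $v\otimes v$ terms untouched, and for each $a$-term replace the $L^5_{t,x}$ H\"older step by $\|a(\cdot,\hat s)\|_{L^\infty}\leq M_a|\hat s-t_0|^{-1/2}$ followed by H\"older in $x$ (to $L^2$) and then in time---is precisely that modification. Your bookkeeping sketch for the second, fourth and sixth terms is accurate (in particular the extra $r^{1/2}$ from the time-H\"older in the local piece and the $\rho^{-5/2}$ in the boundary piece), so there is nothing to add.
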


\begin{proof}[Sketch of the proof of Theorem \ref{theo.morreygen}]
\emph{In the whole proof, we define $M_a$ as in \eqref{e.defMa}.} Notice that by assumption \eqref{e.epmorreyagen}, $M_a\leq\ep_*$. 
Let us sketch the main differences with respect to the proof of Theorem \ref{theo.morrey} in Section \ref{sec.morrey}. We focus on the case when $(\bar x,t)=(0,0)$, but the argument for general $(\bar x,t)\in Q_{\frac12}(0,0)$ follows along the same lines.

\noindent\underline{\bf Step 1: \eqref{e.akp} and \eqref{e.akpp} for $k=2$.} This step is slightly different from the analogous step in Section \ref{sec.morrey}. Indeed, assumption \eqref{e.epmorreygen} does not imply any rate of decay near the time $t_0$. Therefore, we have to combine \eqref{e.epmorreygen}, to get the smallness with respect to $\ep_*$, with \eqref{e.locenEgen} or \eqref{e.locenPgen}, to get the decay rate in time. To do so, one has to give up a bit of the power $\eta$. Indeed, instead of $(s-t_0)_+^\eta$, the decay rate in \eqref{e.akp} is $(s-t_0)_+^\frac\eta6$. We have
\begin{align*}
\frac1{r_2^2}\int\limits_{-r_2^2}^s\int\limits_{B_{r_2}(0)}|v|^3dxd\hat s\leq\ &\frac1{r_2^2}\Bigg(\int\limits_{-r_2^2}^s\int\limits_{B_{r_2}(0)}|v|^3dxd\hat s\Bigg)^\frac16\Bigg(\int\limits_{-r_2^2}^s\int\limits_{B_{r_2}(0)}|v|^3dxd\hat s\Bigg)^\frac56\\
\leq\ &C\big(E^\frac32(s-t_0)_+^{\frac32\eta}\big)^\frac16\ep_*^\frac56\\
\leq\ &C(\delta)E^\frac14(s-t_0)_+^{\frac32\eta'}\ep_*^\frac56r_2^{3-\delta}.
\end{align*}
The estimate for the pressure using \eqref{e.locenPgen} is similar.

\noindent\underline{\bf Step 2: \eqref{e.akp} and \eqref{e.akpp} for $k=2$ imply \eqref{e.bkp} for $k=2$.} We do not give the details for this step. Similar calculations are done below in Step 3. Notice that the terms $I_4$ and $I_5$ have to be estimated using \eqref{e.locenEgen} and \eqref{e.akp} for $k=2$. The smallness of $a$ given by \eqref{e.epmorreyagen} enables to absorb some constants by choosing $\ep_*$ small enough.

\noindent\underline{\bf Step 3: \eqref{e.akp}, \eqref{e.akpp} and \eqref{e.bk} for $2\leq k\leq n$ imply \eqref{e.bkp} for $k=n+1$.} Thanks to the local energy inequality \eqref{e.lei}, we have for all $s\in(-r_n^2,0)$,
\begin{align*}
&C_1^{-1}r_n^{-1}\int\limits_{B_{r_n}(0)}|v(x,s)|^2dx+C_1^{-1}r_n^{-1}\int\limits_{-r_n^2}^s\int\limits_{B_{r_n}(0)}|\nabla v|^2dxds\\
\leq\ &
C_1r_n^2\int\limits_{-(\frac12)^2}^s\int\limits_{B_{1/2}(0)}|v|^2dxd\hat s+\int\limits_{-(\frac12)^2}^s\int\limits_{B_{1/2}(0)}|v|^3|\nabla\phi_n|dxd\hat s\\
&+2\left|\int\limits_{-(\frac12)^2}^s\int\limits_{B_{1/2}(0)}v\cdot\nabla\phi_n qdxd\hat s\right|+2\int\limits_{-(\frac12)^2}^s\int\limits_{B_{1/2}(0)}|a||v||\nabla v||\phi_n|dxd\hat s\\
&+\int\limits_{-(\frac12)^2}^s\int\limits_{B_{1/2}(0)}|a||v|^2|\nabla\phi_n|dxd\hat s\\
=\ &I_1'+\ldots\, I_5'.
\end{align*}
For $I_1'$, we have
\begin{align*}
|I_1'|\leq\ & C_1(s-t_0)^\frac13_+\ep_*^\frac23r_n^{2-\frac23\delta}.
\end{align*}
The term $I_2'$ is immediate following the estimates of Section \ref{sec.morrey}. Let us write some details for $I_3'$. We have
\begin{align*}
|I_3'|\leq\  &Cr_n^2\sum_{k=2}^nr_{k-1}^{-4}\Bigg(\int\limits_{-r_{k-1}^2}^s\int\limits_{B_{r_{k-1}}(0)}|q-(q)_{r_{k-1}}(\hat s)|^\frac32dxd\hat s\Bigg)^\frac23\Bigg(\int\limits_{-r_{k-1}^2}^s\int\limits_{B_{r_{k-1}}(0)}|v|^3dxd\hat s\Bigg)^\frac13\\
&+Cr_n^{-2}\Bigg(\int\limits_{-r_{n}^2}^s\int\limits_{B_{r_{n}}(0)}|q-(q)_{r_{n}}(\hat s)|^\frac32dxd\hat s\Bigg)^\frac23\Bigg(\int\limits_{-r_{n}^2}^s\int\limits_{B_{r_{n}}(0)}|v|^3dxd\hat s\Bigg)^\frac13\\
\leq\ &C(\delta)(\ep_*^\frac23)^\frac23r_n^{-2}\big(r_n^{\frac72-\frac\delta2}(s-t_0)_+^{\frac34\eta'}\big)^\frac23(\ep_*^\frac23)^\frac13\big(r_n^{5-\delta}(s-t_0)_+^{\frac32\eta'}\big)^\frac13=C(\delta)(s-t_0)_+^{\eta'}\ep_*^\frac23r_n^{2-\frac23\delta}.
\end{align*}
Some changes are necessary to deal with $I_4'$ and $I_5'$. For $I_4'$, 
\begin{align*}
|I_4'|\leq\ &Cr_n^2\sum_{k=1}^nr_{k+1}^{-3}\int\limits_{-r_k^2}^s\int\limits_{B_{r_k}(0)}|a||v||\nabla v|dxd\hat s\\
\leq\ &Cr_n^2M_a\sum_{k=1}^nr_{k+1}^{-3}\int\limits_{-r_k^2}^s\frac1{|\hat s-t_0|^\frac12}\Bigg(\int\limits_{B_{r_k}(0)}|v(\cdot,\hat s)|^2dx\Bigg)^\frac12\Bigg(\int\limits_{B_{r_k}(0)}|\nabla v(\cdot,\hat s)|^2dx\Bigg)^\frac12d\hat s\\
\leq\ &Cr_n^2M_a\ep_*^\frac13\sum_{k=1}^nr_{k+1}^{-3}r_k^{\frac32-\frac\delta 3}\Bigg(\int\limits_{t_0}^s\frac1{(\hat s-t_0)^{1-\eta'}}d\hat s\Bigg)^\frac12\Bigg(\int\limits_{-r_k^2}^s\int\limits_{B_{r_k}(0)}|\nabla v(\cdot,\hat s)|^2dxd\hat s\Bigg)^\frac12\\
\leq\ & C(\eta',\delta)(s-t_0)^{\eta'}\ep_*^{1+\frac23}r_n^{2-\frac23\delta},
\end{align*}
using the fact that $M_a\leq\ep_*$ by assumption. Finally, 
\begin{align*}
|I_5'|\leq\ &Cr_n^2\sum_{k=1}^nr_{k+1}^{-4}\int\limits_{-r_k^2}^s\int\limits_{B_{r_k}(0)}|a||v|^2dxd\hat s\\
\leq\ &Cr_n^2M_a\sum_{k=1}^nr_{k+1}^{-4}\int\limits_{-r_k^2}^s\frac1{|\hat s-t_0|^\frac12}\int\limits_{B_{r_k}(0)}|v(\cdot,\hat s)|^2dxd\hat s\\
\leq\ &C(\eta',\delta)(s-t_0)_+^{\frac12+\eta'}\ep_*^{1+\frac23}r_n^{2-\frac23\delta}.
\end{align*}
This concludes Step 3.

\noindent\underline{\bf Step 4: \eqref{e.bkp} for $2\leq k\leq n+1$ implies \eqref{e.akp} and \eqref{e.akpp} for $k=n+1$.} We first prove the estimate \eqref{e.akp}. We have
\begin{align*}
\int\limits_{-r_{n+1}^2}^s\int\limits_{B_{r_{n+1}}(0)}|v|^3dxd\hat s\leq\ &r_{n+1}^{-\frac32}\int\limits_{-r_{n+1}^2}^s\Bigg(\int\limits_{B_{r_{n+1}}(0)}|v|^2dx\Bigg)^\frac32d\hat s\\
&+\int\limits_{-r_{n+1}^2}^s\Bigg(\int\limits_{B_{r_{n+1}}(0)}|v|^2dx\Bigg)^\frac34\Bigg(\int\limits_{B_{r_{n+1}}(0)}|\nabla v|^2dx\Bigg)^\frac34d\hat s\\
\leq\ &r_{n+1}^{-\frac32}\int_{-r_{n+1}^2}^s\big(C_B\ep_*^\frac23(\hat s-t_0)_+^{\eta'}r_{n+1}^{3-\frac23\delta}\big)^\frac32d\hat s\\
&+\Bigg(\int\limits_{-r_{n+1}^2}^s\big(C_B\ep_*^\frac23(\hat s-t_0)_+^{\eta'} r_{n+1}^{3-\frac23\delta}\big)^3d\hat s\Bigg)^\frac14\Bigg(\int\limits_{-r_{n+1}^2}^s\int\limits_{B_{r_{n+1}}(0)}|\nabla v|^2dxd\hat s\Bigg)^\frac34\\
\leq\ &2C_B^\frac32\ep_*(s-t_0)^{\frac32\eta'}r_{n+1}^{5-\delta},
\end{align*}
which proves \eqref{e.akp} for $k=n+1$ by choosing $\ep_*$ sufficiently small. 
Let us prove the estimate for the pressure using the bound of Lemma \ref{lem.pressurebis}. We take $r=r_{n+1}$ and $\rho=\frac14$. We have 
\begin{align*}
\begin{split}
&r_{n+1}^{-\frac{1+\delta}2}\int\limits_{-r_{n+1}^2}^s\int\limits_{B_{r_{n+1}}(0)}|q-(q)_{r_{n+1}}(\hat s)|^\frac32dxd\hat s\\
\leq\ & C_2'r_{n+1}^{-\frac{1+\delta}2}\int\limits_{-r_{n+1}^2}^s\int\limits_{B_{r_{n}}(0)}|v|^3dxd\hat s+C_2'
r_{n+1}^{\frac34-\frac{\delta}2}M_a^\frac32\Bigg(\int\limits_{-r_{n+1}^2}^s\frac1{|\hat s-t_0|}\int\limits_{B_{{r_{n}}}(0)}|v(x,\hat s)|^2dxd\hat s\Bigg)^\frac34\\
&+C_2'r_{n+1}^{6-\frac\delta2}\Bigg(\sup_{-r_{n+1}^2<\hat s<s}\int\limits_{{r_{n}}<|x|<\frac14}\frac{|v(x,\hat s)|^2}{|x|^4}dx\Bigg)^\frac32\\
&+C_2'r_{n+1}^{4-\frac\delta2}M_a^\frac32\int\limits_{-r_{n+1}^2}^s\frac1{|\hat s-t_0|^\frac34}\Bigg(\int\limits_{r_{n}<|x|<\frac14}\frac{|v(x,\hat s)|}{|x|^4}dx\Bigg)^\frac32d\hat s\\
&+2^9C_2'r_{n+1}^{4-\frac\delta2}\int\limits_{-r_{n+1}^2}^s\int\limits_{B_{\frac14}(0)}|v|^3+|q|^\frac32dxd\hat s\\
&+2^{\frac{15}{2}}C_2'r_{n+1}^{4-\frac\delta2}M_a^\frac32\int\limits_{-r_{n+1}^2}^s\frac1{|\hat s-t_0|^\frac34}\Bigg(\int\limits_{B_{\frac14}(0)\setminus B_{\frac18}(0)}|v(x,\hat s)|^2dx\Bigg)^\frac3{4}d\hat s\\
=\ &J_1'+\ldots\ J_6',
\end{split}
\end{align*}
for all $s\in (-r_{n+1}^2,0)$. We concentrate on the estimates for $J_2'$, $J_3'$ and $J_4'$. The estimate of $J_1'$ is similar to the one just done above. The estimates of $J_5'$ and $J_6'$ do not pose any additional difficulty. For $J_2'$, we have
\begin{align*}
|J_2'|\leq\ &C_2'r_{n+1}^{\frac34-\frac{\delta}2}\ep_*^\frac32C_B^\frac34\ep_*^\frac12r_n^{\frac94-\frac\delta2}\Bigg(\int\limits_{t_0}^s\frac1{(\hat s-t_0)^{1-\eta'}}d\hat s\Bigg)^\frac34\leq C(\eta')C_B^\frac34\ep_*^2(\hat s-t_0)_+^{\frac34\eta'}r_{n+1}^{3-\delta}.
\end{align*} 
For $J_3'$, the estimate is very close to the bound for $J_3$ in Section \ref{sec.morrey}. We also split the integral into rings. This yields
\begin{align*}
|J_3'|\leq\ &C_2'r_{n+1}^{6-\frac\delta2}\ep_*C_B^\frac23C(\delta)(s-t_0)_+^{\frac32\eta'}r_{n+1}^{-\frac32-\delta}\leq C_2'C_B^\frac23C(\delta)\ep_*(s-t_0)_+^{\frac32\eta'}r_{n+1}^{3-\delta}.
\end{align*}
Finally for $J_4'$ splitting again into rings leads to
\begin{align*}
|J_4'|\leq\ & C_2'r_{n+1}^{4-\frac\delta2}\ep_*^\frac32C_B^\frac34C(\delta)\ep_*^\frac12r_{n+1}^{-\frac32-\frac\delta2}\int\limits_{-r_{n+1}^2}^s\frac{(\hat s-t_0)_+^{\frac34\eta'}}{|s-t_0|^\frac34}d\hat s\\
\leq\ &C_2'r_{n+1}^{4-\frac\delta2}\ep_*^\frac32C_B^\frac34C(\delta)\ep_*^\frac12r_{n+1}^{-\frac32-\frac\delta2}(s-t_0)_+^{\frac34\eta'}r_{n+1}^\frac12\\
\leq\ &C_2'C_B^\frac34C(\delta)\ep_*(s-t_0)_+^{\frac34\eta'}r_n^{3-\delta}.
\end{align*} 
Hence the estimate \eqref{e.akpp} follows for $\ep_*$ sufficiently small. 
\end{proof}

\subsection*{Funding and conflict of interest.} 
The second author is partially supported by the project BORDS (ANR-16-CE40-0027-01) operated by the French National Research Agency (ANR). 
The second author also acknow\-ledges financial support from the IDEX of the University of Bordeaux for the BOLIDE project.  The authors declare that they have no conflict of interest. 

\subsection*{Acknowledgement}
Both authors warmly thank the OxPDE centre, where this work started. The second author would also like to thank Yasunori Maekawa and Hideyuki Miura for sti\-mulating discussions about concentration for blowing-up solutions of the Navier-Stokes equations.

\small
\bibliographystyle{abbrv}
\bibliography{concentration.bib}

\begin{thebibliography}{10}

\bibitem{Albritton18}
D.~Albritton.
\newblock Blow-up criteria for the {N}avier-{S}tokes equations in non-endpoint
  critical {B}esov spaces.
\newblock {\em Anal. PDE}, 11(6):1415--1456, 2018.

\bibitem{AlbrittonBarkerBesov2018}
D.~Albritton and T.~Barker.
\newblock Global weak {B}esov solutions of the {N}avier-{S}tokes equations and
  applications.
\newblock {\em Archive for Rational Mechanics and Analysis}, Oct 2018.

\bibitem{AlbrittonBarker2018local}
D.~Albritton and T.~Barker.
\newblock Localised necessary conditions for singularity formation in the
  {N}avier-{S}tokes equations with curved boundary.
\newblock {\em arXiv preprint arXiv:1811.00507}, 2018.

\bibitem{barker2017existence}
T.~Barker.
\newblock Existence and weak* stability for the {N}avier-{S}tokes system with
  initial values in critical {B}esov spaces.
\newblock {\em arXiv preprint arXiv:1703.06841}, 2017.

\bibitem{Barker18}
T.~Barker.
\newblock Uniqueness results for weak {L}eray-{H}opf solutions of the
  {N}avier-{S}tokes system with initial values in critical spaces.
\newblock {\em J. Math. Fluid Mech.}, 20(1):133--160, 2018.

\bibitem{BS17}
T.~Barker and G.~Seregin.
\newblock A necessary condition of potential blowup for the {N}avier-{S}tokes
  system in half-space.
\newblock {\em Math. Ann.}, 369(3-4):1327--1352, 2017.

\bibitem{BSS18}
T.~Barker, G.~Seregin, and V.~\v{S}ver\'{a}k.
\newblock On stability of weak {N}avier-{S}tokes solutions with large
  {$L^{3,\infty}$ˆ}ž initial data.
\newblock {\em Communications in Partial Differential Equations}, 0(0):1--24,
  2018.

\bibitem{buckmaster2017nonuniqueness}
T.~Buckmaster and V.~Vicol.
\newblock Nonuniqueness of weak solutions to the {N}avier-{S}tokes equation.
\newblock {\em arXiv preprint arXiv:1709.10033}, 2017.

\bibitem{CKN82}
L.~Caffarelli, R.~Kohn, and L.~Nirenberg.
\newblock Partial regularity of suitable weak solutions of the
  {N}avier-{S}tokes equations.
\newblock {\em Comm. Pure Appl. Math.}, 35(6):771--831, 1982.

\bibitem{calderon1990existence}
C.~P. Calder{\'o}n.
\newblock Existence of weak solutions for the {N}avier-{S}tokes equations with
  initial data in {$L^p$}.
\newblock {\em Transactions of the American Mathematical Society},
  318(1):179--200, 1990.

\bibitem{dong2014boundary}
H.~Dong and X.~Gu.
\newblock Boundary partial regularity for the high dimensional
  {N}avier-{S}tokes equations.
\newblock {\em Journal of Functional Analysis}, 267(8):2606--2637, 2014.

\bibitem{dong2014partial}
H.~Dong and X.~Gu.
\newblock Partial regularity of solutions to the four-dimensional
  {N}avier-{S}tokes equations.
\newblock {\em Dynamics of Partial Differential Equations}, 11(1):53--69, 2014.

\bibitem{DW18}
H.~{Dong} and K.~{Wang}.
\newblock {Interior and boundary regularity for the {N}avier-{S}tokes equations
  in the critical {L}ebesgue spaces}.
\newblock {\em ArXiv e-prints}, page arXiv:1809.06712, Sept. 2018.

\bibitem{dubois}
S.~Dubois.
\newblock Uniqueness for some {L}eray-{H}opf solutions to the {N}avier-{S}tokes
  equations.
\newblock {\em Journal of Differential Equations}, 189(1):99 -- 147, 2003.

\bibitem{Galdibook}
G.~P. Galdi.
\newblock {\em An introduction to the mathematical theory of the
  {N}avier-{S}tokes equations}.
\newblock Springer Monographs in Mathematics. Springer, New York, second
  edition, 2011.
\newblock Steady-state problems.

\bibitem{GKP}
I.~Gallagher, G.~S. Koch, and F.~Planchon.
\newblock Blow-up of critical {B}esov norms at a potential {N}avier-{S}tokes
  singularity.
\newblock {\em Comm. Math. Phys.}, 343(1):39--82, 2016.

\bibitem{Giga86}
Y.~Giga.
\newblock Solutions for semilinear parabolic equations in {$L^p$} and
  regularity of weak solutions of the {N}avier-{S}tokes system.
\newblock {\em J. Differential Equations}, 62(2):186--212, 1986.

\bibitem{Giga13}
Y.~Giga.
\newblock A remark on a {L}iouville problem with boundary for the {S}tokes and
  the {N}avier-{S}tokes equations.
\newblock {\em Discrete Contin. Dyn. Syst. Ser. S}, 6(5):1277--1289, 2013.

\bibitem{guillod2017numerical}
J.~Guillod and V.~{\v{S}}ver{\'a}k.
\newblock Numerical investigations of non-uniqueness for the {N}avier-{S}tokes
  initial value problem in borderline spaces.
\newblock {\em arXiv preprint arXiv:1704.00560}, 2017.

\bibitem{HK06}
T.~Hmidi and S.~Keraani.
\newblock Remarks on the blowup for the {$L^2$}-critical nonlinear
  {S}chr\"{o}dinger equations.
\newblock {\em SIAM J. Math. Anal.}, 38(4):1035--1047, 2006.

\bibitem{HR07}
J.~Holmer and S.~Roudenko.
\newblock On blow-up solutions to the 3{D} cubic nonlinear {S}chr\"{o}dinger
  equation.
\newblock {\em Appl. Math. Res. Express. AMRX}, (1):Art. ID abm004, 31, 2007.

\bibitem{hunt}
R.~A. Hunt.
\newblock On {$L(p,\,q)$} spaces.
\newblock {\em Enseignement Math. (2)}, 12:249--276, 1966.

\bibitem{JS13}
H.~Jia and V.~\v{S}ver\'{a}k.
\newblock Minimal {$L^3$}-initial data for potential {N}avier-{S}tokes
  singularities.
\newblock {\em SIAM J. Math. Anal.}, 45(3):1448--1459, 2013.

\bibitem{JS14}
H.~Jia and V.~\v{S}ver\'{a}k.
\newblock Local-in-space estimates near initial time for weak solutions of the
  {N}avier-{S}tokes equations and forward self-similar solutions.
\newblock {\em Invent. Math.}, 196(1):233--265, 2014.

\bibitem{JiaSvernonunique}
H.~Jia and V.~\v{S}ver\'{a}k.
\newblock Are the incompressible 3d {N}avier-{S}tokes equations locally
  ill-posed in the natural energy space?
\newblock {\em J. Funct. Anal.}, 268(12):3734--3766, 2015.

\bibitem{KMT18}
K.~{Kang}, H.~{Miura}, and T.-P. {Tsai}.
\newblock {Short time regularity of Navier-Stokes flows with locally $L^3$
  initial data and applications}.
\newblock {\em arXiv e-prints}, page arXiv:1812.10509, Dec. 2018.

\bibitem{Kato}
T.~Kato.
\newblock Strong {$L^{p}$}-solutions of the {N}avier-{S}tokes equation in
  {${\bf R}^{m}$}, with applications to weak solutions.
\newblock {\em Math. Z.}, 187(4):471--480, 1984.

\bibitem{KS07}
N.~Kikuchi and G.~Seregin.
\newblock Weak solutions to the {C}auchy problem for the {N}avier-{S}tokes
  equations satisfying the local energy inequality.
\newblock In {\em Nonlinear equations and spectral theory}, volume 220 of {\em
  Amer. Math. Soc. Transl. Ser. 2}, pages 141--164. Amer. Math. Soc.,
  Providence, RI, 2007.

\bibitem{KochTataru}
H.~Koch and D.~Tataru.
\newblock Well-posedness for the {N}avier-{S}tokes equations.
\newblock {\em Adv. Math.}, 157(1):22--35, 2001.

\bibitem{Kukavicamorrey}
I.~Kukavica.
\newblock On partial regularity for the {N}avier-{S}tokes equations.
\newblock {\em Discrete Contin. Dyn. Syst.}, 21(3):717--728, 2008.

\bibitem{Kukavicamorreyforce}
I.~Kukavica.
\newblock Partial regularity for the {N}avier-{S}tokes equations with a force
  in a {M}orrey space.
\newblock {\em J. Math. Anal. Appl.}, 374(2):573--584, 2011.

\bibitem{LadySer}
O.~A. Ladyzhenskaya and G.~A. Seregin.
\newblock On partial regularity of suitable weak solutions to the
  three-dimensional {N}avier-{S}tokes equations.
\newblock {\em J. Math. Fluid Mech.}, 1(4):356--387, 1999.

\bibitem{LR02}
P.~G. Lemari\'{e}-Rieusset.
\newblock {\em Recent developments in the {N}avier-{S}tokes problem}, volume
  431 of {\em Chapman \& Hall/CRC Research Notes in Mathematics}.
\newblock Chapman \& Hall/CRC, Boca Raton, FL, 2002.

\bibitem{Leray}
J.~Leray.
\newblock Sur le mouvement d'un liquide visqueux emplissant l'espace.
\newblock {\em Acta Math.}, 63(1):193--248, 1934.

\bibitem{LOW18}
K.~Li, T.~Ozawa, and B.~Wang.
\newblock Dynamical behavior for the solutions of the {N}avier-{S}tokes
  equation.
\newblock {\em Commun. Pure Appl. Anal.}, 17(4):1511--1560, 2018.

\bibitem{LiWang2018blowup}
K.~Li and B.~Wang.
\newblock Blowup criterion for {N}avier-{S}tokes equation in critical {B}esov
  space with spatial dimensions $ d\geq 4$.
\newblock {\em arXiv preprint arXiv:1803.04076}, 2018.

\bibitem{Lin}
F.~Lin.
\newblock A new proof of the {C}affarelli-{K}ohn-{N}irenberg theorem.
\newblock {\em Comm. Pure Appl. Math.}, 51(3):241--257, 1998.

\bibitem{MMP17a}
Y.~{Maekawa}, H.~{Miura}, and C.~{Prange}.
\newblock {Estimates for the {N}avier-{S}tokes equations in the half-space for
  non localized data}.
\newblock {\em ArXiv e-prints}, Nov. 2017.

\bibitem{maekawa2006}
Y.~Maekawa and Y.~Terasawa.
\newblock The navier-stokes equations with initial data in uniformly local
  $l^p$ spaces.
\newblock {\em Differential Integral Equations}, 19(4):369--400, 2006.

\bibitem{MT90}
F.~Merle and Y.~Tsutsumi.
\newblock {$L^2$} concentration of blow-up solutions for the nonlinear
  {S}chr\"{o}dinger equation with critical power nonlinearity.
\newblock {\em J. Differential Equations}, 84(2):205--214, 1990.

\bibitem{meyer}
Y.~Meyer.
\newblock Wavelets, paraproducts, and {N}avier-{S}tokes equations.
\newblock In {\em Current developments in mathematics, 1996 ({C}ambridge,
  {MA})}, pages 105--212. Int. Press, Boston, MA, 1997.

\bibitem{OLeary}
M.~O'Leary.
\newblock Conditions for the local boundedness of solutions of the
  {N}avier-{S}tokes system in three dimensions.
\newblock {\em Comm. Partial Differential Equations}, 28(3-4):617--636, 2003.

\bibitem{O'Neil}
R.~O'€™Neil.
\newblock Convolution operators and $l(p,q)$ spaces.
\newblock {\em Duke Math. J.}, 30(1):129--142, 03 1963.

\bibitem{phuc2015navier}
N.~C. Phuc.
\newblock The {N}avier-{S}tokes equations in nonendpoint borderline {L}orentz
  spaces.
\newblock {\em Journal of Mathematical Fluid Mechanics}, 17(4):741--760, 2015.

\bibitem{Plan96}
F.~Planchon.
\newblock Global strong solutions in {S}obolev or {L}ebesgue spaces to the
  incompressible {N}avier-{S}tokes equations in {${\bf R}^3$}.
\newblock {\em Ann. Inst. H. Poincar\'{e} Anal. Non Lin\'{e}aire},
  13(3):319--336, 1996.

\bibitem{RRS16}
J.~C. Robinson, J.~L. Rodrigo, and W.~Sadowski.
\newblock {\em The three-dimensional {N}avier-{S}tokes equations}, volume 157
  of {\em Cambridge Studies in Advanced Mathematics}.
\newblock Cambridge University Press, Cambridge, 2016.
\newblock Classical theory.

\bibitem{seregin2012certain}
G.~Seregin.
\newblock A certain necessary condition of potential blow up for
  {N}avier-{S}tokes equations.
\newblock {\em Communications in Mathematical Physics}, 312(3):833--845, 2012.

\bibitem{Seregin2018}
G.~Seregin and V.~{\v{S}}ver{\'a}k.
\newblock Regularity criteria for {N}avier-{S}tokes solutions.
\newblock In Y.~Giga and A.~Novotn{\'y}, editors, {\em Handbook of Mathematical
  Analysis in Mechanics of Viscous Fluids}, pages 829--867. Springer
  International Publishing, Cham, 2018.

\bibitem{Ser06}
G.~A. Seregin.
\newblock A new version of the {L}adyzhenskaya-{P}rodi-{S}errin condition.
\newblock {\em Algebra i Analiz}, 18(1):124--143, 2006.

\bibitem{SerZajac}
W.~Zajaczkowski and G.~A. Seregin.
\newblock A sufficient condition of local regularity for the {N}avier-{S}tokes
  equations.
\newblock {\em Zap. Nauchn. Sem. S.-Peterburg. Otdel. Mat. Inst. Steklov.
  (POMI)}, 336(Kraev. Zadachi Mat. Fiz. i Smezh. Vopr. Teor. Funkts.
  37):46--54, 274, 2006.

\end{thebibliography}

\end{document}